\definecolor{shadecolor}{rgb}{0.8,0.8,0.8}
\newcommand{\Ric}{\mbox{Ric}}
\newcommand{\tr}{\mbox{tr}}
\newcommand{\spa}{\mbox{span}}
\newcommand{\Hess}{\mbox{Hess}}
\newcommand{\End}{\mbox{End}}
\newcommand{\grad}{\mbox{grad}}
\newcommand{\Id}{\mbox{Id}}
\newcommand{\TP}{\mbox{TP}}
\newtheorem{theorem}{Theorem}[section]
\newtheorem{proposition}{Proposition}[section]
\newtheorem{corollary}{Corollary}[section]
\newtheorem{lemma}{Lemma}[section]
\newtheorem{Remarks}{Remarks}[section]
\newtheorem{observation}{Remark}[section]
\newtheorem{example}{Example}[section]
\newtheorem{examples}{Examples}[section]
\begin{document}

\title{Submanifolds with constant Moebius curvature and flat normal bundle}
\maketitle
\begin{center}
\author{M. S. R. Antas        and
        R. Tojeiro$^*$}  
        \footnote{Corresponding author}
      \footnote{The first author was supported by FAPESP Grant 2019/04027-7. The second author is partially supported by FAPESP grant 2022/16097-2 and CNPq grant 307016/2021-8.\\
      Data availability statement: Not applicable.}
\end{center}
\date{}

\begin{abstract}
We classify isometric immersions $f:M^{n}\to \mathbb{R}^{n+p}$, $n \geq 5$ and $2p \le n$, with constant Moebius curvature and flat normal bundle. 
\end{abstract}

\noindent \emph{2020 Mathematics Subject Classification:} 53B25, 53C40.\vspace{2ex}

\noindent \emph{Key words and phrases:} {\small {\em Moebius metric, constant Moebius  curvature, flat normal bundle, Moebius reducible submanifolds. }}

\date{}
\maketitle

\section{Introduction}

 In a seminal paper in Moebius Submanifold Geometry, C. P. Wang \cite{Wang} introduced a  Moebius invariant metric $g^*$ on an umbilic-free hypersurface $f\colon M^n \to \mathbb{R}^{n+1}$, called the \emph{Moebius metric},  and a Moebius invariant $2$-form $B$ on $M^n$, the \emph{Moebius second fundamental form} of $f$, and  proved that, for $n\geq 4$, the pair $(g^*,B)$ forms a complete Moebius invariant system which determines the hypersurface up to Moebius transformations (see also Theorem $9.22$ in \cite{DT} for a general Moebius fundamental theorem for submanifolds of arbitrary dimension and codimension). The corresponding conformal Gauss, Codazzi and Ricci equations involve two other important Moebius invariant tensors named the \emph{Blaschke tensor} and the \emph{Moebius form}.
 
   C. P. Wang's work motivated several authors to investigate umbilic-free immersions  whose associated Moebius invariants have a simple structure or particular natural properties (see, among others,  \cite{surfaces-vanishing-moebius-form}, \cite{Rodrigues-Keti},  \cite{isopara-three-principals},  \cite{CMF-surfaces}, \cite{moebius-ricci-curvature}, \cite{Moebius-Laguerre}, \cite{parallel-moebius-second-fund-form} and \cite{CMF-three-principals}).

   Among the relevant classes of  umbilic-free immersions $f:M^{n}\to \mathbb{R}^{n+p}$ is that of submanifolds with  \emph{constant Moebius curvature}, meaning that  $(M^{n}, g^{*})$ has constant sectional curvatures.  A local classification of the umbilic-free  hypersurfaces $f:M^{n}\to \mathbb{R}^{n+1}$, $n \geq 4$, which have constant Moebius curvature was first obtained in \cite{classif-moebius-hyper}, and a somewhat simpler proof  was subsequently given in \cite{deform}. The classification also applies to the case in which $n=3$ and $f$ is assumed to have a principal curvature of multiplicity $2$.
   
   The starting point for achieving such classification is the fact that every umbilic-free hypersurface with constant Moebius curvature is conformally flat. Recall that a Riemannian manifold $(M^{n}, g)$ 
 is \emph{conformally flat} if every point of  $M^{n}$ has an open  neighborhood that is conformal to an open subset of the Euclidean space  $\mathbb{R}^{n}.$  In particular, by a well-known result due to E. Cartan \cite{Cartan}, an umbilic-free {conformally flat hypersurface $f:M^{n}\to \mathbb{R}^{n+1}$,  $n \geq 4$, must have a principal curvature $\lambda$ of multiplicity $n-1$. It is not difficult to show that this implies the Moebius form of $f$ to be closed.  
 
     These facts enabled the 
authors, by means of the method of moving frames, to reduce the problem to proving that a conformally flat Euclidean hypersurface with closed Moebius form must be locally Moebius congruent to either a  cylinder over a curve $\gamma:I \to \mathbb{R}^{2}$, a cylinder over a surface of $\mathbb{R}^3$ which is itself a cone over a curve $\gamma:I \to \mathbb{S}^{2}$, or a rotation hypersurface over a curve $\gamma:I \to \mathbb{R}_{+}^{2}$, where $\mathbb{R}^2_{+}$ is regarded as the Poincar\'e half-plane model of the hyperbolic plane. Then, imposing the sectional curvatures of the Moebius metric to be constant has led to an explicit expression for the curvature of $\gamma$, which was called a \emph{curvature spiral}.
 
    In this article we initiate the investigation of umbilic-free immersions $f:M^{n}\to \mathbb{R}^{n+p}$,  $n\geq 5$, with constant  Moebius curvature and higher codimension. We restrict ourselves in this paper to the case in which the submanifold has flat normal bundle.  

     Our main result is a classification of the umbilic-free imersions $f:M^{n}\to \mathbb{R}^{n+p}$ with constant  Moebius curvature and flat normal bundle for which $n \geq 5$  and $2p \le n$. See Theorem \ref{theorem-main} for the precise statement. As an important first step, for any umbilic-free imersion $f:M^{n}\to \mathbb{R}^{n+p}$ of a conformally flat manifold $M^n$, we determine the intrinsic local structure of $M^n$ endowed with its Moebius metric.   We also classify, with no restrictions on the codimension, the isometric immersions $f:M^{n}\to \mathbb{R}^{n+p}$, $n\geq 3$,  with constant  Moebius curvature and flat normal bundle that have exactly two distinct principal normal vector fields. The latter result already  extends (and substantially simplifies the proofs of) the classifications in both \cite{classif-moebius-hyper}  and \cite{deform} of the umbilic-free hypersurfaces with constant Moebius curvature.  

\section{Preliminaries}\label{preli}

   In this section we collect several definitions and known results that will be needed in the proofs of our results.

   \subsection{Principal normal vector fields}

A normal vector $\eta \in N_{f}M(x)$ is said to be a \emph{principal normal vector with multiplicity $s$} of an isometric immersion $f:M^{n}\to \Tilde{M}^{m}$ at $x \in M^{n}$  if the vector subspace $$ E_{\eta}(x)=\{X\in T_{x}M: \alpha^{f}(X,Y)=\langle X,Y\rangle \eta \,\,\, \,\, \mbox{for all} \,\, Y \in T_{x}M\}$$ has dimension $s>0,$ where $\alpha^{f}$ denotes the second fundamental form of $f$. 
A normal vector field $\eta \in \Gamma(N_{f}M)$ is called a \emph{principal normal vector field} of $f$ with multiplicity $s$ if $\dim E_{\eta}(x)=s$ for all $x\in M^{n}$, in which case $E_{\eta}$ is a smooth distribution.
The normal vector field $\eta \in \Gamma(N_{f}M)$ is said to be \emph{Dupin} if $\nabla^\perp_T\eta=0$ for all $T\in \Gamma(E_{\eta})$. 
If, in particular, $\eta$ is identically zero, then $E_{\eta}(x)$ is the kernel of $\alpha_f$, called  the \emph{relative nullity} subspace of $f$ at $x$ and $\nu(x):=\dim E_{\eta}(x)$ the \emph{index of relative nullity} of $f$ at $x.$

    A smooth distribution $E$ on a Riemannian manifold $M^{n}$ is \emph{umbilical} if there exists a smooth section $\delta$ of $E^{\perp}$, named the \emph{mean curvature vector field} of $E$, such that $$\langle \nabla_{T}S, X\rangle=\langle T,S\rangle \langle \delta, X\rangle$$ for all $T,S \in \Gamma(E)$ and $X \in \Gamma(E^{\perp}).$ If $\delta$ is identically zero, then $E$ is said to be \emph{totally geodesic}. 
    An umbilical distribution on $M^{n}$ is always integrable and its leaves are umbilical submanifolds of $M^{n}.$ If, in addition, $ (\nabla_{T}\delta)_{E^{\perp}}=0$ for all $T \in \Gamma(E)$, then $E$ is said to be \emph{spherical}, and its leaves are called \emph{extrinsic spheres} of $M^n$.

   The following fact is well-known (see, e.g., Proposition $1.22$ of \cite{DT}).
\begin{proposition}[\cite{Re}]\label{dupin}
    \textit{Let $f:M^{n}\to \mathbb{Q}^{m}_{c}$ be an isometric immersion with a principal normal vector field $\eta$ of multiplicity $s>0$. Then the following assertions hold:
    \begin{description}
        \item[(i)] If $s\geq 2$, then $\eta$ is Dupin.
        \item[(ii)] The principal normal vector field $\eta$ is Dupin if and only if $E_{\eta}$ is a spherical distribution and $f$ maps each leaf of $E_{\eta}$ into an extrinsic sphere of $\mathbb{Q}^{m}_{c}.$
    \end{description}}
    
\end{proposition}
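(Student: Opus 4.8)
The engine for both parts is the Codazzi equation of $f$, which for $f\colon M^n\to\mathbb{Q}^m_c$ asserts that $(\nabla^\perp_X\alpha)(Y,Z)$ is symmetric in $X$ and $Y$, where $\alpha$ is the second fundamental form and $(\nabla^\perp_X\alpha)(Y,Z)=\nabla^\perp_X\alpha(Y,Z)-\alpha(\nabla_XY,Z)-\alpha(Y,\nabla_XZ)$. Throughout I use that $\alpha(W,T)=\langle W,T\rangle\eta$ for every $W\in\Gamma(TM)$ whenever $T\in\Gamma(E_\eta)$. For $T,S\in\Gamma(E_\eta)$ and arbitrary $Z$, expanding $(\nabla^\perp_T\alpha)(S,Z)=(\nabla^\perp_S\alpha)(T,Z)$ and using this property gives
\begin{equation}
\langle[T,S],Z\rangle\eta+\langle S,Z\rangle\nabla^\perp_T\eta-\langle T,Z\rangle\nabla^\perp_S\eta=\alpha([T,S],Z).\tag{$\ast$}
\end{equation}
To prove (i), given $T\in\Gamma(E_\eta)$ I would use $s\ge2$ to choose a unit $S\in\Gamma(E_\eta)$ orthogonal to $T$ and set $Z=S$ in $(\ast)$; since $\alpha([T,S],S)=\langle[T,S],S\rangle\eta$, every term cancels except $\nabla^\perp_T\eta$, which must therefore vanish. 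As $T$ is arbitrary, $\eta$ is Dupin.

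For (ii) I would first record the structural facts valid for any principal normal. Taking $Z=X\in\Gamma(E^\perp_\eta)$ in $(\ast)$ gives $\langle[T,S],X\rangle\eta=\alpha([T,S],X)$, which forces $[T,S]\in\Gamma(E_\eta)$; thus $E_\eta$ is integrable. Next, comparing $(\nabla^\perp_X\alpha)(T,S)=\langle T,S\rangle\nabla^\perp_X\eta$ with $(\nabla^\perp_T\alpha)(S,X)$ produces, writing $V=(\nabla_TS)_{E^\perp_\eta}$,
\begin{equation}
\alpha(V,X)=\langle V,X\rangle\eta-\langle T,S\rangle\nabla^\perp_X\eta.\tag{$\ast\ast$}
\end{equation}
Evaluating $(\ast\ast)$ on orthonormal pairs forces $V=0$, and on a single unit vector shows that $(\nabla_TT)_{E^\perp_\eta}$ is independent of the chosen unit $T\in E_\eta$; hence $E_\eta$ is umbilical with a well-defined mean curvature field $\delta\in\Gamma(E^\perp_\eta)$, and $(\ast\ast)$ becomes
\begin{equation}
\alpha(\delta,X)=\langle\delta,X\rangle\eta-\nabla^\perp_X\eta,\qquad X\in\Gamma(E^\perp_\eta).\tag{$\ast\ast\ast$}
\end{equation}
None of these facts uses the Dupin hypothesis.

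I would then pass to a leaf $L$ of $E_\eta$ and its immersion $g=f|_L\colon L\to\mathbb{Q}^m_c$. Umbilicity of $E_\eta$ makes $g$ totally umbilical, with $\alpha^g(T,S)=\langle T,S\rangle(\delta+\eta)$, and a direct computation of its normal connection gives
\begin{equation}
\nabla^{\perp_g}_T(\delta+\eta)=(\nabla_T\delta)_{E^\perp_\eta}+\nabla^\perp_T\eta,\tag{$\dagger$}
\end{equation}
a sum of sections of the orthogonal subbundles $E^\perp_\eta$ and $N_fM$. Consequently $f(L)$ is an extrinsic sphere of $\mathbb{Q}^m_c$ if and only if $(\nabla_T\delta)_{E^\perp_\eta}=0$ and $\nabla^\perp_T\eta=0$, that is, exactly when $E_\eta$ is spherical and $\eta$ is Dupin. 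The right-hand side of (ii) therefore amounts to the conjunction \emph{``$E_\eta$ spherical and $\eta$ Dupin''}; the implication $(\Leftarrow)$ is then immediate, and $(\Rightarrow)$ reduces to the single claim that the Dupin condition forces $E_\eta$ to be spherical.

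This last claim is where the real work lies. For $s\ge2$ it is automatic: by $(\dagger)$ and $(\ast\ast\ast)$ the leaf immersion $g$ is totally umbilical of dimension $\ge2$ in a space form, so its own Codazzi equation forces its mean curvature vector to be parallel, whence $E_\eta$ is spherical (in agreement with (i)). The essential case is $s=1$, where $L$ is a curve and this dimension argument is unavailable. Here I would use a focal construction: by the Dupin condition $\eta$ is parallel of constant length along $L$, so the point $F=f+\eta/|\eta|^2$ (formed using the position vector in the standard model of $\mathbb{Q}^m_c$) is constant along $L$ while $|f-F|=1/|\eta|$ is constant, placing $f(L)$ on a geodesic sphere; the remaining sphericity $(\nabla_T\delta)_{E^\perp_\eta}=0$ I would extract from $(\ast\ast\ast)$ together with a leaf-wise use of the Codazzi equation. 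Establishing this multiplicity-one sphericity---a coordinate-free incarnation of the classical theorem of Dupin---is the main obstacle, since, unlike every other step, it cannot be reduced to a purely algebraic manipulation of the Codazzi identity.
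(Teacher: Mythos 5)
Your proposal cannot be checked against an in-paper argument, because the paper does not prove Proposition \ref{dupin}: it quotes it as a known result of Reckziegel \cite{Re} (see Proposition 1.22 of \cite{DT}). Judged on its own, your part (i) is correct and complete, and it is the standard Codazzi computation. Your structural identities $(\ast)$, $(\ast\ast)$, $(\ast\ast\ast)$ are derived correctly, as are the integrability of $E_\eta$, its umbilicity (note, though, that your ``orthonormal pairs'' step presupposes $s\geq 2$; for $s=1$ umbilicity is trivial), the leafwise formula $(\dagger)$, and the full treatment of $s\geq 2$, where the Codazzi equation of the totally umbilical leaf immersion forces parallel mean curvature. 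Your reduction of (ii) to the single implication ``Dupin $\Rightarrow$ $E_\eta$ spherical'' is the right organization of the proof.

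The genuine gap is exactly where you flag it: the multiplicity-one case of $(\Rightarrow)$. Your focal construction (Dupin gives $T\vert\eta\vert^{2}=2\langle\nabla^\perp_T\eta,\eta\rangle=0$ and $f+\eta/\vert\eta\vert^{2}$ constant along the leaf, when $\eta\neq 0$) proves only that the leaf lies on a geodesic hypersphere; by your own $(\dagger)$ what remains is precisely $(\nabla_T\delta)_{E_\eta^\perp}=0$, i.e.\ that the leaf-curve is an extrinsic \emph{circle} rather than an arbitrary curve on a sphere. This is the actual content of Reckziegel's theorem for $s=1$ (classically: the curvature lines of a one-sided Dupin principal curvature are circular arcs), and ``I would extract it from $(\ast\ast\ast)$ together with a leaf-wise use of the Codazzi equation'' is a plan, not a proof: carrying it out requires differentiating $(\ast\ast\ast)$ along $E_\eta$ and combining it with Codazzi on the triple $(T,\delta,X)$ and the Ricci equation (one does get $R^\perp(T,X)\eta=0$ for free, since $A_\zeta T=\langle\eta,\zeta\rangle T$ for every $\zeta$), and the resulting identity does not collapse in one line. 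A secondary, smaller inaccuracy: the proposition says $f$ maps each leaf \emph{into} an extrinsic sphere, whereas your $(\dagger)$-based equivalence identifies the right-hand side of (ii) with ``$E_\eta$ spherical and the leaf \emph{is} an extrinsic sphere''; for $s=1$ containment is a priori weaker, so your $(\Leftarrow)$ as written proves the implication only under the stronger reading, and deducing the Dupin condition from sphericality plus mere containment needs a further (short but nontrivial) argument. So: (i) is proved, (ii) is proved for $s\geq 2$, and the $s=1$ case of (ii) remains open in your proposal.
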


  A key result in the proof of our main result in this article is the following.

  \begin{theorem}[\cite{DFT}]\label{cor:dupinpcn} Let $f\colon M^n\to\mathbb{R}^m$ be an 
isometric immersion that carries a Dupin principal normal vector field 
$\eta$ with multiplicity $k$. Assume that  $E_\eta^\perp$ is an umbilical 
distribution. If $k=n-1$, suppose further that the integral curves of 
$E_\eta^\perp$ are extrinsic circles of $M^n$. 
Then $f(M^{n})$ is, up to a conformal transformation of $\mathbb{R}^m$, an 
open subset of a submanifold of one of the following types:
\begin{itemize}
\item[(i)] A $k$-cylinder over an isometric immersion $g\colon M^{n-k}\to\mathbb{R}^{m-k}$;
\item[(ii)] A $(k-1)$-cylinder over an isometric immersion 
$G\colon M^{n-k+1}\to\mathbb{R}^{m-k+1}$ which is itself 
a cone over an isometric immersion $g\colon M^{n-k}\to\mathbb{S}^{m-k}$;
\item[(iii)] A rotation submanifold over an isometric immersion 
$h\colon M^{n-k}\to\mathbb{R}^{m-k}$.
\end{itemize}
\end{theorem}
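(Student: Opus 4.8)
My plan is to convert the two hypotheses into a pair of complementary \emph{spherical} foliations and then recognize $(M^n,g)$, after a conformal change, as a warped product that $f$ respects. Since $\eta$ is Dupin, Proposition \ref{dupin}(ii) already gives that $E_\eta$ is a spherical distribution of $M^n$ and that $f$ maps each leaf of $E_\eta$ into an extrinsic sphere $\Sigma^k$ of $\mathbb{R}^m$ (a round sphere or an affine $k$-subspace). Thus the leaves of $E_\eta$ are carried to totally umbilical round pieces, and $f(M^n)$ is swept out by a family of such spheres parametrized by the leaf space of $E_\eta^\perp$.

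The first real step is to upgrade $E_\eta^\perp$ from umbilical to spherical. Writing $\delta\in\Gamma(E_\eta)$ for its mean curvature vector field, sphericity is the condition $(\nabla_X\delta)_{E_\eta^\perp}=0$ for all $X\in\Gamma(E_\eta^\perp)$. When $n-k=\dim E_\eta^\perp\ge 2$ I expect this to follow from the Codazzi equation of $f$ together with the umbilicity of $E_\eta^\perp$ and the fact that $\eta$ is a principal normal on $E_\eta$, the mixed Codazzi terms forcing $\delta$ to be parallel in $E_\eta^\perp$. When $k=n-1$, so that $E_\eta^\perp$ is a line field, umbilicity is vacuous and sphericity is exactly the extra hypothesis that the integral curves of $E_\eta^\perp$ are extrinsic circles; this is precisely why that assumption is needed only in the top multiplicity case.

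Once both $E_\eta$ and $E_\eta^\perp$ are spherical, their mean curvature vector fields $\zeta\in\Gamma(E_\eta^\perp)$ and $\delta\in\Gamma(E_\eta)$ are closed and, locally, $\delta=-\grad\log\rho$ for a positive function $\rho$ constant along the leaves of $E_\eta^\perp$. Taking the conformal factor $\varphi=\rho^{-1}$, the metric $\tilde g=\varphi^{2}g$ makes $E_\eta^\perp$ totally geodesic while $E_\eta$ remains spherical, so by Hiepko's theorem $(M^n,\tilde g)$ is locally a warped product $B^{n-k}\times_\sigma F^{k}$ with base $B$ a leaf of $E_\eta^\perp$ and fiber $F$ a leaf of $E_\eta$. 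The key point is that this particular conformal change of the induced metric is realized by an inversion, i.e.\ by a conformal transformation of $\mathbb{R}^m$; after composing $f$ with it we may assume that $M^n$ is such a warped product and that $f$ still maps the fibers $F$ into extrinsic spheres.

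The final step is to show that $f$ is a warped product immersion and to read off the three models. Because $f$ carries the fibers $\{b\}\times F$ to extrinsic spheres of $\mathbb{R}^m$ and $E_\eta$ is spherical, the second fundamental form $\alpha^f$ and the normal connection of $f$ split along $TM=E_\eta^\perp\oplus E_\eta$; a reduction of codimension then places $f$ within the scope of Nölker's classification of isometric immersions of warped products into $\mathbb{R}^m$. Finally, the isometric immersions of $B\times_\sigma F$ into $\mathbb{R}^m$ that carry the fibers to round $k$-spheres correspond to the three canonical ways of foliating $\mathbb{R}^m$ (or a conformal image of it) by such spheres: parallel affine $k$-planes, spheres collapsing to a common vertex, and the orbit spheres of the rotations fixing an $(m-k)$-dimensional axis, yielding respectively a $k$-cylinder, a $(k-1)$-cylinder over a cone over $g\colon M^{n-k}\to\mathbb{S}^{m-k}$, and a rotation submanifold, i.e.\ cases (i), (ii) and (iii). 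I expect the main obstacle to be exactly this last synthesis: verifying that the mixed components of $\alpha^f$ vanish so that $f$ is genuinely adapted to the warped product net, and tracking the conformal transformation and the reduction of codimension carefully enough to land in precisely one of the three normal forms, with the borderline case $k=n-1$ (where $F$ is a circle) requiring separate bookkeeping.
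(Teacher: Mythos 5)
The statement you were asked to prove is not proved in this paper at all: it is quoted verbatim from \cite{DFT}, so the relevant comparison is with the argument of Dajczer--Florit--Tojeiro. Their proof does not pass through sphericity of $E_\eta^\perp$; instead it studies the $(n-k)$-parameter congruence of $k$-dimensional extrinsic spheres of $\mathbb{R}^m$ containing the images of the leaves of $E_\eta$, and shows that the umbilicity of $E_\eta^\perp$ (plus the circle condition when $k=n-1$) forces this congruence to be of one of three Moebius-geometric types --- spheres mutually tangent at a common point, spheres through a common $(k-1)$-sphere, or a family of pairwise disjoint orbit spheres --- which after a suitable conformal transformation become parallel $k$-planes, $k$-planes through a common $(k-1)$-plane, and the orbits of a rotation group, yielding cases (i), (ii) and (iii) respectively.

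Your sketch has a genuine gap at its very first ``real step''. The claimed upgrade of $E_\eta^\perp$ from umbilical to spherical via the Codazzi equation, when $n-k\geq 2$, is false: every hypothesis of the theorem is invariant under composing $f$ with a conformal transformation of $\mathbb{R}^m$ (the Dupin condition and umbilicity of distributions are conformally invariant notions), whereas sphericity of $E_\eta^\perp$ is not. Concretely, composing a cylinder $g\times \Id$ with a generic inversion produces an immersion satisfying all the hypotheses for which the pair $(E_\eta^\perp,E_\eta)$ is only a $\TP$-net --- the induced metric is a genuinely twisted product --- so $E_\eta^\perp$ is umbilical but not spherical, and no Codazzi computation can prove otherwise. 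Consequently Hiepko's theorem cannot be invoked, and your subsequent assertion that the conformal change $\varphi=\rho^{-1}$ of the induced metric ``is realized by an inversion'' is precisely the heart of the theorem and is left entirely unjustified: a generic conformal rescaling of the induced metric of a submanifold is not induced by any ambient conformal map, and identifying the correct ambient transformation is what the sphere-congruence analysis in \cite{DFT} accomplishes. Two smaller remarks: the ``main obstacle'' you flag at the end, the vanishing of the mixed components of $\alpha^f$, is in fact immediate, since $\alpha^f(T,X)=\langle T,X\rangle\eta=0$ for $T\in\Gamma(E_\eta)$ and $X\in\Gamma(E_\eta^\perp)$ by the very definition of $E_\eta$; and your observation that for $k=n-1$ the extrinsic-circle hypothesis is exactly sphericity of the line field $E_\eta^\perp$ is correct and does explain why that extra assumption appears only in the top-multiplicity case.
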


  The immersion $f$ in part $(ii)$ (respectively, part $(iii)$) of the preceding theorem can be alternatively described in terms of the conformal diffeomorphism   $\Theta:\mathbb{S}^{m-k}\times \mathbb{H}^{k}\to \mathbb{R}^{m}$ (respectively, $\Theta:\mathbb{S}^{k}\times \mathbb{H}^{m-k}\to \mathbb{R}^{m}$) defined by $ \Theta(y,z)=(z_1y, z_2, \ldots, z_{k})$  (respectively, $ \Theta(y,z)=(z_1y, z_2, \ldots, z_{m-k})$).
Namely, $M^n$ splits as $M^n=M^{n-k}\times \mathbb{H}^{k}$ (respectively, $M^n=\mathbb{S}^{k}\times M^{n-k}$) and $f=\Theta\circ (g\times Id)$ (respectively, $f=\Theta\circ (Id\times h)$), where $ \mathbb{H}^{k}$ is given by its half-space model.

\subsection{Conformally flat manifolds and submanifolds}

A Riemannian manifold $M^{n}$ is said to be \emph{conformally flat} if each point $x$ has a neighborhood which is conformally diffeomorphic to an open subset of Euclidean space $\mathbb{R}^{n}$. In particular, every Riemannian manifold with constant sectional curvature is conformally flat. We will need the following well-known characterization of conformally flat Riemannian products. 

\begin{proposition}\label{cf-products}
    A Riemannian product $(M_1, g_1)\times (M_2, g_2)=(M_1\times M_2, g_1 + g_2)$ of dimension $n \geq 3$ is conformally flat if and only if one of the following possibilities hold \begin{description}
    \item[i)] $(M_i,g_i)$ \textit{is $1$-dimensional and $(M_j, g_j)$, $i \ne j$, has constant sectional curvature.
    \item[ii)]$(M_1,g_1)$ and $(M_2,g_2)$ are either both flat or are  manifolds with dimension at least two of constant sectional curvatures  with the same absolute values and opposite signs}.
\end{description}
\end{proposition}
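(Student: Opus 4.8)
The plan is to reduce everything to the standard curvature criteria for conformal flatness applied to the product curvature. Recall that for a Riemannian product the curvature tensor, the Ricci tensor and the scalar curvature all split as direct sums without mixed terms: writing $X=X_1+X_2$, $Y=Y_1+Y_2$, $Z=Z_1+Z_2$ according to $TM=TM_1\oplus TM_2$, the $TM_i$-component of $R(X,Y)Z$ depends only on the $TM_i$-components of $X,Y,Z$, and $\Ric(X_i,Y_j)=0$ for $i\neq j$. Consequently the Schouten tensor $P=\frac{1}{n-2}\bigl(\Ric-\frac{s}{2(n-1)}g\bigr)$, with $s$ the scalar curvature, is block diagonal, $P=P_1\oplus P_2$ on $TM_1\oplus TM_2$; crucially, the normalization couples the factors through $s=s_1+s_2$ and the total dimension $n$, and this coupling is what will produce the relation between the factor curvatures.

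For $n\geq 4$ I would invoke the fact that $(M^n,g)$ is conformally flat if and only if its Weyl tensor vanishes, i.e.
\[
R(X,Y,Z,W)=P(X,Z)g(Y,W)-P(X,W)g(Y,Z)+P(Y,W)g(X,Z)-P(Y,Z)g(X,W).
\]
Evaluating this identity on a mixed orthonormal pair $X\in TM_1$, $Y\in TM_2$ gives $0=R(X,Y,X,Y)=P(X,X)+P(Y,Y)$, which forces $P|_{TM_1}=c\,g_1$ and $P|_{TM_2}=-c\,g_2$ for a single function $c$. Feeding this back, the mixed components of both sides vanish automatically (those of $R$ because it is a product, those of the right-hand side because of the $\pm c$ structure), while the restriction to a single factor shows that $R_i$ is the curvature tensor of constant sectional curvature; Schur's lemma when $\dim M_i\geq 3$ (and triviality when $\dim M_i=2$) then makes each factor a space form. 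Translating $P|_{TM_1}=c\,g_1$, $P|_{TM_2}=-c\,g_2$ back through the definition of $P$ with $\Ric_i=(n_i-1)\kappa_i\,g_i$ and $s_i=n_i(n_i-1)\kappa_i$ yields, after a short computation, $(n_1-1)(n_2-1)(\kappa_1+\kappa_2)=0$. Hence if both factors have dimension $\geq 2$ one gets $\kappa_2=-\kappa_1$ (case ii), while if one factor is $1$-dimensional the relation is vacuous and only the constancy of the other factor's curvature survives (case i). The converse for $n\geq4$ is obtained by reversing these steps.

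The genuinely separate case is $n=3$, where $M=M_1^1\times M_2^2$ and the Weyl tensor vanishes identically, giving no information. Here conformal flatness is equivalent to the vanishing of the Cotton tensor $C(X,Y,Z)=(\nabla_XP)(Y,Z)-(\nabla_YP)(X,Z)$. With $\Ric_1=0$ and $\Ric_2=K\,g_2$, where $K$ is the Gauss curvature of the surface factor, one computes $P=-\tfrac{K}{2}g_1\oplus\tfrac{K}{2}g_2$, and since $g_1,g_2$ are parallel the only surviving component is, for $\partial_t$ a unit field tangent to $M_1$ and $Y\in TM_2$,
\[
C(\partial_t,Y,\partial_t)=\tfrac12\,Y(K).
\]
Thus $C\equiv 0$ if and only if $K$ is constant, i.e. the surface factor is a space form (case i). This same gradient computation settles sufficiency in every dimension at once: for constant-curvature factors $P$ is parallel, so the Cotton tensor vanishes for free, which is the only extra check needed beyond the displayed Weyl identity.

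I expect the main obstacle to be precisely this dimension-$3$ phenomenon: unlike $n\geq4$, where the purely algebraic condition $W=0$ simultaneously encodes the constancy and the matching of the factor curvatures, for $n=3$ one must pass to the first-order Cotton tensor, and the constancy of the Gauss curvature of the surface factor — which is not automatic — emerges only from $\nabla K=0$. The other point requiring care is keeping track of the coupling introduced by the global normalization of $P$ through $s=s_1+s_2$ and $n$, since it is exactly this coupling that forces the sign-and-magnitude relation $\kappa_1=-\kappa_2$ in case (ii).
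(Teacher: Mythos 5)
Your proposal is correct, but there is no internal proof to compare it with: the paper states Proposition \ref{cf-products} as a well-known fact and omits the argument entirely (it is classical, going back to Kulkarni and to Yau, both of whom appear in the bibliography). Your route --- block-diagonal Schouten tensor, the mixed Weyl component forcing $P|_{TM_1}=c\,g_1$ and $P|_{TM_2}=-c\,g_2$ for a single function $c$, the resulting identity $(n_1-1)(n_2-1)(\kappa_1+\kappa_2)=0$ (which I checked; the coefficient computation with $s=s_1+s_2$ is right), and the Cotton tensor for the genuinely different case $n=3$ --- is the standard proof and is essentially complete; it is also slightly different in spirit from the tool the paper itself uses elsewhere for conformal flatness, namely Kulkarni's sectional-curvature identity $K(X_i,X_j)+K(X_k,X_r)=K(X_i,X_k)+K(X_j,X_r)$ (used in Lemma \ref{lema-ortogonal}), which would give a more elementary proof of the necessity direction for $n\geq 4$ but says nothing when $n=3$, where your Cotton argument is exactly what is needed. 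Two small imprecisions are worth repairing. First, ``triviality when $\dim M_i=2$'' does not by itself make a surface factor a space form; what actually gives constancy of $c$ when both factors have dimension $\geq 2$ is that the Weyl identity restricted to each factor identifies $2c$ with the sectional curvature of $M_1$, a function of $x_1$ alone, and $-2c$ with that of $M_2$, a function of $x_2$ alone, so $c$ is constant by separation of variables --- Schur's lemma is genuinely needed only in case (i) with a factor of dimension $\geq 3$, where this two-sided dependence is unavailable. Second, in the $n=3$ computation it is not true that $C(\partial_t,Y,\partial_t)$ is the only surviving component: for $X,Y,Z$ tangent to the surface factor one gets $C(X,Y,Z)=\tfrac12\big(X(K)\,g_2(Y,Z)-Y(K)\,g_2(X,Z)\big)$, which does not vanish identically; it merely imposes the same condition $\grad K=0$, so your conclusion --- and the proof as a whole --- stands.
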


   We will also make use of the following extension due to J. D. Moore (\cite{Moore}) of    E. Cartan's theorem on Euclidean conformally flat hypersurfaces of dimension $n\geq 4$. 

\begin{theorem}\label{theorem-moore}
	Let $f:M^{n}\to \mathbb{R}^{n+p}$, $n \geq 4$, be an isometric immersion of a conformally flat manifold. If $p \le n-3$, then for each $x \in M^{n}$ there exists a principal normal vector $\eta \in N_{f}M(x)$ such that $\dim \, E_{\eta}(x) \geq n-p \geq 3.$  
\end{theorem}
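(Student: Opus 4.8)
The plan is to reduce the statement to a pointwise problem in linear algebra: fix $x\in M^n$ and regard the second fundamental form $\alpha:=\alpha^f$ of $f$ at $x$ as a symmetric bilinear form on $V:=T_xM$ with values in the Euclidean normal space $N_fM(x)\cong\mathbb{R}^p$. Since the conclusion is the existence of a principal normal at the single point $x$, no smoothness or uniformity is needed and the argument is purely algebraic at each point. The first input I would use is the classical characterization of conformal flatness valid for $n\geq 4$: the Weyl tensor vanishes, which is equivalent to saying that the curvature tensor is the Kulkarni--Nomizu product of the metric with the Schouten tensor $L=\frac{1}{n-2}\big(\Ric-\frac{s}{2(n-1)}\langle\cdot,\cdot\rangle\big)$. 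Explicitly,
\[
\langle R(X,Y)Z,W\rangle=L(X,Z)\langle Y,W\rangle+\langle X,Z\rangle L(Y,W)-L(X,W)\langle Y,Z\rangle-\langle X,W\rangle L(Y,Z)
\]
for all $X,Y,Z,W\in V$.

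The central device is to absorb $L$ into an augmented second fundamental form taking values in a Lorentzian space. I would introduce a Lorentzian plane $\mathbb{R}^{1,1}$ with a null basis $e_1,e_2$ (so $\langle e_i,e_i\rangle=0$ and $\langle e_1,e_2\rangle=1$), set $W=\mathbb{R}^p\oplus\mathbb{R}^{1,1}\cong\mathbb{R}^{p+1,1}$, and define
\[
\hat\beta(X,Y):=\alpha(X,Y)+L(X,Y)\,e_1+\langle X,Y\rangle\,e_2 .
\]
A direct expansion, using the null pairing of $e_1,e_2$, shows that the Gauss equation of $f$ together with the curvature identity above is \emph{precisely equivalent} to the flatness of $\hat\beta$, namely
\[
\langle\hat\beta(X,W),\hat\beta(Y,Z)\rangle=\langle\hat\beta(X,Z),\hat\beta(Y,W)\rangle\qquad\text{for all }X,Y,Z,W\in V .
\]
Thus conformal flatness of $M^n$ is encoded exactly as the flatness of a symmetric bilinear form on $V^n$ with values in the index-one space $\mathbb{R}^{p+1,1}$, whose $e_2$-component is the metric itself.

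Next I would translate the desired conclusion into a statement about $\hat\beta$. Suppose there is a subspace $U\subseteq V$ with $\dim U\geq n-p$ on which $\hat\beta$ is \emph{umbilical}, i.e. $\hat\beta(X,Y)=\langle X,Y\rangle\,\zeta$ for all $X\in U$, $Y\in V$ and some fixed $\zeta\in W$. Projecting onto the Euclidean factor $\mathbb{R}^p$ then yields $\alpha(X,Y)=\langle X,Y\rangle\,\eta$ with $\eta$ the $\mathbb{R}^p$-component of $\zeta$, so that $U\subseteq E_\eta(x)$ and $\dim E_\eta(x)\geq n-p$, which is exactly the theorem. One checks that such an umbilical direction is compatible with flatness only when $\zeta$ is a null vector, so this phenomenon is genuinely tied to the indefiniteness of $W$. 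It is worth noting that the nullity $N(\hat\beta)=\{X:\hat\beta(X,\cdot)=0\}$ is trivial, since the $e_2$-component already forces $\langle X,\cdot\rangle=0$; consequently the positive-definite Chern--Kuiper nullity estimate cannot be applied and gives no information, which is precisely why the index-one analysis is required.

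The main step, and the principal obstacle, is therefore the algebraic lemma of Moore on flat bilinear forms valued in an inner product space of index one: under the hypothesis $p\leq n-3$, a flat $\hat\beta:V^n\times V^n\to\mathbb{R}^{p+1,1}$ admits a null vector $\zeta$ and an umbilical subspace $U$ with mean curvature vector $\zeta$ and $\dim U\geq n-p\geq 3$. The delicate point is the role of the isotropic vectors of the image $S(\hat\beta)$. I expect the argument to proceed by a dichotomy: if $S(\hat\beta)$ is degenerate, a null vector orthogonal to the whole image lets one reduce the effective codimension and argue as in the definite case; if $S(\hat\beta)$ is nondegenerate, a finer analysis of the regular elements of $\hat\beta$ and of their kernels, exploiting that the $e_2$-slot is the metric, produces the common null umbilical direction. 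Carrying out this dichotomy and verifying that the resulting subspace has dimension at least $n-p$ — which is exactly where the codimension bound $p\leq n-3$ enters — is the technical heart; once it is established, the theorem follows immediately from the reduction above, and it specializes for $p=1$ to E. Cartan's classical conclusion of a principal curvature of multiplicity $n-1$.
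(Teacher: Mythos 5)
The paper offers no proof of this statement at all --- it is quoted as Moore's theorem \cite{Moore} --- and your proposal faithfully reconstructs Moore's original argument: your verification that flatness of the Lorentzian-augmented form $\hat\beta(X,Y)=\alpha(X,Y)+L(X,Y)e_1+\langle X,Y\rangle e_2$ is equivalent to the Gauss equation together with the vanishing of the Weyl tensor is correct (as are the side observations that the umbilical vector $\zeta$ is forced to be null and that the $e_2$-component makes the nullity of $\hat\beta$ trivial, so the definite Chern--Kuiper estimate is unavailable), and projecting the umbilical subspace to the Euclidean factor yields exactly $\dim E_{\eta}(x)\geq n-p\geq 3$. The one step you do not carry out --- Moore's algebraic lemma on flat symmetric bilinear forms with values in an index-one space --- is precisely the technical core of \cite{Moore}, but since you invoke it explicitly as a known result and your sketched dichotomy (degenerate versus nondegenerate span of the image, with the analysis of regular elements in the latter case) matches how that lemma is actually proved, your proposal is correct and follows essentially the same route as the source the paper cites.
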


Isometric immersions with flat normal bundle $f:M^{n}\to \mathbb{R}^{n+p}$, $n \geq 4$, of a conformally flat manifold have been recently studied by Dajczer, Onti and Vlachos \cite{DOV}, who in  particular proved the following fact.

\begin{theorem}\label{theorem-DOV}
		A proper isometric immersion $f:M^{n} \to \mathbb{R}^{n+p}$, $n \geq 4$, with flat normal bundle of a conformally flat manifold can admit at  most one principal normal vector field of multiplicity greater than one.
  \end{theorem}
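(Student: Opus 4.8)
The plan is to play off two expressions for the sectional curvatures of $M^n$ against each other: the one furnished by the Gauss equation, which is available precisely because the normal bundle is flat, and the one imposed by conformal flatness. Since $f$ has flat normal bundle, at each $x\in M^n$ the shape operators form a commuting family of self-adjoint endomorphisms of $T_xM$, hence admit a common orthonormal eigenbasis $e_1,\dots,e_n$. Setting $\eta_i=\alpha^f(e_i,e_i)$, one has $\alpha^f(e_i,e_j)=\delta_{ij}\eta_i$, and the $\eta_i$ are exactly the principal normals of $f$ at $x$: a principal normal vector field of multiplicity $s$ corresponds to a block of $s$ indices sharing a common $\eta_i$. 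First I would record that in this frame the Gauss equation collapses to
\[
K_{ij}:=K(e_i,e_j)=\langle\eta_i,\eta_j\rangle\qquad(i\neq j),
\]
and that $\Ric$ is diagonal in the same frame, since once $\alpha^f$ is diagonalized the mixed components $R(e_i,e_k,e_j,e_k)$ all vanish for $i\neq j$. Hence the Schouten tensor $L$, being a fixed linear combination of $\Ric$ and the metric, is also diagonal in this frame, with eigenvalues $\ell_i:=L(e_i,e_i)$.

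The second step is to bring in conformal flatness. For $n\ge 4$ this is equivalent to the vanishing of the Weyl tensor, i.e. to the curvature tensor equalling the Kulkarni--Nomizu product of $L$ with the metric; evaluated on an orthonormal frame that diagonalizes $L$, this is well known to give $K_{ij}=\ell_i+\ell_j$ for $i\neq j$ (as a sanity check, the unit sphere has $\ell_i=1/2$ and $K_{ij}=1$). Comparing the two expressions for $K_{ij}$ yields the single relation that drives the whole argument:
\[
\langle\eta_i,\eta_j\rangle=\ell_i+\ell_j\qquad\text{for all } i\neq j.
\]

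With this identity I would argue by contradiction. Assume $f$ carried two distinct principal normal vector fields $\eta\neq\zeta$, both of multiplicity at least two. Since $n\ge 4$, at a fixed point we may choose indices with $\eta_1=\eta_2=\eta$ and $\eta_3=\eta_4=\zeta$, the four directions being distinct. Applying the relation to the index pairs $(1,3),(1,4)$ and $(1,3),(2,3)$ forces $\ell_3=\ell_4=:b$ and $\ell_1=\ell_2=:a$; the pairs $(1,2),(3,4),(1,3)$ then give $|\eta|^2=2a$, $|\zeta|^2=2b$ and $\langle\eta,\zeta\rangle=a+b$. The Cauchy--Schwarz inequality $\langle\eta,\zeta\rangle^2\le|\eta|^2|\zeta|^2$ now reads $(a+b)^2\le 4ab$, that is $(a-b)^2\le 0$; therefore $a=b$ and equality holds in Cauchy--Schwarz, which together with $\langle\eta,\zeta\rangle=2a=|\eta|^2=|\zeta|^2$ forces $\eta=\zeta$ (and in the degenerate case $a=0$ one gets $\eta=\zeta=0$ at once). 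This contradiction establishes the theorem.

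The Gauss-equation bookkeeping is routine; the step that really needs care --- and the main obstacle --- is checking that a single orthonormal frame simultaneously diagonalizes all the shape operators and the Schouten tensor, so that the conformally flat identity $K_{ij}=\ell_i+\ell_j$ and the Gauss identity $K_{ij}=\langle\eta_i,\eta_j\rangle$ may legitimately be compared in the very same frame. The properness hypothesis plays a supporting role: it guarantees that the principal normals assemble into genuine vector fields of locally constant multiplicity, so that the pointwise contradiction above is meaningful throughout $M^n$.
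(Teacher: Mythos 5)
Your proof is correct, but it is worth noting that the paper does not prove this statement at all: Theorem \ref{theorem-DOV} is quoted from the reference [DOV] (Dajczer--Onti--Vlachos), so the comparison is with the standard argument, a version of which the authors themselves deploy in Lemma \ref{lema-ortogonal}. That argument is a one-liner from the four-index Kulkarni identity: conformal flatness for $n\geq 4$ gives $K_{12}+K_{34}=K_{13}+K_{24}$, which via the Gauss equation $K_{ij}=\langle\eta_i,\eta_j\rangle$ reads $|\eta|^{2}+|\zeta|^{2}=2\langle\eta,\zeta\rangle$, i.e. $|\eta-\zeta|^{2}=0$, for $e_1,e_2\in E_\eta$ and $e_3,e_4\in E_\zeta$ --- no Schouten tensor and no Cauchy--Schwarz needed. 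Your route through the Schouten eigenvalues is a valid variant, but your Cauchy--Schwarz detour is superfluous: once you have $\ell_1=\ell_2=a$, $\ell_3=\ell_4=b$, $|\eta|^{2}=2a$, $|\zeta|^{2}=2b$ and $\langle\eta,\zeta\rangle=a+b$, the identity $|\eta-\zeta|^{2}=2a+2b-2(a+b)=0$ finishes immediately. Also, the step you flag as the main obstacle is actually a non-issue: when the Weyl tensor vanishes, $K_{ij}=L(e_i,e_i)+L(e_j,e_j)$ holds in \emph{every} orthonormal frame, since the off-diagonal terms of $L$ in the Kulkarni--Nomizu product $L\odot g$ enter multiplied by $g(e_i,e_j)=0$; your verification that $\Ric$ (hence $L$) is diagonal in the common eigenframe of the shape operators is correct but not needed. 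What your formulation buys is the explicit dictionary $\langle\eta_i,\eta_j\rangle=\ell_i+\ell_j$ between the principal normals and the Schouten eigenvalues, which is slightly more information than the theorem requires; what the Kulkarni route buys is brevity and the elimination of any intrinsic tensor beyond the sectional curvatures.
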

  
  \subsection{Twisted products}
  
      Let $M=\Pi_{i=0}^{k}M_i$ 
      be a product of smooth manifolds $M_0, \ldots, M_k$. A metric $\langle \, , \,\rangle$ on $M^{n}$ is called a \emph{twisted product metric} if there exist Riemannian metrics $\langle \,,\,\rangle_i$ on $M_i$ and smooth maps $\rho_i:M\to \mathbb{R}_{+}$, $0 \le i \le k,$ such that $$\langle \,,\,\rangle=\sum_{i=0}^{k}\rho_i^{2}\pi_i^{*}\langle \,,\,\rangle_{i},$$ where $\pi_i:M \to M_i$ denotes the canonical projection. Then $(M, \langle \,,\,\rangle)$ is said to be a \emph{twisted product} and is denoted by ${^{\rho}\prod_{i=0}^{k}}(M_i, \langle \,,\,\rangle_i)$, where $\rho=(\rho_0, \ldots, \rho_k)$.  When $\rho_1, \ldots, \rho_k$ are independent of $M_1, \ldots, M_k$, that is, there exist $\Tilde{\rho}_a \in C^{\infty}(M_0)$ such that $\rho_a=\Tilde{\rho}_a \circ \pi_0$ for $a=1, \ldots, k$ and, in addition, $\rho_0$ is identically $1$, then $\langle \,,\,\rangle$ is called a \emph{warped product metric} and $(M, \langle \,,\,\rangle):=(M_0, \langle \,,\,\rangle_0)\times _{\rho}\prod_{a=1}^{k}(M_a, \langle \,,\,\rangle_a)$ a \emph{warped product} with \emph{warping function} $\rho=(\rho_1, \ldots, \rho_k).$ If $\rho_i$ is identically $1$ for $i=0, \ldots, k,$ the metric $\langle \,,\,\rangle$ is a usual \emph{Riemannian product metric}, in which case $(M, \langle \,,\,\rangle)$ is called a \emph{Riemannian product}.
      
      A \emph{net} $\mathcal{E}=(E_i)_{i=0}^{r}$ on a differentiable manifold $M$ is a splitting $TM=\oplus_{i=1}^{r}E_i$ of its tangent bundle into a family of integrable distributions. A net $\mathcal{E}=(E_i)_{i=1}^{r}$ on a Riemannian manifold $M$ is called an \emph{orthogonal net} if the distributions of $\mathcal{E}$ are mutually orthogonal.

 Let $M$ be a product manifold. The \emph{product net} of $M$, $\mathcal{E}=(E_i)_{i=0}^{k}$, is defined by $$E_i(x)={\tau_i^{x}}_{*}T_{x_i}M_i, \quad 0 \le i \le k,$$ for any $x=(x_0, \ldots, x_{k})\in M,$ where $\tau_i^{x}:M_i \to M$ is the inclusion of $M_i$ into $M$ given by $$\tau_{i}^{x}(\Bar{x}_i)=(x_0, \ldots, \Bar{x}_i, \ldots, x_k), \quad 0 \le i \le k.$$

  The relation between the Levi-Civita connections of a twisted-product metric and of the corresponding Riemannian product metric is as follows (see \cite{MHM}).

 \begin{proposition}[\cite{MHM}]
Let $(M, \langle\,,\,\rangle)={^{\rho}\prod_{i=0}^{k}}(M_i, \langle \,,\,\rangle_i)$ be a twisted product with twist function  $\rho=(\rho_0, \ldots, \rho_k)$ and  product net $\mathcal{E}=(E_i)_{i=0}^{k}$. Let $\nabla$ and $\tilde{\nabla}$ be the Levi-Civita connections of $\langle\,,\,\rangle $ and of the product metric $\langle \,,\,\tilde{\rangle}$, respectively, and let $U_i=-\grad\,(\log \circ \rho_i)$, $0 \le i \le k$, where the gradient is calculated with respect to $\langle \,,\,\rangle.$ Then \begin{equation}\label{twisted-riemannian}
    \nabla_{X}Y=\tilde{\nabla}_{X}Y+\sum_{i=0}^{k}(\langle X^{i},Y^{i} \rangle U_i-\langle X, U_i \rangle Y^{i}-\langle Y, U_i\rangle X^{i}),
\end{equation} where $X \mapsto X^{i}$ is the orthogonal projection over $E_i.$
\end{proposition}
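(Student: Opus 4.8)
The plan is to use the uniqueness of the Levi-Civita connection. Denote by $\bar\nabla$ the operator defined by the right-hand side of \eqref{twisted-riemannian}, that is, $\bar\nabla_X Y=\tilde\nabla_X Y+C(X,Y)$, where
$$C(X,Y)=\sum_{i=0}^{k}\left(\langle X^i,Y^i\rangle U_i-\langle X,U_i\rangle Y^i-\langle Y,U_i\rangle X^i\right).$$
Since $C$ is manifestly $C^\infty(M)$-bilinear, $\bar\nabla$ is an affine connection; and since $C$ is symmetric in its two arguments, the expression $\bar\nabla_X Y-\bar\nabla_Y X-[X,Y]$ reduces to the torsion of $\tilde\nabla$, which vanishes. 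Hence $\bar\nabla$ is torsion-free. By the fundamental theorem of Riemannian geometry it therefore suffices to show that $\bar\nabla$ is compatible with the twisted product metric $\langle\,,\,\rangle$, for then $\bar\nabla=\nabla$ and \eqref{twisted-riemannian} follows.

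The crux is thus to establish that
$$\Phi(X,Y,Z):=X\langle Y,Z\rangle-\langle\bar\nabla_X Y,Z\rangle-\langle Y,\bar\nabla_X Z\rangle$$
vanishes identically. The difficulty is a mismatch of metrics: the connection $\tilde\nabla$ is compatible with the product metric $\langle\,,\,\rangle^{\sim}$ rather than with $\langle\,,\,\rangle$, and, because each $\rho_i$ is in general a function of all factors, the field $U_i=-\grad(\log\circ\rho_i)$ need not be a section of $E_i$. To bypass this I would first observe that $\Phi$ is tensorial: the derivation term $X\langle Y,Z\rangle$ is $C^\infty(M)$-linear in $X$, and the Leibniz-type terms produced when $Y$ or $Z$ is multiplied by a function cancel against the corresponding terms of $\langle\bar\nabla_X\,\cdot\,,\,\cdot\,\rangle$, so that $\Phi$ is $C^\infty(M)$-trilinear and symmetric in $Y$ and $Z$. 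Consequently it is enough to verify $\Phi\equiv0$ on a local frame, and I would take the frame consisting of lifts of vector fields from the factors $M_i$, each such lift being a section of a single block $E_i$.

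On such pure lifts the computation becomes tractable, because the product connection $\tilde\nabla$ has a transparent structure: if $X$ and $Y$ are lifts from the same factor then $\tilde\nabla_X Y$ is again a section of the corresponding block, while if they are lifts from distinct factors then $\tilde\nabla_X Y=0$ and $[X,Y]=0$. Writing $X\in E_a$, $Y\in E_b$, $Z\in E_c$ and using the two basic relations $\langle\,,\,\rangle|_{E_i}=\rho_i^2\langle\,,\,\rangle^{\sim}|_{E_i}$ and $\langle U_i,W\rangle=-W(\log\rho_i)$, the verification splits according to the coincidences among $a,b,c$. When $a=b=c$ the only nontrivial point is the identity
$$\langle C(X,Y),Z\rangle+\langle C(X,Z),Y\rangle=-2\langle U_a,X\rangle\langle Y,Z\rangle,$$
which exactly cancels the contribution of $X(\rho_a^2)=-2\rho_a^2\langle U_a,X\rangle$; in the mixed cases most terms vanish by orthogonality of distinct blocks and the survivors cancel in pairs. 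Since $\Phi$ is symmetric in $Y$ and $Z$, the cases $a=b\neq c$ and $a=c\neq b$ coincide, so only a short list of cases must be treated. I expect this bookkeeping — in particular keeping track of the components of the $U_i$ lying outside $E_i$ — to be the main, though entirely routine, obstacle, after which the conclusion $\bar\nabla=\nabla$ is immediate.
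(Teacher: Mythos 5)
Your proposal is correct. Note that the paper itself offers no proof of this proposition: it is imported verbatim from the reference \cite{MHM}, so there is no internal argument to compare against. Your route --- defining $\bar\nabla=\tilde\nabla+C$, observing that $C$ is $C^\infty(M)$-bilinear and symmetric (so $\bar\nabla$ is a torsion-free connection), and then invoking uniqueness of the Levi-Civita connection once metric compatibility is checked --- is the standard ``guess-and-verify'' alternative to deriving the formula directly from the Koszul formula applied to lifted fields, which is the other natural proof. The two key mechanisms you isolate are exactly right and the computations go through: the tensoriality of $\Phi(X,Y,Z)=X\langle Y,Z\rangle-\langle\bar\nabla_XY,Z\rangle-\langle Y,\bar\nabla_XZ\rangle$ legitimizes checking only pure lifts, and in the case $a=b=c$ one indeed gets $\langle C(X,Y),Z\rangle+\langle C(X,Z),Y\rangle=-2\langle U_a,X\rangle\langle Y,Z\rangle$ (the first and third terms of each summand cancel pairwise), which offsets $X(\rho_a^2)\,\langle Y,Z\rangle^{\sim}=-2\langle U_a,X\rangle\langle Y,Z\rangle$; in the case $b=c\neq a$ the term $X\langle Y,Z\rangle=-2\langle U_b,X\rangle\langle Y,Z\rangle$ is cancelled by the two surviving $-\langle X,U_b\rangle$ contributions of $C$, and in the cases $a=b\neq c$ and of three distinct indices everything vanishes by block orthogonality, with $\langle X,Y\rangle\langle U_a,Z\rangle$ cancelling against $-\langle Z,U_a\rangle\langle X,Y\rangle$ in the former. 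Your caution that $U_i$ need not be a section of $E_i$ is well placed --- the twist functions depend on all factors --- but, as you anticipate, none of the cancellations requires any projection of $U_i$, so the verification closes without further input. What your approach buys is a self-contained proof avoiding the six-term Koszul bookkeeping; what it costs is the case analysis, which you have organized correctly using the symmetry of $\Phi$ in $Y$ and $Z$.
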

It follows from \eqref{twisted-riemannian} that
 \begin{equation}\label{umbilical} (\nabla_{X_i}Y_i)_{E_i^{\perp}}=\langle X_i,Y_i \rangle (U_i)_{E_i^{\perp}}.\end{equation} 
 for all $X_i,Y_i \in E_i$. Therefore, $E_i$ is an umbilical distribution with mean curvature vector field $(U_i)_{E_i^{\perp}}$. It also follows immediately from \eqref{twisted-riemannian} that $E_{i}^{\perp}=\oplus_{\overset{j=0}{j \ne i}}^{k}E_{j}$ is integrable for all $0 \le i \le k$.

An orthogonal net $\mathcal{E}=({E}_i)_{i=0}^{k}$ on a Riemannian manifold $M$ is called a $\TP$-net if $E_i$ is umbilical and $E_i^{\perp}$ is integrable for all $i=0, \ldots, k$.
In particular, the product net of a twisted product is a $\TP$-net.

Let $\mathcal{F}=\{F_i\}_{i=0}^{k}$ be a net on a smooth manifold $N$. Then $\bar{\Phi}\colon M:=\prod_{i=0}^{k}M_i \to N$ is called a \emph{product representation} of $\mathcal{F}$ if $\bar{\Phi}$ is a diffeomorphism and $\bar{\Phi}_{*}E_i(x)=F_i(\bar{\Phi}(x))$ for each $x \in M$ and each $i=0, \ldots, k,$ where  $\mathcal{E}=(E_i)_{i=0}^{k}$ is the product net of $M$.

   The following de Rham-type decomposition theorem was proved in \cite{MHM}.

\begin{theorem}\label{decomposition-theorem}
\textit{Let $\mathcal{E}=(E_i)_{i=0}^{k}$ be a $\TP$-net on a Riemannian manifold $M$. Then, for each $x \in M$, there exists  a local product representation $\Bar{\Phi}:\prod_{i=0}^{k}M_i \to U$ of $\mathcal{E}$, with $x \in U \subset M$, which is an isometry with respect to a twisted product metric on $\prod_{i=0}^{k}M_i.$}
\end{theorem}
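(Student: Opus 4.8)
The plan is to separate the two kinds of information carried by a $\TP$-net and treat them independently: the purely differentiable assertion that $\mathcal{E}$ is locally a product net, which follows from integrability alone, and the metric assertion that the corresponding product metric is twisted, which follows from umbilicity alone.

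First I would record the integrability consequences of the hypotheses. Each $E_i$ is umbilical, hence integrable, and by definition each $E_i^\perp$ is integrable. Two observations are then immediate. A bracket computation shows that \emph{every} partial sum is integrable: for $X\in\Gamma(E_a)$, $Y\in\Gamma(E_b)$ with $a,b\in S$ and any $j\notin S$ one has $X,Y\in\Gamma(E_j^\perp)$, so the $E_j$-component of $[X,Y]$ vanishes, whence $\bigoplus_{i\in S}E_i$ is integrable for every $S\subseteq\{0,\dots,k\}$. Moreover, since a vector lying in $E_j^\perp$ for all $j\neq i$ can only have an $E_i$-component, one has
\[ E_i=\bigcap_{j\neq i}E_j^\perp, \qquad \bigcap_{i=0}^k E_i^\perp=\{0\}. \]

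Next I would build the local product representation purely from the foliations defined by the $E_i^\perp$. By the Frobenius theorem, near a given point the leaves of $E_i^\perp$ form a simple foliation with manifold leaf space $M_i$; write $\pi_i$ for the projection onto $M_i$, so that $\ker\pi_{i*}=E_i^\perp$. The identity $\bigcap_i E_i^\perp=\{0\}$ says exactly that the differential of $\Phi:=(\pi_0,\dots,\pi_k)$ is injective, and since $\sum_i\dim M_i=\sum_i\dim E_i=n$ it is an isomorphism; thus $\Phi$ is a local diffeomorphism onto $\prod_{i=0}^k M_i$. Its inverse $\bar\Phi$ is a product representation of $\mathcal{E}$, because $\bar\Phi_*$ carries the $i$-th factor distribution of the product onto $\bigcap_{j\neq i}\ker\pi_{j*}=\bigcap_{j\neq i}E_j^\perp=E_i$.

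It remains to identify the pulled-back metric. Writing $g=\langle\,,\,\rangle$ and using that $\mathcal{E}$ is orthogonal, I have $g=\sum_i g_i$ with $g_i:=g|_{E_i\times E_i}$. The key computation is that, for $Z\in\Gamma(E_i^\perp)$ and $X_i,Y_i\in\Gamma(E_i)$,
\[ (\mathcal{L}_Z g)(X_i,Y_i)=-2\langle\delta_i,Z\rangle\,g_i(X_i,Y_i), \]
where $\delta_i\in\Gamma(E_i^\perp)$ is the mean curvature vector field of $E_i$; this follows from metric compatibility of the Levi-Civita connection, the integrability of $E_i$ (so that $[X_i,Y_i]\in\Gamma(E_i)$), and umbilicity. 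Hence $\mathcal{L}_Z g_i$ is a pointwise scalar multiple of $g_i$, i.e. along each leaf of $E_i^\perp$ — equivalently, along each fiber of $\pi_i$ — the block $g_i$ varies only conformally. Integrating this along curves tangent to $E_i^\perp$ and comparing with the value of $g_i$ on a fixed slice transverse to these leaves produces a positive function $\rho_i$ and a metric $\langle\,,\,\rangle_i$ on $M_i$ with $g_i=\rho_i^2\,\pi_i^*\langle\,,\,\rangle_i$; summing over $i$ gives the desired twisted product metric $g=\sum_i\rho_i^2\,\pi_i^*\langle\,,\,\rangle_i$. The routine parts are the bracket identity and the Frobenius-type construction; the step I expect to be the crux is the metric one — recognizing that umbilicity is precisely the statement that each block metric rescales conformally along the complementary leaves, and checking that the resulting conformal factor integrates to a well-defined smooth positive twist function $\rho_i$ together with a genuine factor metric on $M_i$ independent of the complementary coordinates.
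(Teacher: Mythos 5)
The paper offers no proof of this statement to compare against: it is quoted verbatim from \cite{MHM}, and the authors simply cite that reference. Judged on its own, your proposal is a correct reconstruction of the standard argument behind the cited result, following the natural two-stage strategy: first the differentiable statement (Frobenius applied to the integrable distributions $E_i^{\perp}$, injectivity of $\Phi=(\pi_0,\dots,\pi_k)$ from $\bigcap_i E_i^{\perp}=\{0\}$, and the dimension count $\dim M_i=\dim E_i$), then the metric statement via umbilicity. Your bracket argument for integrability of partial sums and the identity $(\mathcal{L}_Z g)(X_i,Y_i)=-2\langle\delta_i,Z\rangle\, g(X_i,Y_i)$ for $Z\in\Gamma(E_i^{\perp})$ are both correct; note that the left-hand side is tensorial in $Z$ once restricted to $E_i\times E_i$, since $g(Z,\cdot)$ annihilates $E_i$, so the identity is genuinely pointwise. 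One step deserves more care than your phrasing suggests: the conclusion ``the block $g_i$ varies only conformally along each fiber of $\pi_i$'' cannot be read via the flow of an arbitrary $Z\in\Gamma(E_i^{\perp})$, because $E_i$ is in general \emph{not} invariant under such flows, so pulling $g_i$ back by the flow is not directly meaningful. The clean fix --- and what your integration step implicitly does --- is to evaluate the identity on the commuting coordinate fields furnished by the product representation already built in your second step: for such fields $[Z,\partial_a]=0$, hence $(\mathcal{L}_Z g)(\partial_a,\partial_b)=Z(g_{ab})$, and since the factor $-2\langle\delta_i,Z\rangle$ is independent of the pair $(a,b)$, every component of the $i$-th block satisfies the same linear ODE along the complementary directions; normalizing on a slice transverse to the leaves of $E_i^{\perp}$ then gives $g_{ab}=\rho_i^{2}\,h_{ab}(x_i)$ with $\rho_i>0$ smooth and $h$ a well-defined metric on $M_i$, exactly as you describe. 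With that reading, your outline is complete and matches the proof in \cite{MHM}.
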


\subsection{Moebius invariants for Euclidean submanifolds}\label{moebius-invariants}

Let $\mathbb{L}^{m+2}$ be the \emph{Minkowski space} of dimension $m+2$, that is, $\mathbb{R}^{m+2}$  endowed  with the inner-product $$ \langle v,w\rangle=-v_0w_0+v_1w_1+\ldots+v_{m+1}w_{m+1}$$
 for all $v=(v_0, \ldots, v_{m+1})$, $w=(w_0, \ldots, w_{m+1})\in \mathbb{R}^{m+2}$. 
The \emph{light cone} $\mathbb{V}^{m+1}$ of $\mathbb{L}^{m+2}$ is the upper half of 
  $$\{v \in \mathbb{L}^{m+2}: \langle v, v\rangle =0 \,\,\, \mbox{and} \,\,\, v \ne 0\},$$ 
restricted to which the inner-product of $\mathbb{L}^{m+2}$ is degenerate. Its subset
\begin{equation*}
    \mathbb{E}^{m}=\mathbb{E}^{m}_{w}=\{p \in \mathbb{V}^{m+1}: \langle p, w\rangle=1\},
\end{equation*} gives rise to a  model of the $m$-dimensional Euclidean space $\mathbb{R}^{m}$ for each  fixed $w\in \mathbb{V}^{m+1}$. Indeed,  for any  $p_0 \in \mathbb{E}^{m}$ and any linear isometry $C:\mathbb{R}^{m}\to (\spa \{p_0, w\})^{\perp}\subset \mathbb{L}^{m+2}$, the  map $\Psi=\Psi_{p_0, w, C}:\mathbb{R}^{m}\to \mathbb{L}^{m+2}$, given by \begin{equation}\Psi(x)=p_0+Cx-\frac{1}{2}||x||^{2}w,\end{equation} is an  isometric embedding such that $\Psi(\mathbb{R}^{m})=\mathbb{E}^{m}.$ The position vector field $\Psi$ is a light-like parallel normal vector field along $\Psi$, whose second fundamental form is $$\alpha^{\Psi}(X,Y)=-\langle X,Y\rangle w$$
for all $X,Y \in \mathfrak{X}(\mathbb{R}^{m})$.

  Now let $f:M^{n}\to \mathbb{R}^{m}$ be an immersion free of umbilical points. Then, the function $$\rho^{2}=\frac{n}{n-1}(||\alpha^{f}||^{2}-n||\mathcal{H}^{f}||^{2})$$ does not vanish on $M^{n}$, where $\mathcal{H}^{f}$ and $||\alpha^{f}||$ stand for the mean curvature vector field of $f$ and the norm of the second fundamental form of $f$, respectively. The \emph{Moebius lift} of $f$ is the immersion $F:M \to \mathbb{V}^{m+1}\subset \mathbb{L}^{m+2}$ given by $F=\rho \, \Psi \circ f$, whose induced  metric  is \begin{equation}\label{moebius-metric}\langle \,,\,\rangle^{*}=\rho^{2}\langle \,,\,\rangle_{f}.\end{equation} The metric \eqref{moebius-metric} is called the \emph{Moebius metric} determined by $f$. It was proved in \cite{Wang} that the Moebius metric is invariant under conformal transformations of $\mathbb{R}^{m}.$

\emph{The Moebius second fundamental form} of $f:M^{n}\to \mathbb{R}^{m}$ is the symmetric section $\beta=\beta^{f} \in \mbox{Hom}^{2}(TM, TM; N_{f}M)$ defined by $$\beta(X,Y)=\rho (\alpha^{f}(X,Y)-\langle X,Y\rangle \mathcal{H}^{f})$$ 
 for all  $ X,Y \in \mathfrak{X}(M)$. Notice that $\beta$ is traceless and that its norm  with respect to  $\langle \,,\,\rangle^{*}$ is $||\beta||_{*}=\sqrt{\frac{n-1}{n}}$. 
  Associated with $\beta$ one defines the  \emph{Moebius third fundamental form } $III_{\beta} : \mathfrak{X}(M)\times \mathfrak{X}(M)\to \mathbb{R}$  by \begin{equation}\label{thirdform}
	III_{\beta}(X,Y)=\sum_{i=1}^{n}\langle \beta(X,X_i), \beta(Y,X_i)\rangle,
\end{equation} where  $X_1, \ldots, X_n$ is an orthonormal frame with respect to $\langle \,,\,\rangle^{*}.$ 

The \emph{Blaschke tensor} $\psi=\psi^{f}$ of $f$ is the symmetric $C^{\infty}(M)$-bilinear form given by $$\psi(X,Y)=\frac{1}{\rho} \langle \beta^{f}(X,Y), \mathcal{H}^{f} \rangle+\frac{1}{2\rho^{2}}(||\grad\,^{*}  \rho||^{2}_{*}+||\mathcal{H}^{f}||^{2})\langle X,Y\rangle^{*} - \frac{1}{\rho} \Hess\,^{*}  \rho(X,Y)$$
and its  \emph{Moebius form} $\omega=\omega^{f}$  is the normal bundle valued one-form defined by $$\omega(X)=-\frac{1}{\rho}(\nabla^{\perp}_{X}\mathcal{H}^{f}+\beta(X, \grad\, ^{*}\rho)),$$ where $\grad^{*}$ and $\Hess^{*}$ denote the gradient and the Hessian on $(M^{n}, \langle\,,\,\rangle^{*}).$

\begin{proposition}[\cite{Wang}]\label{blaschke2}
\textit{The Blaschke tensor is given in terms of the Moebius metric and the Moebius third fundamental form by }  \begin{equation}
(n-2)\psi(X,Y)=\Ric^{*}(X,Y)+III_{\beta}(X,Y)-\frac{n^{2}s^{*}+1}{2n}\langle X,Y\rangle^{*},
\end{equation} \textit{for all $X,Y \in \mathfrak{X}(M)$, where $\Ric^{*}$ and $s^{*}=\frac{1}{n(n-1)}\tr\, \Ric^{*}$ are the Ricci curvature and the scalar curvature of $(M^{n}, \langle \,,\,\rangle^{*}).$ In particular}, \begin{equation}\tr\, \psi=\frac{n^{2}s^{*}+1}{2n}=\frac{n}{2}\langle \mathcal{H}^{F}, \mathcal{H}^{F}\rangle. \end{equation}
\end{proposition}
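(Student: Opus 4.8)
The plan is to prove Proposition \ref{blaschke2} by directly relating the Ricci curvature of the Moebius metric to the Blaschke tensor via the Gauss equation for the Moebius lift $F\colon M^n\to\mathbb{V}^{m+1}\subset\mathbb{L}^{m+2}$. The key observation is that $F=\rho\,\Psi\circ f$ is an isometric immersion into the light cone, so I would first compute its second fundamental form and normal bundle structure. Using the formula $\alpha^{\Psi}(X,Y)=-\langle X,Y\rangle w$ for the embedding $\Psi$, together with the conformal change of metric $\langle\,,\,\rangle^{*}=\rho^2\langle\,,\,\rangle_f$, I would express the second fundamental form of $F$ in terms of the Moebius second fundamental form $\beta$ and the position vector field. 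The point is that $\beta$ is precisely the trace-free part (after the conformal rescaling) of $\alpha^f$, so the normal component of $F$'s second fundamental form that lives in the image of $N_fM$ is governed by $\beta$, while the light-like directions contribute the terms that will assemble into the Blaschke tensor $\psi$.

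The main computational step is to write out the Gauss equation for $F$ in $\mathbb{L}^{m+2}$. Since $\mathbb{L}^{m+2}$ is flat, the intrinsic curvature tensor $R^{*}$ of $(M^n,\langle\,,\,\rangle^{*})$ equals the full contribution of the second fundamental form of $F$. Tracing this Gauss equation once gives $\Ric^{*}$ as a sum of three kinds of terms: a term quadratic in $\beta$ which, upon tracing, produces exactly $III_{\beta}(X,Y)=\sum_i\langle\beta(X,X_i),\beta(Y,X_i)\rangle$; a term coming from the interaction of $\beta$ with the light-like normal directions, which is where the Blaschke tensor $\psi$ enters through its definition involving $\langle\beta^f(X,Y),\mathcal{H}^f\rangle$, the Hessian $\Hess^{*}\rho$, and the gradient term; and a multiple of the metric $\langle X,Y\rangle^{*}$ coming from the pairing of the light-like normal vectors, whose coefficient will be the scalar term $\frac{n^2s^{*}+1}{2n}$. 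Collecting these and solving for $\psi$ yields the stated identity $(n-2)\psi(X,Y)=\Ric^{*}(X,Y)+III_{\beta}(X,Y)-\frac{n^2s^{*}+1}{2n}\langle X,Y\rangle^{*}$.

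I expect the main obstacle to be the careful bookkeeping of the light-cone geometry, specifically handling the degenerate induced metric on $\mathbb{V}^{m+1}$ and the fact that $F$ itself (the position vector) is a null normal direction. Because the inner product restricted to $\mathbb{V}^{m+1}$ is degenerate, one must work with a frame of $N_FM$ in $\mathbb{L}^{m+2}$ that includes two null directions (one being $F$ and a conjugate null vector $\mathcal{H}^F$) together with the spacelike directions inherited from $N_fM$. Tracking which terms in the Gauss equation pair two null vectors, a null with a spacelike, or two spacelike vectors is the delicate part, and it is precisely these pairings that distribute into the three summands of the formula. The identity $\tr\,\psi=\frac{n^2s^{*}+1}{2n}=\frac{n}{2}\langle\mathcal{H}^F,\mathcal{H}^F\rangle$ then follows by taking the full trace of the main formula, using $\tr\,\Ric^{*}=n(n-1)s^{*}$, the fact that $\tr\,III_{\beta}=\|\beta\|_{*}^2=\frac{n-1}{n}$ since $\beta$ is traceless with the stated norm, and identifying the length of $\mathcal{H}^F$ with the null-pairing coefficient computed above.
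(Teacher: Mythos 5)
Your plan is sound, but note first that the paper does not prove this proposition at all: it is quoted from \cite{Wang}, as is the conformal Gauss equation \eqref{G} of Proposition \ref{prop1}. Your light-cone route is essentially Wang's original derivation, and all of its ingredients check out: writing $\alpha^{F}(X,Y)=\beta(X,Y)-\psi(X,Y)F-\langle X,Y\rangle^{*}\zeta$ for a null normal $\zeta$ with $\langle F,\zeta\rangle=1$ (the $F$-component is pinned down by $\langle \alpha^{F}(X,Y),F\rangle=-\langle X,Y\rangle^{*}$, which follows from $\langle F,F\rangle=0$), the flat Gauss equation of $F$ reproduces \eqref{G}, and a single trace then gives
\begin{equation*}
\Ric^{*}(X,W)=-III_{\beta}(X,W)+(n-2)\psi(X,W)+(\tr\,\psi)\langle X,W\rangle^{*},
\end{equation*}
using $\tr\,\beta=0$; a second trace with $\tr\,III_{\beta}=\|\beta\|_{*}^{2}=\frac{n-1}{n}$ yields $\tr\,\psi=\frac{n^{2}s^{*}+1}{2n}$, and $\mathcal{H}^{F}=-\frac{\tr\,\psi}{n}F-\zeta$ gives $\langle \mathcal{H}^{F},\mathcal{H}^{F}\rangle=\frac{2}{n}\tr\,\psi$, exactly as stated. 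Two remarks on efficiency and on where the real work sits. First, since the paper already records \eqref{G} as Proposition \ref{prop1}(i), the shortest proof of the displayed formula is just the one-step trace above, with no light-cone geometry needed; your plan in effect re-proves Proposition \ref{prop1}(i) en route, which is more self-contained but redundant here. Second, the one genuinely nontrivial step your sketch passes over quickly is the identification of the $F$-component of $\alpha^{F}$ with the \emph{analytically defined} Blaschke tensor of Section \ref{moebius-invariants} (the expression involving $\frac{1}{\rho}\langle \beta^{f}(X,Y),\mathcal{H}^{f}\rangle$, $\Hess^{*}\rho$ and the gradient term): this requires differentiating $F=\rho\,\Psi\circ f$ twice, using $\alpha^{\Psi}(X,Y)=-\langle X,Y\rangle w$ and the conformal change of connection between $\langle\,,\,\rangle_{f}$ and $\langle\,,\,\rangle^{*}$. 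You correctly name this as the main computational step, so the proposal is a valid plan rather than a complete proof, but there is no gap in the strategy itself.
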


\begin{proposition}[\cite{Wang}]\label{prop1}
\textit{The following equations hold:}
	
\begin{description}
	\item[(i)] \textit{The conformal Gauss equation}: \begin{align}\label{G}
	\langle R^{*}(X,Y)Z,W\rangle^{*}=&\langle \beta(X,W), \beta(Y,Z)\rangle - \langle \beta(X,Z), \beta(Y,W)\rangle\nonumber\\ 
	&+\psi(X,W)\langle Y,Z \rangle^{*}+\psi(Y,Z)\langle X,W\rangle^{*}\nonumber\\
	&-\psi(X,Z)\langle Y,W \rangle^{*}-\psi(Y,W)\langle X,Z \rangle^{*}
	\end{align} \textit{for all} $X,Y,Z,W \in \mathfrak{X}(M)$.
\item[(ii)] \textit{The Codazzi conformal equations}:
\begin{equation}\label{C1}
	({^{f}\nabla^{\perp}_{X}}\beta)(Y,Z)-({^{f}\nabla^{\perp}_{Y}}\beta)(X,Z)=\omega((X \wedge Y)Z)
\end{equation} \textit{and} \begin{equation}\label{C2}
(\nabla^{*}_{X}\psi)(Y,Z)-(\nabla^{*}_{Y}\psi)(X,Z)=\langle \omega(Y), \beta(X,Z)\rangle-\langle \omega(X), \beta(Y,Z)\rangle
\end{equation} \textit{for all} $ X,Y,Z \in \mathfrak{X}(M),$ \textit{where} $$ ({^{f}\nabla^{\perp}_{X}}\beta)(Y,Z)={^{f}\nabla^{\perp}_{X}}\beta(Y,Z)-\beta(\nabla^{*}_{X}Y,Z)-\beta(Y, \nabla^{*}_{X}Z),$$  $$(\nabla^{*}_{X}\psi)(Y,Z)=X(\psi(Y,Z))-\psi(\nabla^{*}_{X}Y, Z)-\psi(Y, \nabla^{*}_{X}Z)$$ \textit{and}
$$(X \wedge Y)Z=\langle Y,Z\rangle^{*} X-\langle X,Z\rangle^{*} Y.$$ 
\item[(iii)] \textit{The conformal Ricci equations}: \begin{equation}\label{R1}
	d\omega(X,Y)=\beta(Y, \hat{\psi}X)-\beta(X, \hat{\psi}Y)
\end{equation} \textit{and} \begin{equation}\label{R2}
\langle R^{\perp}(X,Y)\xi, \eta \rangle=\langle [B_{\xi}, B_{\eta}]X,Y\rangle^{*}
\end{equation} \textit{for all} $ X,Y \in \mathfrak{X}(M)$ \textit{and} $\xi, \eta \in \Gamma(N_{f}M)$, \textit{with} $ \hat{\psi} \in \Gamma(\End(TM))$ \textit{given by} \begin{equation*}
\langle \hat{\psi}X,Y\rangle^{*}=\psi(X,Y).
\end{equation*}
\end{description}	
\end{proposition}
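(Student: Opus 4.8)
The plan is to exploit the fact that the Moebius lift $F=\rho\,\Psi\circ f\colon M^n\to\mathbb{V}^{m+1}\subset\mathbb{L}^{m+2}$ is an isometric immersion of $(M^n,\langle\,,\,\rangle^*)$ into the \emph{flat} Minkowski space $\mathbb{L}^{m+2}$, so that the Gauss, Codazzi and Ricci equations of $F$ carry no ambient curvature term and the three conformal equations become mere components of these. First I would record the elementary facts $\langle F,F\rangle=\rho^2\langle\Psi,\Psi\rangle=0$ and $\langle F,dF\rangle=0$, which exhibit the position vector $F$ as a \emph{light-like} section of $N_FM$. I would then complete $F$ to a pseudo-orthonormal frame of $N_FM$ by adjoining a companion light-like field $N=\mathcal{H}^F-\tfrac1n(\tr\psi)\,F$, for which $\langle N,N\rangle=0$ and $\langle F,N\rangle=1$, together with an isometric copy of the original normal bundle $N_fM$. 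This yields the orthogonal decomposition of $N_FM$ into the hyperbolic plane $\spa\{F,N\}$ and a Riemannian part isometric to $N_fM$.

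The second task is to translate the invariants of $F$ into the Moebius invariants of $f$. A direct computation of $dF$ and of the ambient derivatives of the frame gives the key formula $\alpha^F(X,Y)=\beta(X,Y)+\psi(X,Y)\,F+\langle X,Y\rangle^*\,N$, in which the $N_fM$-component is exactly the Moebius second fundamental form $\beta$, the coefficient of $F$ is the Blaschke tensor $\psi$, and the coefficient of $N$ is forced to be $\langle X,Y\rangle^*$ by the trace-free normalization of $\beta$ together with $\tr\psi=\tfrac{n}{2}\langle\mathcal{H}^F,\mathcal{H}^F\rangle$. In particular the shape operators satisfy $A_F=\Id$ and $A_N=\hat\psi$, while $A_\eta=B_\eta$ for $\eta\in\Gamma(N_fM)$. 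I would likewise show that the normal connection $\nabla^\perp$ of $F$ restricts to ${}^f\nabla^\perp$ on the $N_fM$-factor, whereas its mixed components, namely those coupling $\spa\{F,N\}$ to $N_fM$, are governed precisely by the Moebius form $\omega$.

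With these identifications in place, each conformal equation is obtained by reading off a slot of the corresponding ambient-flat structure equation for $F$. Expanding the Gauss equation $\langle R^*(X,Y)Z,W\rangle^*=\langle\alpha^F(X,W),\alpha^F(Y,Z)\rangle-\langle\alpha^F(X,Z),\alpha^F(Y,W)\rangle$ with the displayed formula for $\alpha^F$ produces the $\beta$ terms from the $N_fM$--$N_fM$ pairing, while the cross pairings of the $\beta$, $\psi$ and $\langle\,,\,\rangle^*$ parts with the light-like directions, via $\langle F,N\rangle=1$ and $\langle F,F\rangle=\langle N,N\rangle=0$, produce exactly the six $\psi$ terms of \eqref{G}. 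Projecting the Codazzi equation $(\nabla_X\alpha^F)(Y,Z)-(\nabla_Y\alpha^F)(X,Z)=0$ onto $N_fM$ gives \eqref{C1}, with $\omega$ appearing from the mixed connection terms, while its $F$-component yields \eqref{C2}; projecting the Ricci equation $\langle R^\perp(X,Y)\xi,\eta\rangle=\langle[A_\xi,A_\eta]X,Y\rangle^*$ onto the mixed slot $\xi=N,\ \eta\in N_fM$ gives \eqref{R1} (using $A_N=\hat\psi$), and onto the $N_fM$--$N_fM$ slot gives \eqref{R2}.

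I expect the principal obstacle to be the careful differentiation along the light-like normal directions: because $\spa\{F,N\}$ is a hyperbolic plane inside a Lorentzian bundle, one must track the ambient derivatives of both $F$ and $N$ with respect to a pseudo-orthonormal, rather than orthonormal, normal frame, and it is exactly these terms that have to be reorganized into $\psi$, $\omega$ and $\hat\psi$. Establishing the formula for $\alpha^F$ and the precise form of the mixed normal connection is therefore the crux; once this is done, the six equations follow by straightforward bookkeeping.
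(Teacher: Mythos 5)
Your proposal is correct and follows essentially the same route as the source the paper cites for this proposition (the paper itself gives no proof, quoting it from \cite{Wang}; see also Chapter 9 of \cite{DT}): one views the Moebius lift $F$ as an isometric immersion of $(M^n,\langle\,,\,\rangle^*)$ into the flat space $\mathbb{L}^{m+2}$, splits $N_FM$ into the null plane $\spa\{F,N\}$ plus a parallel copy of $N_fM$, identifies the components of $\alpha^F$ and of the normal connection with $\beta$, $\psi$, $\hat\psi$ and $\omega$, and reads the three conformal equations off the flat Gauss, Codazzi and Ricci equations. The only caveat is a sign-convention slip: since $\tilde\nabla_XF=F_*X$, the Weingarten convention forces $A_F=-\Id$ and $\langle F,\mathcal{H}^F\rangle=-1$, so the null companion is $N=\mathcal{H}^F+\frac{1}{n}(\tr\psi)F$ with $\langle F,N\rangle=-1$ (consistent with $\tr\psi=\frac{n}{2}\langle\mathcal{H}^F,\mathcal{H}^F\rangle$ from Proposition \ref{blaschke2}), and the signs in your decomposition of $\alpha^F$ adjust accordingly --- harmless bookkeeping that leaves all six equations unchanged.
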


\begin{theorem}[\cite{Wang}]
    \textit{Let $f,g:M^{n}\to \mathbb{R}^{m}$, $n \geq 2$, be immersions free of umbilical points. Then there exists a conformal transformation $\tau:\mathbb{R}^{m}\to \mathbb{R}^{m}$ such that $g=\tau \circ f$ if and only if $f$ and $g$ share the same Moebius metric and there exists a vector bundle isometry $\mathcal{T}:N_{f}M \to N_{g}M$ such that $$\mathcal{T} {^{f}\nabla^{\perp}}={^{g}\nabla^{\perp}}\mathcal{T} \quad \mbox{and} \quad  \mathcal{T} \circ \beta^{f}=\beta^{g}.$$}
\end{theorem}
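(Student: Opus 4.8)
The plan is to pass to the Moebius lifts $F=\rho\,\Psi\circ f$ and $G=\rho'\,\Psi'\circ g$, which are isometric immersions of the common manifold $(M^n,\langle\,,\,\rangle^*)$ into the flat Lorentzian space $\mathbb{L}^{m+2}$, and to reduce the statement to the fundamental theorem of submanifolds of a space form applied to the flat space $\mathbb{L}^{m+2}$, using the classical correspondence between the orthochronous Lorentz group $O_+(m+1,1)$ preserving the light cone $\mathbb{V}^{m+1}$ and the group of conformal transformations of $\mathbb{R}^m=\mathbb{E}^m_w$. The direction from a conformal congruence to the invariant data is immediate: by the conformal invariance of the Moebius metric recalled after \eqref{moebius-metric} the two immersions share $\langle\,,\,\rangle^*$, and a conformal $\tau$ is realized by an element of $O_+(m+1,1)$ preserving $\mathbb{V}^{m+1}$, whose restriction to the normal bundle furnishes the required bundle isometry $\mathcal{T}$ intertwining $\beta$ and $\nabla^\perp$. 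So I concentrate on the converse.

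First I would record the extrinsic geometry of the lift $F$ in $\mathbb{L}^{m+2}$. The position vector $F$ is a light-like normal field with $\langle F,F\rangle=0$ and ${}^F\nabla^\perp F=0$; there is a canonical normalization $\langle F,\mathcal{H}^F\rangle=-1$, the subbundle $\spa\{F,\mathcal{H}^F\}$ is a Lorentzian plane, and its orthogonal complement in $N_FM$ is a rank-$p$ Riemannian subbundle canonically isometric to $N_fM$ via a bundle isometry $j_f$. The central computation is the structural formula
\[ \alpha^F(X,Y)=j_f\beta(X,Y)+\langle X,Y\rangle^*\,\mathcal{H}^F-\psi(X,Y)\,F, \]
together with the identification of ${}^F\nabla^\perp$ restricted to $j_f(N_fM)$ with ${}^f\nabla^\perp$ and of the $j_f(N_fM)$-component of ${}^F\nabla^\perp_X\mathcal{H}^F$ with $j_f\omega^f(X)$. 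This is the dictionary translating the Euclidean Moebius invariants $\beta,\psi,\omega$ into the second fundamental form and normal connection of the lift.

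Given the hypotheses I would then build $\hat{\mathcal{T}}\colon N_FM\to N_GM$ by $F\mapsto G$, $\mathcal{H}^F\mapsto\mathcal{H}^G$ and $\hat{\mathcal{T}}\circ j_f=j_g\circ\mathcal{T}$ on the Riemannian parts, and verify that it is a bundle isometry preserving $\alpha$ and $\nabla^\perp$. Isometry on the Lorentzian plane reduces to $\langle\mathcal{H}^F,\mathcal{H}^F\rangle=\langle\mathcal{H}^G,\mathcal{H}^G\rangle$, which holds because by Proposition \ref{blaschke2} this quantity depends only on $s^*$, hence on the shared metric. Preservation of $\alpha$ reduces, via the structural formula, to the equality $\psi^f=\psi^g$ of Blaschke tensors; again by Proposition \ref{blaschke2} together with \eqref{thirdform}, for $n\ge 3$ the Blaschke tensor is determined by $\Ric^*$, $s^*$ and $III_\beta$, all of which agree since $\mathcal{T}$ is an isometry carrying $\beta^f$ to $\beta^g$. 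Preservation of $\nabla^\perp$ reduces to $\mathcal{T}\omega^f=\omega^g$, which I would extract from the conformal Codazzi equation \eqref{C1}: applying $\mathcal{T}$ and using that it intertwines $\beta$ and $\nabla^\perp$ forces $\mathcal{T}\omega^f((X\wedge Y)Z)=\omega^g((X\wedge Y)Z)$, and since $(X\wedge Y)Z$ exhausts $TM$ for $n\ge 2$ the claim follows. The fundamental theorem of submanifolds in $\mathbb{L}^{m+2}$ then yields an ambient isometry $\mathcal{C}$ with $G=\mathcal{C}\circ F$ realizing $\hat{\mathcal{T}}$ on $N_FM$; because $F$ is simultaneously the position vector and a normal field with $\hat{\mathcal{T}}F=G$, the translational part of $\mathcal{C}$ vanishes, so $\mathcal{C}\in O(m+1,1)$ preserves $\mathbb{V}^{m+1}$ and, taken orthochronous, descends to the desired conformal transformation $\tau$ of $\mathbb{R}^m$ with $g=\tau\circ f$.

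The hard part will be the structural computation of $\alpha^F$ and ${}^F\nabla^\perp$ and the careful bookkeeping of the normalization $\langle F,\mathcal{H}^F\rangle=-1$, since everything downstream hinges on this dictionary being exact; the two derived equalities $\psi^f=\psi^g$ and $\mathcal{T}\omega^f=\omega^g$ are then comparatively routine consequences of Proposition \ref{blaschke2} and Proposition \ref{prop1}. A separate point requiring attention is the borderline case $n=2$, where the coefficient $n-2$ in Proposition \ref{blaschke2} degenerates and the identity $\psi^f=\psi^g$ can no longer be read off from the intrinsic Ricci data, so the Blaschke tensors would have to be compared by a direct argument.
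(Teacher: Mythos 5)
You are measuring this against a proof the paper does not actually contain: the theorem is imported from \cite{Wang} (cf.\ Theorem 9.22 of \cite{DT}), so the only comparison available is with the proof in those sources. Your proposal reconstructs that standard proof faithfully: the light-cone lift, the dictionary between $(\beta,\psi,\omega)$ and the fundamental data $(\alpha^F,{}^F\nabla^{\perp})$ of $F$, the rigidity part of the fundamental theorem of submanifolds in $\mathbb{L}^{m+2}$, and the correspondence between orthochronous Lorentz transformations preserving $\mathbb{V}^{m+1}$ and conformal transformations of $\mathbb{E}^{m}_{w}$ are exactly the route of \cite{DT} (Wang's original argument is by moving frames but is the same in substance). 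Your normalization $\langle F,\mathcal{H}^{F}\rangle=-1$ is correct (differentiate $\langle F,F\rangle=0$ twice), and the two reductions you isolate are the right ones: $\psi^{f}=\psi^{g}$ via Proposition \ref{blaschke2} and \eqref{thirdform}, and $\mathcal{T}\omega^{f}=\omega^{g}$ via \eqref{C1}, as is the observation that $\hat{\mathcal{T}}F=G$ kills the translational part. One local slip: your structural formula is trace-inconsistent as written, since tracing $\alpha^{F}(X,Y)=j_{f}\beta(X,Y)+\langle X,Y\rangle^{*}\mathcal{H}^{F}-\psi(X,Y)F$ forces $\tr\,\psi=0$, contradicting $\tr\,\psi=\frac{n}{2}\langle\mathcal{H}^{F},\mathcal{H}^{F}\rangle$ from Proposition \ref{blaschke2}. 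The $F$-component must be the traceless part of $\psi$; equivalently, replace $\mathcal{H}^{F}$ by the null normal $\zeta=\mathcal{H}^{F}+\frac{\tr\,\psi}{n}F$, the unique null section orthogonal to $j_{f}(N_{f}M)$ with $\langle F,\zeta\rangle=-1$, after which the $F$-component is $\psi$ itself. This is precisely the bookkeeping you flagged, and it is harmless downstream because $\tr\,\psi$ is determined by the shared scalar curvature $s^{*}$.

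The genuine gap is the one you concede: $n=2$, which the statement as quoted includes. It is not merely that your method of reading $\psi$ off $\Ric^{*}+III_{\beta}$ degenerates with the factor $n-2$; for $n=2$ the Blaschke tensor is genuinely \emph{not} determined by $(\langle\,,\,\rangle^{*},\beta,\nabla^{\perp})$, so no ``direct argument'' comparing Blaschke tensors from these hypotheses alone can exist. Concretely, for an umbilic-free surface in $\mathbb{R}^{3}$ the constraints $\tr\,\beta=0$ and $||\beta||_{*}^{2}=\frac{1}{2}$ force $\beta$ to have $\langle\,,\,\rangle^{*}$-eigenvalues $\pm\frac{1}{2}$ along the curvature directions, so the hypotheses reduce to an isometry of Moebius metrics matching curvature-line nets; classically (Calapso, Cartan's theory of conformally applicable surfaces), the $T$-transforms of an isothermic surface preserve exactly this second-order data---the normal-bundle condition being vacuous in codimension one---while shifting the Blaschke tensor, and the surfaces in such a family (e.g.\ the cylinder, cone and rotational surfaces over corresponding profiles, the very trichotomy appearing in this paper) are not Moebius congruent. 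In Wang's formulation the two-dimensional congruence statement carries the Blaschke tensor (and Moebius form) as part of the data, and the reduction to the data listed in the present statement is available only for $n\geq 3$. So your argument proves the theorem for $n\geq3$, which is all that is ever used in this paper ($n\geq5$, resp.\ $n\geq3$, in the applications), but it does not, and cannot without amending the hypotheses, cover $n=2$ as literally quoted.
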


\subsection{Submanifolds with flat normal bundle}

An isometric immersion $f:M^{n}\to \mathbb{Q}^{m}_{c}$ is said to have \emph{flat normal bundle} if the curvature tensor of its normal bundle vanishes identically. It is well-known  (see \cite{Re}) that, under this assumption, at each $x \in M^{n}$  the tangent space $T_{x}M$ decomposes orthogonally as 
   $T_{x}M=E_{\eta_1}(x)\oplus \ldots E_{\eta_s}(x)$,
   %\begin{equation}\label{dec}T_{x}M=E_{\eta_1}(x)\oplus \ldots E_{\eta_s}(x),\end{equation} 
   where $s=s(x)$ and $\eta_1, \ldots, \eta_s $ are the distinct principal normal vectors of $f$ at $x$. If $s(x)=k\in \mathbb{N}$ for all $x\in M^n$, then $f$ is said to be \emph{proper}. In this case, the Codazzi equations of $f$ are  equivalent to  \begin{align}
%\nabla^{\perp}_{T}\eta_i&=0 \quad \forall \,\,\, T \in E_{\eta_i}, \,\,\,\, \mbox{se} \,\, \dim E_{\eta_i} \geq %2,\\
    \langle X_j,Y_j\rangle \nabla^{\perp}_{X_i}\eta_j&=\langle \nabla_{X_j}Y_j, X_i \rangle (\eta_j-\eta_i),\\
    \langle \nabla_{X_j}X_i, X_l \rangle (\eta_i-\eta_l)&=\langle \nabla_{X_i}X_j, X_l \rangle(\eta_j-\eta_l)\label{codazzi3},
\end{align}  if $X_i \in \Gamma(E_{\eta_i})$, $X_j,Y_j \in \Gamma(E_{\eta_j})$ and $X_l \in \Gamma(E_{\eta_l})$ for $1 \le i \ne j \ne l \ne i \le k$.    
 By the Gauss equation, the sectional curvature of $M^n$ at $x$ along the plane $\sigma$ spanned by  $X\in E_{\eta_i}$ and $Y\in E_{\eta_j}$ is 
 \begin{equation}\label{sec} K(\sigma)=c+\langle \eta_i(x), \eta_j(x)\rangle.
 \end{equation}

Let $f:M^{n}\to \mathbb{R}^{n+p}$ be a proper isometric immersion  with flat normal bundle and closed Moebius form. Let $\eta_1, \ldots, \eta_k$ be the principal normal vector fields of $f$ associated with the smooth distributions $E_{\eta_1}, \ldots, E_{\eta_k}$. 
 Given unit vector fields $X_i\in \Gamma(E_{\eta_i})$  and $X_j\in \Gamma(E_{\eta_j})$,
since $\langle \,,\,\rangle^{*}=\rho^{2}\langle \,,\,\rangle_{f}$ and $\beta= \rho(\alpha^{f}-\langle \,,\,\rangle_f\mathcal{H}^{f})$, then $\bar{X}_i=\rho^{-1}X_i$ and $\bar{X}_j=\rho^{-1}X_j$ are unit vector fields with respect to $\langle \,,\,\rangle^{*}$ such that  \begin{align*}
    \beta(\bar{X}_i, \bar{X}_j)&=\rho (\rho^{-2}\alpha^{f}(X_i,X_j)-\rho^{-2}\delta_{ij}\mathcal{H}^{f})\\
    &=\delta_{ij}\rho^{-1}(\eta_i-\mathcal{H}^{f}).
\end{align*} 
We call the normal vector fields $\bar{\eta}_i=\rho^{-1}(\eta_i-\mathcal{H}^{f})$, $1 \le i \le k$, the \emph{Moebius principal normal vector fields} of $f$.

\section{Examples}\label{ex}

 We start this section by computing the Moebius metric of an isometric immersion $f:M^{n}\to \mathbb{R}^{n+p}$ that is constructed as in Theorem \ref{cor:dupinpcn} in terms of an isometric   immersion $g:M^{p-\ell}_{\tilde{c}}\to \mathbb{Q}^{2p-\ell}_{\tilde{c}}$,   $0 \le \ell \le p-1$, of a Riemannian manifod with  constant sectional curvature $\tilde{c}$ that has flat normal bundle and vanishing index of relative nullity. Then we establish a condition that the mean curvature vector field $\mathcal{H}^{g}$ of  $g$ must satisfy in order that $f$ has constant Moebius curvature $c$. 

 Observe that, if  $g:M^{p-\ell}_{\tilde{c}}\to \mathbb{Q}^{2p-\ell}_{\tilde{c}}$, $0 \le \ell \le p-1$,  is an isometric immersion with flat normal bundle and vanishing index of relative nullity,  by the Gauss equation \eqref{sec} there exist $p-\ell$ distinct nowhere-vanishing pairwise orthogonal principal normal vector fields $\eta_1, \ldots, \eta_{p-\ell}$. 
 
 \begin{examples}\label{cilindro} 
$(i)$ \emph	{Let $g:M^{p-\ell}\to \mathbb{R}^{2p-\ell}$, $0 \le \ell \le p-1$, be an immersion with flat normal bundle, vanishing index of relative nullity and flat induced metric $ds^{2}$. By the \emph{cylinder in $\mathbb{R}^{n+p}$ over $g$} we mean the immersion given by 
 \begin{equation*}
 		f=g \times \Id:M^{p-\ell}\times \mathbb{R}^{n-p+\ell}\to \mathbb{R}^{2p-\ell}\times \mathbb{R}^{n-p+\ell}=\mathbb{R}^{n+p},
 	\end{equation*} 
where $\Id:\mathbb{R}^{n-p+\ell}\to \mathbb{R}^{n-p+\ell}$ is the identity map. Let $\eta_1, \ldots, \eta_{p-\ell}$ be the nowhere vanishing pairwise orthogonal principal normal vector fields of $g$ and let $X_1, \ldots, X_{p-\ell}$ be an orthonormal frame of $(M^{p-\ell}, ds^{2})$ such that $ \alpha^{g}(X_i,X_j)=\delta_{ij}\eta_i$ for $1 \le i, j \le p-\ell$.
 	Then 
 $$\rho^{2}=\frac{n}{n-1}(||\alpha^{f}||^{2}-n||\mathcal{H}^{f}||^{2})=\frac{n}{n-1}(||\alpha^{g}||^{2}-\frac{1}{n}||\alpha^{g}||^{2})=||\alpha^{g}||^{2}=(p-\ell)^{2}||\mathcal{H}^{g}||^{2}.$$
 	The induced metric by $f$ is $\langle \,,\,\rangle_{f}=ds^{2}+du_{p-\ell+1}^{2}+\ldots+du_{n}^{2}$,
 where $(u_{p-\ell+1}, \ldots, u_n)$ are the canonical coordinates on $\mathbb{R}^{n-p+\ell}$. Thus the Moebius metric determined by $f$ is  \begin{equation*}\label{moebiuseuclideo}\langle\,,\,\rangle^{*}=(p-\ell)^{2}||\mathcal{H}^{g}||^{2}(ds^{2}+du_{p-\ell+1}^{2}+\ldots+du_n^{2}).\end{equation*}}
\noindent $(ii)$ \emph{ Let $g:M^{p-\ell}\to \mathbb{S}^{2p-\ell}$, $0 \le \ell \le p-1$, be an isometric immersion with flat normal bundle, vanishing index of relative nullity and induced metric $ds^{2}$ of constant sectional curvature $1$.
	The \emph{generalized cone in $\mathbb{R}^{n+p}$ over $g$} is the immersion $f\colon M^{p-\ell}\times \mathbb{H}^{n-p+\ell}\to \mathbb{R}^{n+p}$ given by 
	\begin{equation*} f(x,z)=\Theta \circ (g,\Id)(x,z)=(z_1g(x), z_2, \ldots, z_{n-p+\ell}),\end{equation*} 
where $\mathbb{H}^{n-p+\ell}=\mathbb{R}_{+}\times \mathbb{R}^{n-p+\ell-1}$ is endowed with the hyperbolic metric $$dz^{2}=\frac{1}{z_1^{2}}(dz_1^{2}+\ldots+dz_{n-p+\ell}^{2}),$$ $\Id:\mathbb{H}^{n-p+\ell}\to \mathbb{H}^{n-p+\ell}$ is the identity map and $\Theta:\mathbb{S}^{2p-\ell}\times \mathbb{H}^{n-p+\ell}\to \mathbb{R}^{n+p}$ is the conformal diffeomorphism defined by $$ \Theta(y,z)=(z_1y, z_2, \ldots, z_{n-p+\ell}),$$ where its conformal factor is $z_1.$ The induced metric by  $f$ is \begin{equation*}
		\langle \,,\,\rangle_{f}=z_1^{2}(ds^{2}+dz^{2}).
	\end{equation*}}
	\emph{Let $\eta_1, \ldots, \eta_{p-\ell}$ be the nowhere vanishing pairwise orthogonal principal normal vector fields of $g$ and let $X_1, \ldots, X_{p-\ell}$ be an orthonormal frame of $(M^{p-\ell}, ds^{2})$ such that $ \alpha^{g}(X_i,X_j)=\delta_{ij}\eta_i$ for $1 \le i, j \le p-\ell$. The second fundamental form of $f$ is given by }
	\begin{equation*}
		\alpha^{f}\left(\frac{X_i}{z_1}, \frac{X_j}{z_1}\right)=\frac{1}{z_1}\alpha^{g}(X_i,X_j), \quad \alpha^{f}\left(\frac{d}{dz_1}, \frac{d}{dz_1}\right)=0=\alpha^{f}(\frac{d}{dz_1}, \frac{X_i}{z_1}).
	\end{equation*}
\emph{	Thus
	\begin{align*}
		\rho^{2}&=\frac{n}{n-1}(||\alpha^{f}||^{2}-n||\mathcal{H}^{f}||^{2})\\
		&=\frac{n}{n-1}(\sum_{i=1}^{p-\ell}||\alpha^{f}(\frac{X_i}{z_1},\frac{X_i}{z_1})||^{2}-n||\frac{1}{n}\sum_{i=1}^{p-\ell}\alpha^{f}(\frac{X_i}{z_1}, \frac{X_i}{z_1})||^{2})\\
		&=\frac{n}{n-1}(\frac{1}{z_1^{2}}\sum_{i=1}^{p-\ell}||\alpha^{g}(X_i,X_i)||^{2}-\frac{1}{n}||\frac{1}{z_1}\sum_{i=1}^{p-\ell}\alpha^{g}(X_i,X_i)||^{2})\\
		&=\frac{1}{z_1^{2}}\frac{n}{n-1}(\sum_{i=1}^{p-\ell}||\alpha^{g}(X_i,X_i)||^{2}-\frac{1}{n}\sum_{i=1}^{p-\ell}||\alpha^{g}(X_i,X_i)||^{2})\\
		&=\frac{1}{z_1^{2}}\sum_{i=1}^{p-\ell}||\alpha^{g}(X_i,X_i)||^{2}=\frac{1}{z_1^{2}}(p-\ell)^{2}||\mathcal{H}^{g}||^{2}.
	\end{align*}}\emph{Therefore, \begin{equation*}
		\rho=\frac{(p-\ell)||\mathcal{H}^{g}||}{z_1},
	\end{equation*} and  the Moebius metric determined by $f$ is \begin{equation*}\label{moebius-metric-esfera}
		\langle\,,\,\rangle^{*}=(p-\ell)^{2}||\mathcal{H}^{g}||^{2}(ds^{2}+dz^{2}).
	\end{equation*}}
\emph{	$(iii)$ 
Let $g:M^{p-\ell}\to \mathbb{H}^{2p-\ell}$, $0 \le \ell \le p-1$, be an isometric immersion with flat normal bundle, vanishing index of relative nullity  and whose induced metric $ds^{2}$ has constant sectional curvature $-1,$ where $$\mathbb{H}^{2p-\ell}=\{(z_1, \ldots, z_{2p-\ell}): z_{2p-\ell}>0\} $$ is endowed with the hyperbolic metric $$ dz^{2}=\frac{1}{z_{2p-\ell}^{2}}(dz_1^{2}+\ldots+dz_{2p-\ell}^{2}).$$}
\emph{ The \emph{rotational submanifold over} $g$ is the immersion $f:M^{p-\ell}\times \mathbb{S}^{n-p+\ell}\to \mathbb{R}^{n+p}$ defined by \begin{equation*}\label{rotational} f(x,y)=\Theta \circ (g, \Id)(x,y)=(g_1(x),\ldots,g_{2p-\ell-1}(x),g_{2p-\ell}(x)y),\end{equation*} where $\Id:\mathbb{S}^{n-p+\ell}\to \mathbb{S}^{n-p+\ell}$ is the identity map and $\Theta: \mathbb{H}^{2p-\ell}\times \mathbb{S}^{n-p+\ell}\to \mathbb{R}^{n+p}$ is the conformal diffeomorphism $$\Theta(z,y)=(z_1, \ldots, z_{2p-\ell-1}, z_{2p-\ell}y),$$ whose conformal factor is $z_{2p-\ell}$.}
	
	\emph{As a consequence, the induced metric by $f$ is $$ \langle \,,\,\rangle_{f}=g_{2p-\ell}^{2}(ds^{2}+dy^{2}),$$ where $dy^{2}$ is the canonical metric of $\mathbb{S}^{n-p+\ell}.$}
	
		\emph{Again, let $\eta_1, \ldots, \eta_{p-\ell}$ be the nowhere vanishing pairwise orthogonal principal normal vector fields of $g$ and let $X_1, \ldots, X_{p-\ell}$ be an orthonormal frame of $(M^{p-\ell}, ds^{2})$ such that $ \alpha^{g}(X_i,X_j)=\delta_{ij}\eta_i$ for $1 \le i, j \le p-\ell$.}
	
		\emph{The second fundamental forms of $f$ and $g$  are related by  $$\alpha^{f}=\Theta_{*}\alpha^{g}-\frac{1}{g_{2p-\ell}}\Theta_{*}((\grad\, z_{2p-\ell})\circ g)^{\perp}(ds^{2}+dy^{2}).$$ Here $\grad$ is computed with respect to $dz^{2}.$  In particular, \begin{align*}
		\alpha^{f}(\frac{X_i}{g_{2p-\ell}}, \frac{X_j}{g_{2p-\ell}})&=\begin{cases}
			\Theta_{*}\alpha^{g}(\frac{X_i}{g_{2p-\ell}}, \frac{X_i}{g_{2p-\ell}})-\frac{1}{g_{2p-\ell}^{3}}\Theta_{*}((\grad \, z_{2p-\ell}) \circ g)^{\perp}, \quad i=j,\\
			0, \quad i \ne j,
		\end{cases}\\
		\alpha^{f}(\frac{1}{g_{2p-\ell}}\frac{\partial}{\partial y_k}, \frac{1}{g_{2p-\ell}}\frac{\partial}{\partial y_r})&=\begin{cases}-\frac{1}{g_{2p-\ell}^{3}}\Theta_{*}((\grad\,z_{2p-\ell})\circ g)^{\perp}, \quad k=r\\
			0, \quad k \ne r
		\end{cases}.
	\end{align*}}
	
		\emph{We have \begin{align*}
		||\alpha^{f}||^{2}&=\sum_{i=1}^{p-\ell}||\alpha^{f}(\frac{X_i}{g_{2p-\ell}}, \frac{X_i}{g_{2p-\ell}})||^{2}+\sum_{i=p-\ell+1}^{n}||\alpha^{f}(\frac{1}{g_{2p-\ell}}\frac{\partial}{\partial y_i}, \frac{1}{g_{2p-\ell}}\frac{\partial}{\partial y_i})||^{2}\\
		&=\sum_{i=1}^{p-\ell}||\alpha^{g}(\frac{X_i}{g_{2p-\ell}}, \frac{X_i}{g_{2p-\ell}})-\frac{1}{g_{2p-\ell}^{3}}((\grad\, z_{2p-\ell})\circ g)^{\perp}||^{2}_{\Theta}+\sum_{i=p-\ell+1}^{n}||\frac{1}{g_{2p-\ell}^{3}}((\grad\, z_{2p-\ell})\circ g)^{\perp}||^{2}_{\Theta}\\
		&=\frac{1}{g_{2p-\ell}^{4}}\sum_{i=1}^{p-\ell}||\alpha^{g}(X_i, X_i)||^{2}_{\Theta}-\frac{2}{g_{2p-\ell}^{5}}\sum_{i=1}^{p-\ell}\langle \alpha^{g}(X_i, X_i), ((\grad \,z_{2p-\ell})\circ g)^{\perp}\rangle_{\Theta}\\&+ \frac{n}{g_{2p-\ell}^{6}}||((\grad \, z_{2p-\ell})\circ g)^{\perp}||^{2}_{\Theta}
	\end{align*} and  \begin{align*}
		||\mathcal{H}^{f}||^{2}&=||\frac{1}{n}\sum_{i=1}^{p-\ell}\alpha^{g}(\frac{X_i}{g_{2p-\ell}}, \frac{X_i}{g_{2p-\ell}})-\frac{1}{g_{2p-\ell}^{3}}((\grad\, z_{2p-\ell})\circ g)^{\perp}||^{2}_{\Theta}\\
		&=\frac{1}{n^{2}}\frac{1}{g_{2p-\ell}^{4}}\sum_{i=1}^{p-\ell}||\alpha^{g}(X_i, X_i)||^{2}_{\Theta}-\frac{2}{ng_{2p-\ell}^{5}}\sum_{i=1}^{p-\ell}\langle \alpha^{g}(X_i, X_i), ((\grad \, z_{2p-\ell})\circ g)^{\perp}\rangle_{\Theta}\\&+\frac{1}{g_{2p-\ell}^{6}}||((\grad \, z_{2p-\ell})\circ g)^{\perp}||^{2}_{\Theta}.
	\end{align*}
	Therefore \begin{align*}
		\rho^{2}=\frac{n}{n-1}(||\alpha^{f}||^{2}-n||\mathcal{H}^{f}||^{2})
		&=\frac{1}{g_{2p-\ell}^{4}}\frac{n}{n-1}(\sum_{i=1}^{p-\ell}||\alpha^{g}(X_i,X_i)||^{2}_{\Theta}-\frac{1}{n}\sum_{i=1}^{p-\ell}||\alpha^{g}(X_i,X_i)||^{2}_{\Theta})\\
		&=\frac{1}{g_{2p-\ell}^{4}}\sum_{i=1}^{p-\ell}||\alpha^{g}(X_i,X_i)||^{2}_{\Theta}\\
		&=\frac{1}{g_{2p-l}^{2}}\sum_{i=1}^{p-\ell}||\alpha^{g}(X_i,X_i)||^{2}_{dz^{2}}=\frac{(p-\ell)^{2}||\mathcal{H}^{g}||^{2}_{dz^{2}}}{g_{2p-\ell}^{2}},
	\end{align*}that is, \begin{equation*}\rho=\frac{(p-\ell)||\mathcal{H}^{g}||_{dz^{2}}}{g_{2p-\ell}}.\end{equation*}
	We conclude that the Moebius metric determined by $f$ is given by  $$ \langle\,,\,\rangle^{*}=(p-\ell)^{2}||\mathcal{H}^{g}||^{2}_{dz^{2}}(ds^{2}+dy^{2}).$$}
\end{examples} 

\begin{lemma}\label{moebius-constante}
	\textit{Let $g:M^{p-\ell}\to \mathbb{Q}^{2p-\ell}_{\tilde{c}}$, $0 \le \ell \le p-2$, be an isometric immersion with flat normal bundle and vanishing index of relative nullity.  Then the immersion $f=\Theta \circ (g,\Id)\colon M^{p-\ell}\times \mathbb{Q}^{n-p+\ell}_{-\tilde{c}} \to \mathbb{R}^{n+p}$, where $\Theta=\Id:\mathbb{R}^{n+p}\to \mathbb{R}^{n+p}$ if $\tilde{c}=0$ or $\Theta$ is as in Examples \ref{cilindro}-$(ii)$ and $(iii)$ if  $\tilde{c} \ne 0$, has constant Moebius curvature $c$ if and only if the induced metric $ds^{2}$ by $g$ has constant sectional curvature $\tilde{c}$ and 
	\begin{equation}\label{condicoes}
		\Hess\, (1/||\mathcal{H}^{g}||)+\tilde{c}(1/||\mathcal{H}^{g}||)ds^{2}=0 \quad \mbox{and} \quad ||\grad\, (1/||\mathcal{H}^{g}||)||^{2}+\tilde{c}(1/||\mathcal{H}^{g}||^{2})=-(p-\ell)^{2}c,
	\end{equation} 
	for $\grad,$ $\Hess$ and $||\cdot|| $  computed with respect to $ds^{2}.$}
\end{lemma}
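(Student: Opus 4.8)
The plan is to reduce the computation of the Moebius curvature of $f$ to a conformal deformation of a Riemannian product, and then to read off the constant-curvature condition plane by plane.

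The first step is to record the precise form of the Moebius metric. The computations carried out in Examples \ref{cilindro} never actually use that the principal normals $\eta_1,\dots,\eta_{p-\ell}$ of $g$ are mutually orthogonal in order to reach their \emph{structural} conclusion: since $\rho$ and the conformal factor of the induced metric $\langle\,,\,\rangle_f$ are both determined by $g$, in each of the three cases one gets
$$\langle\,,\,\rangle^{*}=\Psi\,(ds^{2}+d\sigma^{2}),\qquad \Psi=\frac{n}{n-1}\Big(\sum_{i}\|\eta_i\|^{2}-\tfrac1n\big\|\sum_{i}\eta_i\big\|^{2}\Big),$$
where $d\sigma^{2}$ is the metric of $\mathbb{Q}^{n-p+\ell}_{-\tilde{c}}$ (of constant curvature $-\tilde{c}$) and $\Psi>0$ is a smooth function on $M^{p-\ell}$ alone. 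Thus $\langle\,,\,\rangle^{*}=e^{2\phi}\bar g$ is a conformal change of the product metric $\bar g=ds^{2}+d\sigma^{2}$ on $M^{p-\ell}\times \mathbb{Q}^{n-p+\ell}_{-\tilde{c}}$, with $\phi=\tfrac12\log\Psi$ depending only on the first factor.

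The second step is to compute the sectional curvatures. For $\bar g$-orthonormal $X,Y$ the standard conformal-change formula reads
$$K^{*}(X,Y)=e^{-2\phi}\big(\bar K(X,Y)-\Hess_{\bar g}\phi(X,X)-\Hess_{\bar g}\phi(Y,Y)+(X\phi)^{2}+(Y\phi)^{2}-\|\grad_{\bar g}\phi\|^{2}\big).$$
Because $\phi$ is constant along $\mathbb{Q}^{n-p+\ell}_{-\tilde{c}}$ and $\bar g$ is a product, $\grad_{\bar g}\phi=\grad_{ds^{2}}\phi$ is tangent to $M^{p-\ell}$, $\Hess_{\bar g}\phi$ equals $\Hess_{ds^{2}}\phi$ on pairs tangent to $M^{p-\ell}$ and vanishes otherwise, while $\bar K$ equals $K^{ds^{2}}$ on planes tangent to $M^{p-\ell}$, equals $-\tilde{c}$ on planes tangent to $\mathbb{Q}^{n-p+\ell}_{-\tilde{c}}$, and vanishes on mixed planes. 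Evaluating on the three kinds of planes (both tangent to $M^{p-\ell}$, mixed, both tangent to $\mathbb{Q}^{n-p+\ell}_{-\tilde{c}}$) yields three scalar identities; the mixed and vertical ones are available because $\dim M^{p-\ell}=p-\ell\ge 2$ and $\dim \mathbb{Q}^{n-p+\ell}_{-\tilde{c}}=n-p+\ell\ge 2$ under the standing hypotheses.

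The third step is to impose $K^{*}\equiv c$. The vertical planes give $\|\grad_{ds^{2}}\phi\|^{2}+\tilde{c}=-c\Psi$; subtracting this from the mixed-plane identity gives $\Hess_{ds^{2}}\phi(X,X)-(X\phi)^{2}=\tilde{c}$ for every unit $X\in TM^{p-\ell}$, i.e. $\Hess_{ds^{2}}\phi-d\phi\otimes d\phi=\tilde{c}\,ds^{2}$ by homogeneity; feeding both relations into the horizontal identity forces $K^{ds^{2}}\equiv\tilde{c}$. Conversely, these three statements return $K^{*}\equiv c$, so constant Moebius curvature $c$ is equivalent to $ds^{2}$ having constant curvature $\tilde{c}$ together with the two displayed equations for $\phi$. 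Finally, once $ds^{2}$ has constant curvature $\tilde{c}$ the Gauss equation \eqref{sec} forces $\langle \eta_i,\eta_j\rangle=0$ for $i\neq j$, whence $\Psi=(p-\ell)^{2}\|\mathcal{H}^{g}\|^{2}$ and $\phi=\log(p-\ell)-\log u$ with $u=1/\|\mathcal{H}^{g}\|$; then $\Hess_{ds^{2}}\phi-d\phi\otimes d\phi=-u^{-1}\Hess_{ds^{2}} u$ and $\|\grad_{ds^{2}}\phi\|^{2}=u^{-2}\|\grad_{ds^{2}} u\|^{2}$, so the two equations for $\phi$ become exactly \eqref{condicoes}. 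I expect the main obstacle to be the first step—verifying that the Moebius metric keeps the conformally-product form $\Psi(ds^{2}+d\sigma^{2})$ with $\Psi$ living on $M^{p-\ell}$ even when the $\eta_i$ are not a priori orthogonal—together with the careful bookkeeping in the third step that extracts $K^{ds^{2}}\equiv\tilde{c}$ first, so that the substitution $u=1/\|\mathcal{H}^{g}\|$ is legitimate.
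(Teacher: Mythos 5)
Your proposal is correct, but it takes a genuinely different route from the paper's. The paper's proof writes the Moebius metric as a warped product over a conformally rescaled base, $\langle\,,\,\rangle^{*}=g_1+\mu^{2}g_2$ with $\mu=(p-\ell)||\mathcal{H}^{g}||$, $g_1=\mu^{2}ds^{2}$ and $g_2=\sigma_{-\tilde{c}}$, and then invokes the warped-product constant-curvature criterion (Lemma \ref{constante}), leaving implicit the conformal translation of conditions (i)--(iii) for $(g_1,\mu)$ into the conditions \eqref{condicoes} for $(ds^{2},1/||\mathcal{H}^{g}||)$. You instead read the same metric as a conformal multiple $e^{2\phi}(ds^{2}+\sigma_{-\tilde{c}})$ of the Riemannian product and extract the constant-curvature conditions directly from the conformal transformation law, plane family by plane family; this is more self-contained (it bypasses Lemma \ref{constante} and the implicit translation) at the cost of redoing a curvature computation the paper outsources. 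Your handling of the orthogonality of the principal normals is in fact more careful than the paper's: the formula \eqref{moebiusgeneral} with $\Psi=(p-\ell)^{2}||\mathcal{H}^{g}||^{2}$ presupposes $\langle\eta_i,\eta_j\rangle=0$, which by the Gauss equation \eqref{sec} is tantamount to $ds^{2}$ having curvature $\tilde{c}$ --- precisely what the forward implication must still prove; your generalized factor $\Psi=\frac{n}{n-1}\bigl(\sum_i||\eta_i||^{2}-\frac{1}{n}||\sum_i\eta_i||^{2}\bigr)$ is indeed available without orthogonality (in Example \ref{cilindro}-(iii) the cross-terms involving $((\grad\,z_{2p-\ell})\circ g)^{\perp}$ cancel between $||\alpha^{f}||^{2}$ and $n||\mathcal{H}^{f}||^{2}$), and deferring the identification $\Psi=(p-\ell)^{2}||\mathcal{H}^{g}||^{2}$ until after $K^{ds^{2}}\equiv\tilde{c}$ has been derived removes any circularity. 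One expository caveat: in your converse, equality of $K^{*}$ with $c$ on adapted (horizontal, mixed, vertical) planes does not by itself mean constant curvature; you must check all planes, i.e.\ the full curvature tensor identity. Since your three conditions are tensorial (the Hessian identity holds as a symmetric bilinear form by polarization, as you note), a routine block computation with the Kulkarni--Nomizu form of the conformal change of curvature shows $R^{*}$ equals the constant-curvature tensor of curvature $c$, so this is a one-line addition rather than a gap.
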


We make use of the following well-known fact.

\begin{lemma}\label{constante}
Let $(M^{p}, g_1)$ and $(N^{n-p},g_2)$ be Riemannian manifolds with $n-2 \geq p \geq 1$. The warped metric $g_1+\mu^{2} g_2$ on $M^{p}\times N^{n-p}$, for $\mu \in C^{\infty}(M^{p})$, has constant sectional curvature $c$ if and only if \begin{itemize}
\item[(i)] $g_1$ has constant sectional curvature $c$ (for $p \geq 2$);
\item[(ii)] $\Hess \, \mu+c\mu g_1=0$; 
\item[(iii)] $g_2$ has constant sectional curvature $||\grad \, \mu||^{2}+c \mu^{2}$, 
\end{itemize}
where $\Hess$ and $\grad $ are computed with respect to $g_1$.
\end{lemma}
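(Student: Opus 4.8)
The plan is to compute the full curvature tensor of the warped product metric $g=g_1+\mu^2 g_2$ via the standard O'Neill warped-product curvature formulas and to compare it with the model tensor $R_0(X,Y)Z=g(Y,Z)X-g(X,Z)Y$, using the characterization that $(M^{p}\times N^{n-p},g)$ has constant sectional curvature $c$ if and only if $R=c\,R_0$. Throughout I would split every tangent vector into its horizontal part (tangent to the base $(M^{p},g_1)$) and its vertical part (tangent to the fiber $(N^{n-p},g_2)$), and organize the identity $R=c\,R_0$ according to how many of the four entries are vertical. Both implications of the ``if and only if'' come out of this single comparison, so there is no need to argue the two directions separately.

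First I would treat the purely horizontal entries: the corresponding component of $R$ coincides with the pullback of the curvature tensor of $(M^{p},g_1)$, so $R=c\,R_0$ on four horizontal arguments says exactly that $g_1$ has constant sectional curvature $c$, which is vacuous when $p=1$; this is (i). The components with an odd number of vertical entries vanish identically on both sides and impose no condition, as do the ``$\langle R(X,Y)V,W\rangle$'' terms. The genuinely informative mixed component is the one of the form $\langle R(X,V)V,Y\rangle$ with $X,Y$ horizontal and $V$ vertical: inserting the O'Neill formula that expresses it through the Hessian of the warping function and equating with $c\langle R_0(X,V)V,Y\rangle$ reduces, after cancelling the common factor $\|V\|^2$, to $\Hess\,\mu=-c\mu\,g_1$, which is (ii). Finally, the purely vertical entries equate the intrinsic curvature of the fiber with a constant-curvature tensor whose curvature value is $\|\grad\,\mu\|^{2}+c\mu^{2}$, giving (iii).

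The step demanding the most care is the vertical-vertical computation, because the metric induced on the fiber is the rescaled metric $\mu^{2}g_2$ rather than $g_2$ itself; one must carry the warping factor $\mu$ together with the gradient term $\|\grad\,\mu\|^{2}$ produced by differentiating $\mu$ in the O'Neill formula, and check that $\|\grad\,\mu\|^{2}+c\mu^{2}$ is the sectional curvature of $(N,g_2)$ computed with respect to $g_2$. A small but essential consistency point is that this quantity is automatically constant along $M^{p}$: differentiating and invoking (ii) gives $\grad\big(\|\grad\,\mu\|^{2}+c\mu^{2}\big)=2\,\Hess\,\mu(\cdot,\grad\,\mu)^{\sharp}+2c\mu\,\grad\,\mu=0$, so the requirement in (iii) is well posed and compatible with (ii). All computations are local and pointwise, and $\grad$, $\Hess$ and $\|\cdot\|$ are taken with respect to $g_1$, exactly as in the statement; the hypothesis $p\le n-2$ merely guarantees that the fiber has dimension at least two so that (iii) is meaningful.
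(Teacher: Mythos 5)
Your proof is correct, but there is nothing in the paper to compare it against: the paper states Lemma \ref{constante} without proof, as a ``well-known fact'' (the proof environment that follows it in the source is the proof of Lemma \ref{moebius-constante}, which \emph{uses} Lemma \ref{constante}). Your argument via O'Neill's warped-product curvature formulas, organized by the number of vertical entries in $R=c\,R_0$, is the standard derivation and is complete: the horizontal block gives (i), the mixed block gives $\Hess\,\mu+c\mu\,g_1=0$ after cancelling $\|V\|^2$ and polarizing, and the vertical block --- with the bookkeeping for the rescaled fiber metric $\mu^2 g_2$ that you rightly flag --- gives (iii). Your consistency check that (ii) forces $\|\grad\,\mu\|^{2}+c\mu^{2}$ to be constant on a connected base (since $\nabla_{\grad\mu}\grad\mu+c\mu\,\grad\mu=\Hess\,\mu(\cdot,\grad\mu)^{\sharp}+c\mu\,\grad\mu=0$) is a genuine point that makes (iii) well posed, and one the paper never records. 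One small imprecision: the components $\langle R(X,V)Y,W\rangle$ with two vertical entries do \emph{not} vanish on either side --- by the pair symmetry of $R$ they reduce to the same Hessian condition (ii) --- so it is not quite right to say that only $\langle R(X,V)V,Y\rangle$ is informative among the mixed terms; but since they impose nothing beyond (ii), this does not affect the validity of the equivalence.
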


\begin{proof}
    We start by proving the converse assertion.  Assume  $(M^{p-\ell}, ds^{2})$ has constant curvature $\tilde{c}$. By Examples \ref{cilindro},  the Moebius metric determined by $f$ is given by \begin{equation}\label{moebiusgeneral}
		\langle\,,\,\rangle^{*}=(p-\ell)^{2}||\mathcal{H}^{g}||^{2}(ds^{2}+\sigma_{-\tilde{c}}),
	\end{equation} here $\sigma_{-\tilde{c}}$ stands for the metric of $\mathbb{Q}^{n-p+\ell}_{-\tilde{c}}.$
	Write $$\langle\,,\,\rangle^{*}=g_1+\mu^{2}g_2,$$ where $\mu=(p-\ell)||\mathcal{H}^{g}||$, $g_1=\mu^{2}ds^{2}$ and $g_2=\sigma_{-\tilde{c}}.$ It follows from Lemma \ref{constante} that $g_1$ has constant sectional curvature $c$ (for $p \geq 2$), $\Hess \, \mu+c\mu g_1=0$ and $g_2$ has constant sectional curvature $||\grad \, \mu||^{2}+c \mu^{2}$, 
where $\Hess$ and $\grad $ are computed with respect to $g_1$.

 Conversely, if $f$ has constant Moebius curvature $c$,  by Example  \ref{cilindro} the Moebius metric determined by $f$ is given by \eqref{moebiusgeneral}.  Thus \eqref{condicoes} holds by Lemma \ref{constante}.
\end{proof}

The next corollary gives the solutions of  \ref{condicoes} on open subsets $U\subset\mathbb{Q}^{p-\ell}_{\tilde{c}}$.

\begin{corollary}\label{meancurvature}
	\textit{The solutions of \eqref{condicoes} on an open subset $U$ of $\mathbb{Q}^{p-\ell}_{\tilde{c}}$ are \begin{equation}\label{mean-curvature} ||\mathcal{H}^{g}||(x)=\begin{cases}\frac{1}{\langle v,x\rangle+a}, \quad \tilde{c}=0  \,\,\, \mbox{and}\,\,\, c \le 0  \vspace{8pt}\\
			\frac{1}{\langle v,x\rangle}, \quad \tilde{c}=1\,\, \mbox{and} \,\, c <0 \,\, \mbox{or}\,\, \Tilde{c}=-1 \,\, \mbox{and}\,\, c \in \mathbb{R}
		\end{cases},
	\end{equation} where $v \in \mathbb{E}^{p-\ell+1}:=\begin{cases}
 \mathbb{R}^{p-\ell}, \qquad \mbox{if} \, \, \, \Tilde{c}=0\\
	    \mathbb{R}^{p-\ell+1}, \quad \mbox{if}\,\, \, \Tilde{c}=1\\
     \mathbb{L}^{p-\ell+1}, \quad \mbox{if}\,\,\, \Tilde{c}=-1
	\end{cases}$ is such that $||v||^{2}=-(p-\ell)^{2}c$ and $a \in \mathbb{R}_{+}$. }
\end{corollary}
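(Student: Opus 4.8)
The plan is to set $\phi := 1/\|\mathcal{H}^{g}\|$, a positive smooth function on $U$, and rewrite the system \eqref{condicoes} as the pair
$$\Hess\, \phi + \tilde{c}\, \phi\, ds^{2} = 0, \qquad \|\grad\, \phi\|^{2} + \tilde{c}\, \phi^{2} = -(p-\ell)^{2}c,$$
with everything computed with respect to $ds^{2}$, which has constant sectional curvature $\tilde{c}\in\{0,1,-1\}$. The first equation is the classical Obata--Tashiro Hessian equation on a space form, and the whole content of the corollary is to solve it and then read off the constant imposed by the second equation.

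First I would record that, for any solution $\phi$ of the first equation, the function $\|\grad\, \phi\|^{2}+\tilde{c}\,\phi^{2}$ is automatically constant: differentiating along a field $X$ and using $\nabla_{X}\grad\, \phi = \Hess\, \phi(X,\cdot)^{\sharp} = -\tilde{c}\,\phi\, X$ gives $X(\|\grad\, \phi\|^{2}) = 2\,\Hess\, \phi(X,\grad\, \phi) = -2\tilde{c}\,\phi\, X(\phi)$, which exactly cancels $X(\tilde{c}\,\phi^{2}) = 2\tilde{c}\,\phi\, X(\phi)$. Consequently the second equation in \eqref{condicoes} imposes no further pointwise condition on $\phi$; it merely fixes the value of this conserved constant to be $-(p-\ell)^{2}c$.

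The main step is the classification of the solutions of the Obata equation, and this is where the real work lies. I would realize $U\subset\mathbb{Q}^{p-\ell}_{\tilde{c}}$ inside the standard model $\mathbb{R}^{p-\ell}$, $\mathbb{S}^{p-\ell}\subset\mathbb{R}^{p-\ell+1}$, or $\mathbb{H}^{p-\ell}\subset\mathbb{L}^{p-\ell+1}$, and check by a direct Gauss-formula computation that the restriction of an ambient linear function $x\mapsto\langle v,x\rangle$ (plus an additive constant $a$ when $\tilde{c}=0$) solves $\Hess\, \phi + \tilde{c}\,\phi\, ds^{2}=0$. On the non-flat models $x$ is the normal position field with $\langle x,x\rangle=1/\tilde{c}$ and $\widetilde\nabla_{X}x=X$, so the tangential part of the gradient is $\grad\, \phi = v - \tilde{c}\,\phi\, x$, whence $\nabla_{X}\grad\, \phi = -\tilde{c}\,\phi\, X$ follows immediately. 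To see these are all solutions I would prolong the system: setting $\psi=\grad\, \phi$, the equation becomes the first-order linear system $X(\phi)=\langle\psi,X\rangle$, $\nabla_{X}\psi=-\tilde{c}\,\phi\, X$, so along any curve the pair $(\phi,\psi)$ satisfies a linear ODE and is determined by its value at a single point; hence the solution space has dimension at most $p-\ell+1$. Since the displayed linear functions already form a $(p-\ell+1)$-parameter family that is genuinely independent on the open set $U$ (a nonzero affine or linear function cannot vanish on an open subset of the space form, as the quadric spans the ambient space), they exhaust the solutions. This prolongation and dimension count is the one point requiring care, though it is standard.

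Finally I would substitute and bookkeep signs. Evaluating the conserved quantity on $\phi(x)=\langle v,x\rangle+a$ gives $\|\grad\, \phi\|^{2}+\tilde{c}\,\phi^{2}=\|v\|^{2}$ in every case, the cross terms cancelling via $\langle x,x\rangle=1/\tilde{c}$ on the non-flat models, so the second equation becomes precisely $\|v\|^{2}=-(p-\ell)^{2}c$. The constraints on $c$ then follow from the signature of the space in which $v$ lives, together with the positivity of $\phi$ needed so that $\|\mathcal{H}^{g}\|=1/\phi$ is well defined and positive: for $\tilde{c}\in\{0,1\}$ the ambient metric is positive definite, forcing $-(p-\ell)^{2}c\ge 0$, i.e. $c\le 0$, with the strict inequality and the absence of an additive constant in the spherical case coming from the fact that a nonzero constant is not an Obata solution on $\mathbb{S}^{p-\ell}$; whereas for $\tilde{c}=-1$ the vector $v\in\mathbb{L}^{p-\ell+1}$ may be time-, light-, or space-like, so $\|v\|^{2}$, and hence $c$, is arbitrary. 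In each case one restricts $U$ to the region where $\langle v,x\rangle+a>0$.
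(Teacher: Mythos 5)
Your proof is correct, and on the existence side it essentially reproduces the paper's computation: the paper also realizes $U$ in the models $\mathbb{R}^{p-\ell}$, $\mathbb{S}^{p-\ell}\subset\mathbb{R}^{p-\ell+1}$ and $\mathbb{H}^{p-\ell}\subset\mathbb{L}^{p-\ell+1}$ and, using the relation $\Hess\, h(X,Y)=\Hess\, g(i_{*}X,i_{*}Y)+\langle \grad\, g,\alpha^{i}(X,Y)\rangle$ for the umbilical inclusions (with $\alpha^{i}(X,Y)=\mp\langle X,Y\rangle x$), checks that $h(x)=\langle v,x\rangle$ (plus the constant $a$ when $\tilde{c}=0$) satisfies both equations of \eqref{condicoes} with $||v||^{2}=-(p-\ell)^{2}c$ --- exactly your tangential-projection computation $\grad\, h=v^{T}$ together with the cancellation $||v^{T}||^{2}\pm\langle v,x\rangle^{2}=||v||^{2}$. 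Where you genuinely diverge is on uniqueness: the paper's proof only \emph{exhibits} these functions as solutions; in the flat case completeness is immediate since $\Hess\,\phi=0$ forces $\phi$ to be affine, but for $\tilde{c}=\pm 1$ the proof as written does not show that every solution of the Hessian equation on an open subset of the space form is the restriction of a linear functional, implicitly appealing to the classical Obata--Tashiro classification. Your two additions --- the observation that $||\grad\,\phi||^{2}+\tilde{c}\,\phi^{2}$ is a first integral of the Hessian equation, so the second equation in \eqref{condicoes} merely normalizes a conserved constant, and the prolongation to the linear system $X(\phi)=\langle \psi,X\rangle$, $\nabla_{X}\psi=-\tilde{c}\,\phi X$, which bounds the solution space by $p-\ell+1$, a dimension the displayed family saturates --- supply precisely this missing completeness step and make the corollary self-contained. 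Two small points of care: the first-integral and dimension-count arguments need $U$ connected (otherwise run them on each component), and your independence claim does hold because an open piece of $\mathbb{S}^{p-\ell}$ or $\mathbb{H}^{p-\ell}$ spans the ambient $(p-\ell+1)$-dimensional space, so a linear functional vanishing on $U$ vanishes identically. Your sign bookkeeping --- in particular why $c=0$ is excluded when $\tilde{c}=1$, why no additive constant occurs in the nonflat cases, and why $c$ is unrestricted when $v\in\mathbb{L}^{p-\ell+1}$ --- matches the statement.
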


\begin{proof}
	Consider $(U,ds^{2})$ as an open subset of $\mathbb{R}^{p-\ell}$. Then 
	\begin{equation*}
		\frac{1}{||\mathcal{H}^{g}||}(x)=\langle v,x\rangle +a \quad \forall \, x \in U,
	\end{equation*} where $a \in \mathbb{R}_{+}$ and $v \in \mathbb{R}^{p-\ell}$ is a constant vector such that $||v||^{2}=-(p-\ell)^{2}c$, is the solution of \eqref{condicoes} for $\tilde{c}=0$.

	Now, consider $(U, ds^{2})$ as an open subset of $\mathbb{S}^{p-\ell}\subset \mathbb{R}^{p-\ell+1}$ and let  $g:\mathbb{R}^{p-\ell+1} \to \mathbb{R}$  be the linear functional defined by 
	$$g(x)=\langle x,v\rangle ,\quad \mbox{where}\,\,v \in \mathbb{R}^{p-\ell+1} \,\, \mbox{is such that}\,\, ||v||^{2}=-(p-\ell)^{2}c.
	$$
	Then the gradient and  Hessian of $g$ and those of  $h=g \circ i\colon U \subset \mathbb{S}^{p-\ell}\to \mathbb{R},$ where $i:\mathbb{S}^{p-\ell}\to \mathbb{R}^{p-\ell+1}$ is an umbilical inclusion, are related by $$ i_{*} \grad\,  h=(\grad \, g)^{T}$$ and $$\Hess\, h(X,Y)=\Hess\, g(i_{*}X, i_{*}Y)+\langle \grad \, g, \alpha^{i}(X,Y)\rangle$$ for any $x \in U$ and $X,Y \in T_{x}\mathbb{S}^{p-\ell}$ (see Proposition 1.2 of \cite{DT}). Since $\grad\, g=v $ e $\alpha^{i}(X,Y)=-\langle X,Y\rangle x$, we obtain $$i_{*}\grad \, h=v^{T} $$ and $$ \Hess \, h(X,Y)=-\langle v,x\rangle \langle X,Y \rangle=-h(x)\langle X,Y\rangle.$$
	Noticing \begin{align*}
		||\grad \, h||^{2}=||v^{T}||^{2}=||v-v^{\perp}||^{2}=||v||^{2}-||v^{\perp}||^{2}&=-(p-\ell)^{2}c-\langle v,x\rangle^{2}||x||^{2}\\
		&=-(p-\ell)^{2}c-\langle v,x\rangle^{2},
	\end{align*} then
	 $$ ||\grad \, h||^{2}+(p-\ell)^{2}c=-h^{2}.$$ 
	Therefore, $1/||\mathcal{H}^{g}||:U \subset \mathbb{S}^{p-\ell}\to \mathbb{R}$ defined by \begin{equation*}
		(1/||\mathcal{H}^{g}||)(x)=\langle x,v\rangle, \quad \forall x \in U,
	\end{equation*} with $||v||^{2}=-(p-\ell)^{2}c$, is a solution of \eqref{condicoes} for $\tilde{c}=1.$

	Similarly, consider $(U,ds^{2})$ as an open subset of $\mathbb{H}^{p-\ell}\subset \mathbb{L}^{p-\ell+1},$ where $\mathbb{H}^{p-\ell}$ is endowed with the hyperboloid model. Let $g:\mathbb{L}^{p-\ell+1}\to \mathbb{R}$ be the linear functional defined by 
	$$g(x)=\langle x,v\rangle, \,\, \mbox{where}\,\,v \in \mathbb{L}^{p-\ell+1} \,\, \mbox{is such that}\,\, ||v||^{2}=-(p-\ell)^{2}c,$$ 
	and let $h:=g \circ i:U \subset \mathbb{H}^{p-\ell}\to \mathbb{R}$, where $i:\mathbb{H}^{p-\ell}\to \mathbb{L}^{p-\ell+1}$ is an umbilical inclusion. 
	
	The Hessian and gradient of $g$ and $h$ are related by $$ i_{*}\grad \, h=(\grad \, g)^{T}$$ and $$ \Hess\, h (X,Y)=\Hess \, g(i_{*}X, i_{*}Y)+\langle \grad \, g, \alpha^{i}(X,Y)\rangle,$$ for any $x \in U$ and $X,Y \in T_{x}\mathbb{H}^{p-\ell}.$ Since $\grad \, g=v$ and $\alpha^{i}(X,Y)=\langle X,Y\rangle x$, we have $$ i_{*}\grad \, h=v^{T}$$ and $$\Hess \, h(X,Y)=\langle v,x \rangle \langle X,Y\rangle=h(x)\langle X,Y\rangle.$$
	Thus \begin{align*}
		||\grad \, h ||^{2}&=||v^{T}||^{2}=||v-v^{\perp}||^{2}=||v||^{2}-||v^{\perp}||^{2}\\
		&=-(p-\ell)^{2}c-\langle v^{\perp}, x\rangle^{2}||x||^{2}\\
		&=-(p-\ell)^{2}c+\langle v,x\rangle^{2}\\
		&=-(p-\ell)^{2}c+(h(x))^{2}.
	\end{align*}
	Therefore, the function $1/||\mathcal{H}^{g}||:U \subset \mathbb{H}^{p-\ell}\to \mathbb{R}$ given by \begin{equation*}
		(1/||\mathcal{H}^{g}||)(x)=\langle v,x \rangle, \quad \mbox{where} \,\, v \in \mathbb{L}^{p-\ell+1} \,\, \mbox{is such that}\,\, ||v||^{2}=-(p-\ell)^{2}c , 
	\end{equation*} is a solution of \eqref{condicoes} for $\tilde{c}=-1.$
\end{proof}

\begin{observation} \emph{Particular cases of isometric immersions $g:M^{p-\ell}_{\tilde{c}}\to \mathbb{Q}^{2p-\ell}_{\Tilde{c}}$, $0 \le \ell \le p-2$, satisfying the conditions of Lemma \ref{moebius-constante} are those for which $||\mathcal{H}^{g}||$ is constant and $\tilde{c}=0$.  From a global point of view, it was proved in \cite{DT-constante-curvature-sub} that if $g\colon\mathbb{R}^{n}\to \mathbb{R}^{m}$ is an isometric immersion with flat normal bundle, constant index of relative nullity and mean curvature vector field with constant length, then $g(\mathbb{R}^{n})$ is a Riemannian product of curves with constant first curvature functions. The case $n=2$ was obtained previously in  \cite{enomoto} without assuming the index of relative nullity to be constant. In particular, if $g\colon M^{2}\to \mathbb{R}^{4}$ is a compact surface   with flat induced metric and flat normal bundle whose mean curvature vector field has constant length then $g(M^{2})$ is a Riemannian product of two circles $\mathbb{S}^{1}\times \mathbb{S}^{1}.$}
\end{observation}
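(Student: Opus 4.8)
The remark gathers three logically independent claims; since the two middle ones are literal restatements of the main theorems of \cite{DT-constante-curvature-sub} and \cite{enomoto}, the plan is to recall those verbatim and to supply short arguments only for the first claim and the concluding one.

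For the first claim I would simply insert $\tilde c=0$ together with the hypothesis that $\|\mathcal{H}^g\|$ be constant into the conditions \eqref{condicoes} of Lemma \ref{moebius-constante}. As $1/\|\mathcal{H}^g\|$ is then constant, both its gradient and its Hessian with respect to $ds^2$ vanish identically, so the first equation in \eqref{condicoes} becomes the trivial identity $0=0$ while the second reduces to $0=-(p-\ell)^2 c$. Thus such an immersion satisfies the hypotheses of Lemma \ref{moebius-constante} precisely with Moebius curvature $c=0$, which is the degenerate branch $v=0$, $a>0$ of the $\tilde c=0$ solution exhibited in Corollary \ref{meancurvature}. This needs no further computation, and it also shows that the constant-length case is exactly the locus where the solutions of Corollary \ref{meancurvature} collapse to constants.

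The concluding assertion is the only genuine deduction. Applying the surface case \cite{enomoto} of the quoted result, $g(M^2)$ is, up to a rigid motion, a Riemannian product $\gamma_1\times\gamma_2$ of two curves of constant first curvature sitting in orthogonal subspaces of $\mathbb{R}^4$ whose dimensions add up to $4$. I would then invoke compactness of $M^2$ twice. First, compactness forces each factor $\gamma_i$ to be a closed curve; this discards a straight-line (relative-nullity) factor and hence the dimension split $4=1+3$, leaving only $4=2+2$, so that each $\gamma_i$ is a plane curve. Second, the unique closed plane curve of constant curvature is a round circle, whence $g(M^2)=\mathbb{S}^1\times\mathbb{S}^1$.

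The main point requiring care, and essentially the only obstacle, is this double use of compactness: one must check that a non-closed factor (a line, or a proper circular arc) is incompatible with $M^2$ being compact, thereby forcing the $2+2$ splitting, and then identify each resulting closed constant-curvature plane curve with a full circle. Everything else is either trivial or quoted directly from the cited references.
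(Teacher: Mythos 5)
Your proposal is correct and takes essentially the same route the paper intends: the remark carries no proof of its own, resting on the citations to \cite{DT-constante-curvature-sub} and \cite{enomoto}, and your substitution of constant $||\mathcal{H}^{g}||$ and $\tilde{c}=0$ into \eqref{condicoes} (which forces $c=0$, i.e.\ the $v=0$, $a>0$ branch of Corollary \ref{meancurvature}) is exactly the routine verification the authors leave implicit. Your compactness argument for the final assertion is also sound: discarding the $1+3$ splitting via the unbounded line factor (which in particular preempts any worry about non-planar curves of constant first curvature in a $3$-dimensional factor) and then identifying each bounded constant-curvature plane curve with a full circle is the intended deduction from Enomoto's theorem.
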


\begin{example}
   \emph{ Let $\gamma_i\colon I_i \to \mathbb{R}^{2}$ be a smooth curve with curvature $\kappa_i$, $1 \leq i \leq 2.$ Consider the surface $ g=\gamma_1 \times \gamma_2\colon I_1 \times I_2 \to \mathbb{R}^{2}\times \mathbb{R}^{2}=\mathbb{R}^{4}$, for which
    $||\mathcal{H}^{g}||^{2} =\frac{1}{4}(\kappa_1^{2}+\kappa_2^{2})$.
In particular, \begin{equation*}
    \Hess\, \| \mathcal{H}^{g}\|^{2}(\partial_1, \partial _2)=\partial_1(\partial_2( \kappa_1^{2}+\kappa_2^{2}))-(\nabla_{\partial_1}\partial_2)(\kappa_1^{2}+\kappa_2^{2})=0,
\end{equation*} where $\partial_1$ and $\partial_2$ are the coordinates vector fields of $I_1$ and $I_2$, respectively, and $\nabla$ is the Levi-Civita connection of the product metric $ds_1^{2}+ds_2^{2}.$}
\emph{ Assume $ \|\mathcal{H}^{g}\|^{-1}(x)=\langle v,x\rangle+a$ for all $x \in I_1 \times I_2$, where $||v||=2\sqrt{-c}$ for $c\le 0$ and $a \in \mathbb{R}_{+}$. Then \begin{align*}
   0= \Hess \, ||\mathcal{H}^{g}||^{-1}(\partial_1, \partial_2)&=-\frac{1}{2}||\mathcal{H}^{g}||^{-3}\Hess\, ||\mathcal{H}^{g}||^{2}(\partial_1, \partial_2)+\frac{3}{4}||\mathcal{H}^{g}||^{-5}(\partial_1||\mathcal{H}^{g}||^{2})\partial_2||\mathcal{H}^{g}||^{2}\\
    &=\frac{3}{4}||\mathcal{H}^{g}||^{-5}(\partial_1||\mathcal{H}^{g}||^{2})\partial_2||\mathcal{H}^{g}||^{2},
\end{align*} hence $$(\partial_1 \kappa_1)(\partial_2 \kappa_2)=0.$$}
 \emph{ If, say, $k_1$ is not constant, then $\kappa_2\equiv r$ for some $r>0$ and, from $\frac{1}{||\mathcal{H}^{g}||}(x_1)=2\sqrt{-c}x_1$ for $c<0$, we obtain $$\kappa_1(x_1)=\sqrt{-c^{-1}x_1^{-2}-r^{2}}, \quad \mbox{for}\,\,\, 0<|x_1| < \frac{1}{r\sqrt{-c}}.$$ Therefore, $g=\gamma_1 \times \gamma_2$, where $\gamma_1$ is a curve with curvature $\kappa_1(x_1)=\sqrt{-c^{-1}x_1^{-2}-r^{2}}$ and $\gamma_2(I_2)\subset \mathbb{S}^{1}$, hence $g \times \Id:I_1 \times I_2 \times \mathbb{R}^{n-2}\to \mathbb{R}^{n+2}$ is a cylinder  with constant negative Moebius curvature $c.$}
\end{example}

\section{Moebius invariants of conformally flat submanifolds with closed Moebius form}\label{moebius-inva-closed}

Let $f:M^{n}\to \mathbb{R}^{n+p}$, $n-3 \geq p \geq 1,$ be a proper isometric immersion with flat normal bundle  of a conformally flat manifold. Under these assumptions, Theorem \ref{theorem-moore} assures that $f$ has a principal normal vector field $\eta$ with multiplicity $n-p +\ell$, for some $0 \le \ell \le p-1$, whereas Theorem \ref{theorem-DOV} implies that all the other $p-\ell$ principal normal vector fields of $f$ are  simple.

  Assume that the Moebius form of $f$ is closed. Let $\bar{\eta}_i=\rho^{-1}(\eta_i-\mathcal{H}^{f})$, $1 \le i \le k,$ be  the associated Moebius principal normal vector fields of $f$. Since the Moebius form $\omega^{f}$ is closed, the conformal Ricci equation implies that there exists an orthonormal frame $X_1, \ldots, X_n$ with respect to $\langle \,,\,\rangle^{*}$ that diagonalizes $\beta$ and $\psi$ simultaneously and such that 
  $$ \beta(X_i,X_i)=\Bar{\eta}:=\rho^{-1}(\eta-\mathcal{H}^{f})$$ for any $p- \ell +1 \le i \le n$.  Furthermore, all the remaining Moebius principal normal vector fields $\beta(X_j,X_j)$, $1 \le j \le p-\ell$, are  simple.  
 Now consider the smooth distributions \begin{equation}
    \Delta=\spa\{X_i: p-\ell+1 \le i \le n\} \quad \mbox{and} \quad \Delta^{\perp}=\spa\{X_i: 1 \le i \le p-\ell\}.
\end{equation}
Since $\Delta=E_{\eta}$ and $\dim \Delta=n-p +\ell \geq 2,$ then $\Delta$ is umbilical with respect to $\langle \,,\,\rangle_{f}$  by Proposition \ref{dupin},  hence also umbilical with respect to $\langle \,,\,\rangle^{*}$, for $\langle \,,\,\rangle^{*}$ and $\langle \,,\,\rangle_{f}$ are conformal metrics.

\begin{proposition}\label{totally-umbilical}
    If the distribution $\Delta^{\perp}$ is totally  geodesic with respect to $\langle\,,\,\rangle^{*}$, then $\Delta^{\perp}$ is spherical with respect to $\langle \,,\rangle_{f}$ with mean curvature vector field $(\grad_{f}\log \rho)_{\Delta}.$
\end{proposition}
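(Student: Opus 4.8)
The plan is to exploit the fact that $\langle\,,\,\rangle^{*}=\rho^{2}\langle\,,\,\rangle_{f}$ are conformal metrics and to compare the geometry of $\Delta^{\perp}$ with respect to the two Levi-Civita connections through the standard conformal change of connection formula. Writing $\phi=\log\rho$, the connections $\nabla^{*}$ and $\nabla^{f}$ of $\langle\,,\,\rangle^{*}$ and $\langle\,,\,\rangle_{f}$ are related by
$$\nabla^{*}_{X}Y=\nabla^{f}_{X}Y+(X\phi)Y+(Y\phi)X-\langle X,Y\rangle_{f}\,\grad_{f}\phi$$
for all $X,Y\in\mathfrak{X}(M)$. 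Since a conformal rescaling preserves orthogonality, $\Delta$ and $\Delta^{\perp}$ are mutually orthogonal with respect to both metrics, so the orthogonal projection onto $\Delta$ is the same for the two metrics.

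First I would settle the umbilicity of $\Delta^{\perp}$ with respect to $\langle\,,\,\rangle_{f}$. Taking $T,S\in\Gamma(\Delta^{\perp})$ in the formula above and projecting onto $\Delta$, the two middle terms drop out because $T$ and $S$ have no $\Delta$-component, leaving
$$(\nabla^{*}_{T}S)_{\Delta}=(\nabla^{f}_{T}S)_{\Delta}-\langle T,S\rangle_{f}\,(\grad_{f}\phi)_{\Delta}.$$
The hypothesis that $\Delta^{\perp}$ is totally geodesic with respect to $\langle\,,\,\rangle^{*}$ means precisely that the left-hand side vanishes, whence $(\nabla^{f}_{T}S)_{\Delta}=\langle T,S\rangle_{f}\,(\grad_{f}\log\rho)_{\Delta}$. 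This is exactly the umbilicity of $\Delta^{\perp}$ with respect to $\langle\,,\,\rangle_{f}$ with mean curvature vector field $\delta=(\grad_{f}\log\rho)_{\Delta}$, and this step is essentially immediate.

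The substantive part is to upgrade umbilicity to sphericity, that is, to prove $(\nabla^{f}_{T}\delta)_{\Delta}=0$ for every $T\in\Gamma(\Delta^{\perp})$. The cleanest route is to observe that being totally geodesic with respect to $\langle\,,\,\rangle^{*}$ makes $\Delta^{\perp}$ a spherical distribution with respect to $\langle\,,\,\rangle^{*}$ whose mean curvature vector field $\delta^{*}$ vanishes (hence is trivially parallel), and that sphericity of a distribution is invariant under conformal changes of the ambient metric: if $\delta^{*}$ and $\delta$ denote the mean curvature vector fields of an umbilical distribution with respect to $\langle\,,\,\rangle^{*}$ and $\langle\,,\,\rangle_{f}$, the same projection computation gives $\delta=\rho^{2}\delta^{*}+(\grad_{f}\log\rho)_{\Delta}$, and the parallelism condition transfers from one metric to the other. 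Applying this with $\delta^{*}=0$ recovers $\delta=(\grad_{f}\log\rho)_{\Delta}$ and yields sphericity at once. If one prefers a self-contained argument, the same conclusion follows by differentiating the umbilicity relation once more: one expresses $\langle\nabla^{f}_{T}\delta,X\rangle_{f}$ for $X\in\Gamma(\Delta)$ in terms of $\Hess_{f}\,\phi$ and then invokes both the total geodesy of $\Delta^{\perp}$ with respect to $\langle\,,\,\rangle^{*}$ and the umbilicity of $\Delta=E_{\eta}$ to show that the remaining terms cancel.

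I expect the main obstacle to be exactly this last sphericity step. The umbilicity follows from a one-line projection of the conformal connection formula, whereas verifying $(\nabla^{f}_{T}\delta)_{\Delta}=0$ requires the full strength of the total-geodesy hypothesis (not merely its umbilical consequence) together with the umbilicity of $\Delta$, and keeping careful track of the $\Delta$- and $\Delta^{\perp}$-components of $\grad_{f}\phi$ through the second differentiation is where the bookkeeping is most delicate.
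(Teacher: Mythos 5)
Your umbilicity step is correct and coincides with the paper's: projecting the conformal change of connection formula onto $\Delta$ and using the total geodesy of $\Delta^{\perp}$ with respect to $\langle\,,\,\rangle^{*}$ gives that $\Delta^{\perp}$ is umbilical with respect to $\langle\,,\,\rangle_{f}$ with mean curvature vector field $\delta=(\grad_{f}\log\rho)_{\Delta}$, and your transformation rule $\delta=\rho^{2}\delta^{*}+(\grad_{f}\log\rho)_{\Delta}$ is also correct. The gap is your main route to sphericity: sphericity of a distribution is \emph{not} invariant under conformal changes of the metric. Carrying out the projection computation for the derivative of the mean curvature vector field, with $\tilde{g}=e^{2\phi}g$ and $T\in\Gamma(\Delta^{\perp})$, one finds
\begin{equation*}
(\tilde{\nabla}_{T}\tilde{\delta})_{\Delta}=-(T\phi)\tilde{\delta}+e^{-2\phi}\left((\nabla_{T}\delta)_{\Delta}-(\nabla_{T}(\grad_{g}\phi)_{\Delta})_{\Delta}\right),
\end{equation*}
so parallelism transfers only modulo the term $(\nabla_{T}(\grad_{g}\phi)_{\Delta})_{\Delta}$, i.e., modulo mixed Hessian components of the conformal factor, which do not vanish in general: a totally geodesic foliation stays umbilical after a generic conformal rescaling, but not spherical. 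Worse, in the very case you need --- $\tilde{\delta}=\delta^{*}=0$ and $\delta=(\grad_{f}\log\rho)_{\Delta}$ with $\phi=\log\rho$ --- the displayed identity reduces to $0=0$ and carries no information whatsoever about $({^{f}\nabla_{T}}\delta)_{\Delta}$. Note also that Proposition \ref{prop.reduction} applies the result when $\Delta^{\perp}$ has rank one, where no general principle upgrades umbilicity to sphericity.

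What actually closes the gap in the paper is an ingredient your proposal never invokes: the standing assumption of Section \ref{moebius-inva-closed} that the Moebius form of $f$ is closed, which together with the flatness of the normal bundle yields, via the conformal Ricci equation, a frame $X_{1},\ldots,X_{n}$ diagonalizing $\beta$ and the Blaschke tensor $\psi$ \emph{simultaneously}. Since $\psi$ contains the term $-\frac{1}{\rho}\Hess^{*}\rho$, and $\beta$, $\psi$ and $\langle\,,\,\rangle^{*}$ are all diagonal in this frame, one obtains $\Hess^{*}\rho(X_{i},X_{j})=0$ for $i\ne j$; rewriting this via the conformal connection formula gives $\langle{^{f}\nabla_{X_{i}}}\grad_{f}\log\rho,X_{j}\rangle_{f}=X_{i}(\log\rho)X_{j}(\log\rho)$ for $X_{i}\in\Gamma(\Delta^{\perp})$, $X_{j}\in\Gamma(\Delta)$, while the total geodesy of $\Delta^{\perp}$ (through $\nabla^{*}_{X_{i}}X_{j}\in\Gamma(\Delta)$) gives the same value for $\langle{^{f}\nabla_{X_{i}}}(\grad_{f}\log\rho)_{\Delta^{\perp}},X_{j}\rangle_{f}$; subtracting yields $({^{f}\nabla_{X_{i}}}\delta)_{\Delta}=0$. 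Your self-contained fallback misidentifies the inputs: the umbilicity of $\Delta=E_{\eta}$ plays no role in this step, and total geodesy alone cannot control the mixed components $\Hess^{*}\rho(X_{i},X_{j})$; without the simultaneous diagonalization of the Blaschke tensor the sphericity assertion simply does not follow.
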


\begin{proof}
Using the relation $$\nabla^{*}_{X_a}X_b={^{f}\nabla_{X_a}}X_b+\frac{1}{\rho}(X_a(\rho)X_b+X_b(\rho)X_a-\langle X_a,X_b\rangle_{f}\grad_{f}\rho)$$ and the fact that $\Delta^{\perp}$ is totally geodesic with respect to $\langle\,,\,\rangle^{*}$, we obtain \begin{equation*}\label{total} \langle {^{f}\nabla_{X_a}X_b}, X_i\rangle_{f}=\langle X_a,X_b\rangle_{f}\langle \frac{\grad_{f}\rho}{\rho}, X_i\rangle_{f}\end{equation*} for all $i \geq p-\ell+1$ and $1 \le a,b \le p-\ell.$ Thus $\Delta^{\perp}$ is umbilical with respect to the metric induced by $f$, with $(\grad_{f} \log \rho)_{\Delta}$ as its mean curvature vector field. 

 In order to prove that $\Delta^{\perp}$ is  spherical with respect to $\langle \,,\,\rangle_{f}$, we must show that
  \begin{equation}\label{spherical}
			\langle {^{f}\nabla_{X_i}}(\grad_{f} \log \rho )_{\Delta}, X_j \rangle_{f}=0, \quad 1\leq i\leq p-\ell, \quad  p-\ell+ 1 \le j \le n.
   \end{equation}
   Since $X_1, \ldots, X_n$  diagonalizes $\beta$ and $\psi$ simultaneously, then $\Hess^{*}\rho(X_i,X_j)=0$ for all $ 1 \leq i\neq j \leq n$. Using that
 \begin{equation*}
			\nabla^{*}_{X_i}\grad^{*}\rho= {^{f}\nabla_{X_i}}\grad^{*}\rho+\frac{1}{\rho}((\grad^{*}\rho)(\rho)X_i+ X_i(\rho)\grad^{*}\rho-\langle X_i, \grad^{*}\rho \rangle_{f}\grad_{f}\rho),
		\end{equation*} and that $
		\grad^{*}\rho=\dfrac{\grad_{f}\rho}{\rho^{2}},
		$ it follows that 
  \begin{align}
			0&=  \Hess^{*}\rho(X_i,X_j)\nonumber\\&=\langle \nabla^{*}_{X_i}\grad^{*}\rho, X_j \rangle^{*}\nonumber\\
   &=\rho^2 
			\langle {^{f}\nabla_{X_i}}\grad^{*}\rho, X_j \rangle_{f}\nonumber\\
			&=\langle X_i(\rho^{-1})\grad_{f}\log \rho+\rho^{-1}{^{f}\nabla_{X_i}}\grad_{f}\log \rho, X_j \rangle_{f}\nonumber,
		\end{align} hence \begin{equation}\label{esfera1}
			\langle ^{f}\nabla_{X_i}\grad_{f}\log \rho, X_j \rangle_{f}=X_i(\log \rho)X_j(\log \rho).\end{equation}
			 On the other hand, since $\nabla^{*}_{X_i}X_j={^{f}\nabla_{X_i}}X_j+\frac{1}{\rho}(X_i(\rho)X_j+X_j(\rho)X_i) \in \Gamma(\Delta)$, then 
   \begin{align}
			\langle ^{f}\nabla_{X_i}(\grad_{f}\log \rho)_{\Delta^{\perp}},X_j \rangle_{f}&=-\langle (\grad_{f}\log \rho)_{\Delta^{\perp}}, {^{f}\nabla_{X_i}}X_j \rangle_{f}\nonumber\\
			&=X_i(\log \rho)X_j(\log \rho), \label{esfera2}
		\end{align} 
  and hence \eqref{spherical} follows from \eqref{esfera1} and \eqref{esfera2}. 
\end{proof}

\subsection{Submanifolds with exactly two distinct principal normals}

The next result classifies isometric immersions with flat normal bundle and arbitrary codimension $f:M^{n}\to \mathbb{R}^{n+p}$, $n \geq 3$, that have closed Moebius form and a principal normal vector field $\eta$ of multiplicity $n-1$. It  generalizes Theorem 5.3 in \cite{deform} and Corollary 1.2 of \cite{hyper-two-curvatures-closed-moebius-form}, which classify umbilic-free conformally flat hypersurfaces with closed Moebius form.

\begin{proposition}\label{prop.reduction}
Let $f:M^{n}\to \mathbb{R}^{n+p}$, $n \geq 3$ and $p \geq 1$, be an isometric immersion with flat normal bundle that has a principal normal vector field $\eta$ of multiplicity $n-1$. If $\omega^{f}$ is closed, then $f(M)$ is the image by a conformal transformation of  $\mathbb{R}^{n+p}$ of an open subset of a cylinder, a generalized cone or a rotational submanifold over a curve $\gamma:I\to \mathbb{Q}^{p+1}_{\tilde{c}}$, where $\tilde{c}=0,1$ or $-1$, respectively.
\end{proposition}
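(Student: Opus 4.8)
The plan is to reduce the statement to an application of Theorem~\ref{cor:dupinpcn} with $k=n-1$. Since $f$ has flat normal bundle and $\eta$ has multiplicity $n-1$, at each point $T_xM=E_\eta(x)\oplus E_\eta^\perp(x)$ with $\dim E_\eta=n-1$ and $\dim E_\eta^\perp=1$, so $f$ carries exactly one further principal normal vector field $\eta_1$, of multiplicity one; in particular $\eta\neq\eta_1$, hence $f$ is free of umbilic points and proper, and its Moebius metric is well defined. Put $\Delta=E_\eta$ and $\Delta^\perp=E_{\eta_1}$. As $n-1\ge 2$, Proposition~\ref{dupin}(i) gives that $\eta$ is Dupin, and $\Delta^\perp$ is umbilical simply because it is one-dimensional. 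Thus two of the three hypotheses of Theorem~\ref{cor:dupinpcn} hold automatically, and the whole problem reduces to showing, using $d\omega^f=0$, that the integral curves of $\Delta^\perp$ are extrinsic circles of $M^n$.

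To bring in the closedness of $\omega^f$, I would first fix a good frame. The conformal Ricci equation~\eqref{R1} is an identity whose right-hand side vanishes exactly when $\omega^f$ is closed; hence $d\omega^f=0$ is equivalent to $\beta(Y,\hat\psi X)=\beta(X,\hat\psi Y)$ for all $X,Y$, i.e.\ to $\hat\psi$ commuting with every shape operator $B_\xi$ of $\beta$. Writing $\bar\eta,\bar\eta_1$ for the Moebius principal normals, one has $\beta(X,Y)=\langle X,Y\rangle^*\bar\eta$ on $\Delta$ and $\beta(T,T)=\bar\eta_1$ on a $\langle\,,\,\rangle^*$-unit field $T\in\Gamma(\Delta^\perp)$, with $\bar\eta_1=-(n-1)\bar\eta$ by tracelessness and $\bar\eta\neq 0$ because $\eta\neq\eta_1$. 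Since the two eigenvalues of $B_\xi$ on $\Delta$ and $\Delta^\perp$ differ for suitable $\xi$, commutation forces $\hat\psi$ to preserve $\Delta$ and $\Delta^\perp$. I therefore obtain a local orthonormal frame $X_1=T,X_2,\dots,X_n$ for $\langle\,,\,\rangle^*$, adapted to $\Delta^\perp\oplus\Delta$, that diagonalizes $\beta$ and $\psi$ simultaneously, so that $\Hess^*\rho(X_i,X_j)=0$ for $i\neq j$, exactly as in the discussion preceding Proposition~\ref{totally-umbilical}. It is also convenient to record that $\|\beta\|_*^2=\frac{n-1}{n}$ together with the above diagonal form gives $\|\bar\eta\|_*^2=1/n^2$, so $\|\bar\eta\|_*$ is constant.

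The heart of the proof is to show that $\Delta^\perp$ is totally geodesic with respect to the Moebius metric, that is, $\langle\nabla^*_TT,X_j\rangle^*=0$ for $2\le j\le n$. I would extract these coefficients from the conformal Codazzi equations. Evaluating~\eqref{C1} on the triples $(T,X_j,X_j)$ and $(T,X_j,T)$ expresses $\omega^f(X_j)$ and $\omega^f(T)$ in terms of ${}^f\nabla^\perp\bar\eta$ and of the mixed coefficients $\langle\nabla^*_TT,X_j\rangle^*$ and $\langle\nabla^*_{X_j}X_j,T\rangle^*$, the latter being already under control because $\Delta=E_\eta$ is umbilical for $\langle\,,\,\rangle^*$. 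Feeding these into~\eqref{C2} evaluated on $(T,X_j,T)$, and using $\bar\eta_1=-(n-1)\bar\eta$ and $\|\bar\eta\|_*^2=1/n^2$, produces an identity of the form $\big(\psi_1-\psi_j+\tfrac{n-1}{n}\big)\langle\nabla^*_TT,X_j\rangle^*=X_j(\psi_1)$, where $\psi_i=\psi(X_i,X_i)$. The two remaining points are to show that $\psi_1$ is constant along $\Delta$, so that the right-hand side vanishes, and that the scalar factor on the left cannot vanish; both I would obtain by further use of~\eqref{C2} together with the conformal Gauss equation~\eqref{G} and Proposition~\ref{blaschke2}, which tie the $\psi_i$ to the $\langle\,,\,\rangle^*$-sectional curvatures and to $\|\bar\eta\|_*$. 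I expect this step—the careful juggling of the several normal-bundle-valued Codazzi identities and the verification that the coefficient of $\langle\nabla^*_TT,X_j\rangle^*$ stays nonzero—to be the main obstacle.

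With $\Delta^\perp$ totally geodesic for $\langle\,,\,\rangle^*$, the computation in the proof of Proposition~\ref{totally-umbilical} (which uses only the conformal change of connection and the simultaneous diagonalization) shows that $\Delta^\perp$ is spherical for $\langle\,,\,\rangle_f$, so its integral curves are extrinsic circles of $M^n$. All three hypotheses of Theorem~\ref{cor:dupinpcn} then hold with $k=n-1$, and since $n-k=1$ the base immersion is a curve $\gamma\colon I\to\mathbb{R}^{p+1}$, $\gamma\colon I\to\mathbb{S}^{p+1}$ or $\gamma\colon I\to\mathbb{H}^{p+1}$ in cases (i), (ii), (iii) respectively. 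Reading these against the models $\Theta$ described after Theorem~\ref{cor:dupinpcn} identifies them with a cylinder over $\gamma\colon I\to\mathbb{Q}^{p+1}_{0}$, a generalized cone over $\gamma\colon I\to\mathbb{Q}^{p+1}_{1}$, and a rotational submanifold over $\gamma\colon I\to\mathbb{Q}^{p+1}_{-1}$, which is the claimed trichotomy.
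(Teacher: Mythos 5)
Your skeleton matches the paper's: reduce to Theorem \ref{cor:dupinpcn} with $k=n-1$, note that $\Delta^{\perp}$ is automatically umbilical because it is one-dimensional, prove that $\Delta^{\perp}$ is totally geodesic for $\langle\,,\,\rangle^{*}$, and then invoke Proposition \ref{totally-umbilical} to get sphericity for $\langle\,,\,\rangle_{f}$. But the middle step --- the heart of the proof, as you say yourself --- is not actually carried out, and the route you sketch for it has a genuine gap. Your plan hinges on two unestablished claims: that $X_j(\psi_1)=0$ along $\Delta$, and that the coefficient $\psi_1-\psi_j+\tfrac{n-1}{n}$ never vanishes. Neither follows from anything you have derived, and the second is a pointwise nondegeneracy condition for which there is no a priori reason: nothing in the hypotheses prevents $\psi_1-\psi_j=-\tfrac{n-1}{n}$ on a subset of $M^{n}$, and even if it failed only on a small set you would obtain total geodesy only off that set. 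So as written the proposal is an incomplete proof, not a complete alternative one.

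The missing idea is that you never exploit $\dim\Delta=n-1\geq 2$ to evaluate \eqref{C1} on triples lying \emph{entirely inside} $\Delta$. You used \eqref{C1} only on triples with one leg along $T=X_1$, which produces a single expression for $\omega(X_j)$ and forces you into \eqref{C2}. Taking instead $(X_j,X_i,X_i)$ with $2\leq i\neq j\leq n$ in \eqref{C1} gives a second, independent expression $\omega(X_j)={}^{f}\nabla^{\perp}_{X_j}\bar{\eta}$; equating it with your mixed-triple expression and using $\beta(X_1,X_1)=-(n-1)\bar{\eta}$ yields
\begin{equation*}
{}^{f}\nabla^{\perp}_{X_j}\bar{\eta}=\langle \nabla^{*}_{X_1}X_1, X_j\rangle^{*}\,\bar{\eta},
\end{equation*}
which is the paper's key identity \eqref{l1}. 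Now take the inner product with $\bar{\eta}$: since $\|\bar{\eta}\|=1/n$ is constant --- a fact you derived but then used only inside the \eqref{C2} computation --- the left-hand side vanishes, so $\langle \nabla^{*}_{X_1}X_1,X_j\rangle^{*}=0$ for all $j\geq 2$, with no appeal to \eqref{C2}, to derivatives of $\psi$, or to any nondegeneracy of Blaschke eigenvalues. This closes the gap and is exactly the paper's argument; the rest of your write-up (Proposition \ref{totally-umbilical}, then Theorem \ref{cor:dupinpcn} with $n-k=1$ and the identification of the three models via $\Theta$) is correct as it stands.
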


\begin{proof}
    Let $\bar{\eta}=\beta(X_i,X_i)$ for $2 \le i \le n$, and consider the smooth distributions $$\Delta=\spa\{X_i: 2 \le i \le n\} \quad \mbox{and} \quad \Delta^{\perp}=\{X_1\}.$$
    Since $\tr \, \beta =0$ and $||\beta||^{2}_{*}=\frac{n-1}{n}$, we have $$\beta(X_1,X_1)+(n-1)\bar{\eta}=0 \quad \mbox{and} \quad ||\beta(X_1,X_1)||^{2}+(n-1)||\bar{\eta}||^{2}=\frac{n-1}{n},$$ 
    hence 
    $$||\beta(X_1,X_1)||=\frac{n-1}{n}\quad \mbox{and} \quad ||\bar{\eta}||=\frac{1}{n}.$$ 
 Therefore, there is a  unit normal vector field $\xi_1 \in \Gamma(N_{f}M)$ such that \begin{equation}\label{moebiusnormal1}
	    \beta(X_1,X_1)=\frac{n-1}{n}\xi_1 \quad \mbox{and} \quad \bar{\eta}=-\frac{1}{n}\xi_1.
	\end{equation} 
The conformal Codazzi equation \begin{equation*}
		    ({^{f}\nabla^{\perp}_{X_j}}\beta)(X_i,X_i)-({^{f}\nabla^{\perp}_{X_i}}\beta)(X_j,X_i)=\omega((X_j \wedge X_i)X_i)
		\end{equation*} for $2 \le i \ne j \le n$ yields $\omega(X_j)={^{f}\nabla^{\perp}_{X_j}}\bar{\eta}$.   On the other hand, from \begin{equation*}
		    ({^{f}\nabla^{\perp}_{X_j}}\beta)(X_1,X_1)-({^{f}\nabla^{\perp}_{X_1}}\beta)(X_j,X_1)=\omega((X_j \wedge X_1)X_1),
		\end{equation*} for $2 \le j \le n$, we obtain \begin{equation*}
		    \omega(X_j)={^{f}\nabla^{\perp}_{X_j}}\beta(X_1,X_1)+\langle \nabla^{*}_{X_1}X_1, X_j \rangle^{*}(\bar{\eta}-\beta(X_1,X_1)).
		\end{equation*} Since $\beta(X_1,X_1)=-(n-1)\bar{\eta}$, then \begin{equation}\label{l1}
		{^{f}\nabla^{\perp}_{X_j}\bar{\eta}}=\langle \nabla^{*}_{X_1}X_1, X_j \rangle^{*}\bar{\eta}.
		\end{equation}
  Taking the inner product of the preceding equation with $\bar{\eta}$ gives $
		    \langle \nabla^{*}_{X_1}X_1, X_j \rangle^{*}=0$  for $2 \le j \le n$. Therefore, $\Delta^{\perp}$ is totally geodesic with respect to the Moebius metric, hence spherical with respect to $\langle\,,\,\rangle_{f}$ with $(\grad_{f}\log \rho)_{\Delta}$ as its mean curvature vector field by Proposition \eqref{totally-umbilical}. Taking into account that  $\Delta=E_{\eta}$ and that $\dim \Delta~\geq 2$, the statement follows from Theorem \ref{cor:dupinpcn}.
\end{proof}

The next lemma will also be used in the proof of Theorem \ref{theorem-main}. 

 \begin{lemma}\label{moebius-closed}
		Let $f:M^{n}\rightarrow \mathbb{R}^{n+p}$ be an isometric immersion with constant Moebius curvature and flat normal bundle. Then the Moebius form of $f$ is closed.
	\end{lemma}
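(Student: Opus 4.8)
The plan is to use the conformal Ricci equation \eqref{R1}, namely $d\omega(X,Y)=\beta(Y,\hat\psi X)-\beta(X,\hat\psi Y)$, and show that the right-hand side vanishes identically under the constant Moebius curvature hypothesis. Since $d\omega$ is controlled by how the Blaschke endomorphism $\hat\psi$ interacts with the Moebius second fundamental form $\beta$, the strategy is to prove that $\hat\psi$ and $\beta$ are \emph{simultaneously diagonalizable}: if there is an orthonormal frame $X_1,\dots,X_n$ (with respect to $\langle\,,\,\rangle^*$) that diagonalizes both $\beta$ and $\hat\psi$, then for such frame vectors $\beta(X_j,\hat\psi X_i)=\psi_i\,\beta(X_j,X_i)$ is symmetric in $i,j$ up to the eigenvalue factors, and a short computation forces $\beta(X_i,\hat\psi X_j)-\beta(X_j,\hat\psi X_i)=0$, giving $d\omega=0$.

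First I would exploit the flat normal bundle. By the structure recalled in Section \ref{preli}, flatness of the normal bundle gives an orthogonal decomposition $TM=\oplus_i E_{\eta_i}$ into the eigenspaces of the (Moebius) second fundamental form, so $\beta$ is automatically diagonalized by any frame adapted to this splitting; equivalently $\beta(X,Y)=\sum_i \langle X,\bar X_i\rangle^*\langle Y,\bar X_i\rangle^*\bar\eta_i$ on such a frame, with the $\bar\eta_i$ the Moebius principal normals. The real content is therefore to show that $\hat\psi$ respects this same decomposition. Here I would invoke the conformal Gauss equation \eqref{G}: using Proposition \ref{blaschke2}, the Blaschke tensor is expressed through $\Ric^*$, $III_\beta$ and $s^*$, and the constant Moebius curvature assumption pins down $\langle R^*(X,Y)Z,W\rangle^*$ completely (it equals $c$ times the curvature tensor of a space form). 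Feeding this into \eqref{G} and evaluating on the $\beta$-eigenframe, the off-diagonal components of $\psi$ should be seen to coincide with off-diagonal components of $III_\beta$ (which vanish since $\beta$ is diagonal in that frame) plus multiples of $\langle X,Y\rangle^*$ (which vanish off the diagonal). Thus $\psi(X_i,X_j)=0$ for $i\neq j$, i.e.\ $\hat\psi$ is diagonal in the same frame.

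Once simultaneous diagonalizability is established, I would plug back into \eqref{R1}: for $i\neq j$ the only potentially nonzero contributions to $d\omega(X_i,X_j)$ are $\beta(X_j,\hat\psi X_i)-\beta(X_i,\hat\psi X_j)=\psi_i\beta(X_j,X_i)-\psi_j\beta(X_i,X_j)$, and since $\beta(X_i,X_j)=0$ for $i\neq j$ (diagonal) while for $i=j$ the expression is manifestly antisymmetric and hence zero, the right-hand side vanishes. Therefore $d\omega^f=0$.

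The main obstacle I anticipate is the step that forces $\hat\psi$ to be diagonal in the $\beta$-eigenframe. The constant Moebius curvature hypothesis enters only through $R^*$, so I must carefully combine the Gauss equation \eqref{G} with the intrinsic formula of Proposition \ref{blaschke2} to control \emph{all} components of $\psi$, not just show the sectional-curvature identity \eqref{sec}. A subtlety is that $III_\beta$ need not be obviously diagonal a priori; I would need the decomposition $TM=\oplus_i E_{\eta_i}$ together with orthogonality of distinct Moebius principal normals (which themselves lie along mutually orthogonal directions by the flat normal bundle) to conclude $III_\beta(X_i,X_j)=\sum_k\langle\beta(X_i,X_k),\beta(X_j,X_k)\rangle=0$ for $i\neq j$. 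Handling possible repeated eigenvalues of $\beta$ (the high-multiplicity principal normal $\bar\eta$) requires choosing the frame within each eigenspace so that $\psi$ is also diagonalized there, which is possible since $\psi$ restricted to each such eigenspace is symmetric; this is where care is needed but no essential difficulty arises.
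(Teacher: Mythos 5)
Your proposal is correct and follows essentially the same route as the paper: flat normal bundle gives a frame diagonalizing $\beta$, constant Moebius curvature makes $\Ric^{*}$ a multiple of $\langle\,,\,\rangle^{*}$, Proposition \ref{blaschke2} then forces $\psi$ to be diagonal in that frame (since $III_{\beta}$ is), and \eqref{R1} gives $d\omega=0$. Your final worry about refining the frame within repeated eigenspaces of $\beta$ is actually unnecessary: because $\Ric^{*}=c(n-1)\langle\,,\,\rangle^{*}$ and $III_{\beta}(X_i,X_j)=0$ for $i\neq j$ in any $\beta$-adapted frame, $\psi$ is automatically diagonal there, exactly as in the paper.
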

 \begin{proof}
     The conformal Ricci equation $$\langle R^{\perp}(X,Y)\xi, \eta \rangle=\langle [B_{\xi}, B_{\eta}]X,Y\rangle^{*}$$ for all $X,Y \in \mathfrak{X}(M)$ and $\xi, \eta \in \Gamma(N_{f}M)$, shows that there exists an orthonormal frame $\mathcal{B}:=\{X_1, \ldots, X_n\}$ with respect to the Moebius metric such that $$\beta(X_i,X_j)=0, \quad i \ne j.$$

     Let $c \in \mathbb{R}$ be the common value of the sectional curvatures of $(M^{n}, \langle \,,\,\rangle^{*})$. Then the Ricci tensor of $(M, \langle \,,\,\rangle)$ satisfies $$\Ric^{*}(X,Y)=c(n-1)\langle X,Y\rangle^{*}$$ for all $X,Y \in \mathfrak{X}(M)$. Hence $\mathcal{B}$ diagonalizes $\Ric^{*}$. 
     Using the relation \begin{equation*}\label{relation-blaschke}
			(n-2)\psi(X,Y)=\Ric^{*}(X,Y)+III_{\beta}(X,Y)-\frac{n^{2}s^{*}+1}{2n}\langle X,Y \rangle^{*}, 
		\end{equation*} where $s^{*}$ denotes the scalar curvature of $(M, \langle \,,\,\rangle^{*}),$ we conclude that the Blaschke tensor $\psi$ is also diagonalizable by $\mathcal{B}$. Therefore $\omega$ is closed  by \eqref{R1}.
 \end{proof}

The next result contains as a  particular case the classification of umbilic-free hypersurfaces with constant Moebius curvature in Theorem $1.1$ of
 \cite{classif-moebius-hyper} and  Theorem $4.9$ of \cite{deform}.

\begin{lemma}\label{ode}
	The Moebius metric $\langle \,,\,\rangle^{*}=\kappa^{2}(ds^{2}+\sigma_{-\Tilde{c}})$ of the isometric immersion $f:I \times \mathbb{Q}_{-\Tilde{c}}^{n-1}\to \mathbb{R}^{n+p}$ defined by $$f=\Theta \circ (\gamma, \Id),$$ where $\sigma_{-\Tilde{c}}$ denotes the canonical metric of $\mathbb{Q}^{n-1}_{-\Tilde{c}}$, $\gamma:I \to \mathbb{Q}^{p+1}_{\Tilde{c}}$ and $\Theta=\Id: \mathbb{R}^{n+p}\to \mathbb{R}^{n+p}$ if $\Tilde{c}=0$ or $\Theta$ is as in Examples \ref{cilindro}- $(ii)$ and $(iii)$  if $\Tilde{c} \ne 0$, has constant Moebius curvature $c$ if and only if the first curvature function $\kappa(s)$ is given respectively by $$\kappa(s)=\begin{cases}
			\frac{1}{r}, \quad \,\quad c=0 \,\, \mbox{and} \,\, s\in \mathbb{R}\\
			\frac{1}{\sqrt{-c}s}, \quad c<0 \,\, \mbox{and}\,\, s>0,
		\end{cases}$$
		$$ \kappa(s)=\frac{1}{\sqrt{-c}\sin s}, \,\,\,s \in (0, \pi), \,\,\,\,c<0,$$
		and
		$$\kappa(s)=\begin{cases}
			\frac{1}{\sqrt{c}\cosh{s}}, \,\,\quad c>0 \,\,\mbox{and}\,\, s \in \mathbb{R}\\
			\frac{1}{\sqrt{-c}\sinh{s}}, \quad c<0\,\, \mbox{and} \,\, s>0\\
			e^{s}, \qquad \qquad c=0\,\, \mbox{and}\,\, s \in \mathbb{R}.
		\end{cases}$$
	\end{lemma}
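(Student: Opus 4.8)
The plan is to apply Lemma \ref{moebius-constante} directly. That lemma, specialized to the case $\ell=p-1$ so that the base manifold is a curve $\gamma\colon I\to\mathbb{Q}^{p+1}_{\tilde c}$ (so $p-\ell=1$), tells us that $f=\Theta\circ(\gamma,\Id)$ has constant Moebius curvature $c$ if and only if the induced metric $ds^2$ on $I$ has constant curvature $\tilde c$ (automatic, since $I$ is $1$-dimensional) and the two conditions in \eqref{condicoes} hold for $\kappa=\|\mathcal{H}^\gamma\|$, namely
\begin{equation*}
\Hess\,(1/\kappa)+\tilde c\,(1/\kappa)\,ds^2=0
\qquad\text{and}\qquad
\|\grad\,(1/\kappa)\|^2+\tilde c\,(1/\kappa)^2=-c,
\end{equation*}
where all objects are computed with respect to $ds^2$. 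Here I have used that for a curve the role of $(p-\ell)\|\mathcal{H}^g\|$ is played by the first curvature function $\kappa(s)$, so $\mu=\kappa$ and $(p-\ell)^2=1$. The strategy is therefore purely to solve this pair of ODEs in the arclength parameter $s$ for each of the three values $\tilde c=0,1,-1$, matching the three displayed cases in the statement.

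First I would rewrite the conditions in one dimension. Writing $\phi=1/\kappa$ and denoting by a prime the derivative with respect to arclength $s$, the Hessian condition becomes the second-order linear ODE $\phi''+\tilde c\,\phi=0$, while the gradient condition becomes the first-order constraint $(\phi')^2+\tilde c\,\phi^2=-c$. Note that the first-order constraint is in fact a first integral of the second-order equation: differentiating $(\phi')^2+\tilde c\,\phi^2$ gives $2\phi'(\phi''+\tilde c\,\phi)=0$, so the quantity $(\phi')^2+\tilde c\,\phi^2$ is automatically constant along solutions of $\phi''+\tilde c\,\phi=0$, and the second condition merely fixes that constant to be $-c$. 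Thus I would solve $\phi''+\tilde c\,\phi=0$ in each case and then impose $(\phi')^2+\tilde c\,\phi^2=-c$ to pin down the free constants, discarding solutions that violate the positivity $\kappa=1/\phi>0$ (equivalently $\phi>0$) or the sign constraints on $c$.

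Case $\tilde c=0$: the equation is $\phi''=0$, so $\phi=as+b$ is affine, and the constraint gives $(\phi')^2=a^2=-c$, forcing $c\le 0$. If $c=0$ then $\phi$ is constant and $\kappa=1/r$ for some $r>0$; if $c<0$ then $a=\sqrt{-c}$ and, absorbing $b$ by a translation of $s$, $\phi=\sqrt{-c}\,s$, giving $\kappa=1/(\sqrt{-c}\,s)$ for $s>0$. Case $\tilde c=1$ (spherical base): the equation is $\phi''+\phi=0$, so $\phi=A\cos s+B\sin s$; the constraint $(\phi')^2+\phi^2=A^2+B^2=-c$ forces $c<0$, and after a phase shift $\phi=\sqrt{-c}\,\sin s$, so $\kappa=1/(\sqrt{-c}\,\sin s)$ for $s\in(0,\pi)$. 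Case $\tilde c=-1$ (hyperbolic base): the equation is $\phi''-\phi=0$, with general solution $\phi=A\cosh s+B\sinh s$, equivalently $\phi=Ce^{s}+De^{-s}$; the constraint reads $(\phi')^2-\phi^2=-c$, i.e. $-4CD=-c$. Writing $\phi$ in the hyperbolic form, this yields $A^2-B^2=-c$, so the three subcases $c>0,c<0,c=0$ correspond respectively to $\phi$ being a multiple of $\cosh$, of $\sinh$, or a pure exponential: $\kappa=1/(\sqrt{c}\cosh s)$, $\kappa=1/(\sqrt{-c}\sinh s)$, and $\kappa=e^{-s}$ (replacing $s\mapsto -s$ gives the stated $e^{s}$). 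I would then record in each case the elementary positivity and domain checks.

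The calculation is essentially routine once the reduction is in place, so there is no serious obstacle; the only points demanding care are \textbf{(a)} confirming that, for a one-dimensional base, the identification $\mu=(p-\ell)\|\mathcal{H}^g\|=\kappa$ is exactly the first curvature function of $\gamma$ and that $ds^2$ on $I$ trivially has the required constant curvature, so that Lemma \ref{moebius-constante} applies verbatim with $p-\ell=1$; and \textbf{(b)} the bookkeeping of integration constants, where translations and phase shifts in $s$ and the freedom in the sign of $c$ must be used to bring each solution into exactly the normal form displayed, while discarding the branches incompatible with $\kappa>0$ and with the stated sign restrictions on $c$. I expect \textbf{(b)} to be the place where one must be most careful, but it is bookkeeping rather than a genuine difficulty.
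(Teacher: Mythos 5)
Your proposal is correct and follows essentially the route the paper leaves implicit: Lemma \ref{moebius-constante} specialized to a one-dimensional base (where $\mu=(p-\ell)\|\mathcal{H}^{g}\|$ becomes the first curvature $\kappa$ and condition (i) of Lemma \ref{constante} is vacuous), followed by solving the pair \eqref{condicoes} in arclength, exactly as Corollary \ref{meancurvature} does in higher dimensions --- indeed the restriction of the linear solutions $\langle v,x\rangle+a$ there to unit-speed curves in $\mathbb{R}^{1}$, $\mathbb{S}^{1}$, $\mathbb{H}^{1}$ yields precisely your affine, sine and hyperbolic normal forms. Two minor points: Lemma \ref{moebius-constante} is stated only for $0\le\ell\le p-2$, so you should note explicitly that its proof goes through verbatim for $\ell=p-1$ (Examples \ref{cilindro} already allow this range); and in the case $\tilde{c}=-1$ your relation should read $A^{2}-B^{2}=4CD=c$ rather than $-c$ (since $(\phi')^{2}-\phi^{2}=B^{2}-A^{2}=-c$), a sign slip that fortunately does not affect your correct assignment of the subcases $c>0$, $c<0$, $c=0$ to $\cosh$, $\sinh$ and pure exponential.
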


   \begin{theorem}\label{theorem-main-2}  Let $f:M^{n}\to \mathbb{R}^{n+p}$,  $n \geq 3$, be an isometric immersion with constant Moebius curvature $c$ and flat normal bundle with exactly two distinct principal normal vector fields. Then $f(M)$ is the image by a conformal transformation of $\mathbb{R}^{n+p}$ of an open subset of a submanifold given in Example \ref{cilindro}, with the first curvature function $\kappa(s)$ of the curve $\gamma\colon I\to \mathbb{Q}_{\tilde{c}}^{p+1}$ given by Lemma \ref{ode},  with $\tilde{c}=0, 1$ or $-1$, respectively.
\end{theorem}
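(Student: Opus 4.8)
The plan is to reduce the statement to Proposition \ref{prop.reduction} by proving that, under the hypotheses, one of the two principal normal vector fields must have multiplicity $n-1$, and then to pin down the first curvature function through Lemma \ref{ode}.

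First I would invoke Lemma \ref{moebius-closed}: since $f$ has constant Moebius curvature and flat normal bundle, the Moebius form $\omega^f$ is closed, so the conformal Ricci equation \eqref{R1} provides an orthonormal frame $X_1,\dots,X_n$ (with respect to $\langle\,,\,\rangle^*$) that diagonalizes $\beta$. Grouping the frame according to the two principal normals, $\beta$ takes exactly two values $\bar\eta_1$ and $\bar\eta_2$ on the eigenspaces $E_{\eta_1}$ and $E_{\eta_2}$, of respective multiplicities $m_1,m_2$ with $m_1+m_2=n$. Because $\beta$ is traceless, $m_1\bar\eta_1+m_2\bar\eta_2=0$, so the two vectors are collinear; I write $\bar\eta_1=a\xi$ and $\bar\eta_2=b\xi$ for a unit normal $\xi$, with $m_1a+m_2b=0$, while $\|\beta\|_*^2=\frac{n-1}{n}$ guarantees $(a,b)\neq(0,0)$.

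The core of the proof is to exclude the case $m_1,m_2\geq 2$. Assuming it, choose orthonormal $X_1,X_2\in E_{\eta_1}$ and $Y_1,Y_2\in E_{\eta_2}$. Since $\langle\,,\,\rangle^*$ has constant curvature $c$ and $\beta$ is diagonal, evaluating the conformal Gauss equation \eqref{G} on the pairs $(X_1,X_2)$, $(Y_1,Y_2)$, $(X_1,Y_1)$ and $(X_2,Y_2)$ yields $c=a^2+\psi(X_1,X_1)+\psi(X_2,X_2)$, $c=b^2+\psi(Y_1,Y_1)+\psi(Y_2,Y_2)$, and $c=ab+\psi(X_i,X_i)+\psi(Y_i,Y_i)$ for $i=1,2$. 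Adding the first two identities and subtracting the sum of the last two, the Blaschke terms cancel and I obtain $(a-b)^2=0$, hence $a=b$; combined with $m_1a+m_2b=0$ this forces $a=b=0$, contradicting $(a,b)\neq(0,0)$. Therefore $\min(m_1,m_2)=1$, so the remaining principal normal has multiplicity $n-1$ (for $n=3$ this is automatic, as the only splittings are $(1,2)$). I expect this dichotomy to be the main obstacle, since everything downstream is a direct application of cited results.

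With a principal normal of multiplicity $n-1$ and $\omega^f$ closed, Proposition \ref{prop.reduction} then shows that, up to a conformal transformation of $\mathbb{R}^{n+p}$, $f(M)$ is an open subset of a cylinder, a generalized cone, or a rotational submanifold over a curve $\gamma\colon I\to\mathbb{Q}^{p+1}_{\tilde c}$ with $\tilde c=0,1,-1$ respectively, i.e.\ a submanifold as in Example \ref{cilindro} with $\ell=p-1$. It finally remains to impose constant Moebius curvature: the Moebius metric of such an $f$ has the form $\langle\,,\,\rangle^*=\kappa^2(ds^2+\sigma_{-\tilde c})$, so Lemma \ref{ode} forces $\kappa(s)$ to be exactly one of the listed first curvature functions, which completes the proof.
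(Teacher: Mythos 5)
Your proposal is correct, and its overall skeleton is exactly the paper's: Lemma \ref{moebius-closed} gives the closed Moebius form, Proposition \ref{prop.reduction} gives the conformal reduction to a cylinder, generalized cone or rotational submanifold over a curve $\gamma\colon I\to\mathbb{Q}^{p+1}_{\tilde c}$ (Example \ref{cilindro} with $\ell=p-1$), and Lemma \ref{ode} pins down $\kappa$. The one place where you genuinely add something is the multiplicity dichotomy: the paper's proof simply takes a frame with $\beta(X_i,X_i)=\bar\eta$ for $2\le i\le n$, i.e.\ it asserts the splitting $(1,n-1)$ without argument; implicitly this rests on Theorem \ref{theorem-DOV} (constant Moebius curvature makes $(M^n,\langle\,,\,\rangle_f)$ conformally flat, and then at most one principal normal can have multiplicity greater than one), which however requires $n\ge 4$, the case $n=3$ being trivial since $3=1+2$ is the only splitting. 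Your alternative — evaluating the conformal Gauss equation \eqref{G} on the pairs $(X_1,X_2)$, $(Y_1,Y_2)$, $(X_1,Y_1)$, $(X_2,Y_2)$ so that the Blaschke terms cancel and $(a-b)^2=0$ — is a correct, elementary replacement that treats all $n\ge 3$ uniformly and uses only constancy of the Moebius curvature rather than the full strength of \cite{DOV}; note that $a=b$ already means $\bar\eta_1=\bar\eta_2$, hence $\eta_1=\eta_2$, contradicting the hypothesis of exactly two distinct principal normals, so you need not even pass through $a=b=0$ and the norm of $\beta$. Everything downstream is a direct application of the cited results, exactly as in the paper.
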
 

\begin{proof}     Since $f$ has constant Moebius curvature and flat normal bundle, the Moebius form $\omega^f$ is closed by Lemma \ref{moebius-closed}. By the conformal Ricci equation,  there exists an orthonormal frame $X_1, \ldots X_n$ with respect to the metric $\langle\,,\,\rangle^{*}$  that diagonalizes $\beta$ and $\psi$ simultaneously, with $\beta(X_i,X_i)=\bar{\eta}$  for all $ 2 \le i  \le n.$  
     By Proposition \ref{prop.reduction}, $f(M)$ is the image by a conformal transformation of $\mathbb{R}^{n+p}$ of an open subset of a cylinder, a generalized cone or a rotational submanifold over a curve $\gamma:I \to \mathbb{Q}^{p+1}_{\Tilde{c}}$, where $\Tilde{c}=0,1$ or $-1$, respectively. Finally, the assumption that the Moebius curvature of $f$ is constant
     implies that the first curvature function $\kappa(s)$ of  $\gamma$ is given as in Lemma \ref{ode}. \end{proof}

\subsection{Submanifolds with at least three principal normal vector fields}

 Let $f:M^{n}\to \mathbb{R}^{n+p}$, $n-3\geq p \geq 2$, be a proper conformally flat isometric immersion with flat normal bundle whose Moebius form is closed and let $\eta$ be a   principal normal vector field that has multiplicity $n-p+\ell$,  $0 \le \ell \le p-2$, whose existence is guaranteed by Theorem \eqref{theorem-moore}. Let $\bar \eta$ be the associated Moebius principal normal vector field. We start by finding a suitable orthogonal frame of $N_fM$. 

\begin{lemma}\label{lema-ortogonal}
The subset $\{\beta(X_i,X_i)-\Bar{\eta}: 1 \le i \le p-\ell\}$ of $N_{f}M$ is orthogonal and $\Delta^{\perp}:=E_\eta^\perp$ is integrable.
\end{lemma}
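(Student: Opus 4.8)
Write $b_i := \beta(X_i,X_i)$, so that $b_i = \bar{\eta}$ for $p-\ell+1 \le i \le n$ while $b_1, \dots, b_{p-\ell}$ are the simple Moebius principal normals, and the goal becomes showing that $\{b_i - \bar{\eta} : 1 \le i \le p-\ell\}$ is orthogonal (since $b_i - \bar{\eta} = \rho^{-1}(\eta_i - \eta)$). The plan is to first extract a clean pointwise identity for $\langle b_i, b_j\rangle$. Because $\langle\,,\,\rangle^{*}$ is conformal to the conformally flat metric $\langle\,,\,\rangle_f$, it is itself conformally flat, and as $n \ge 5$ its Weyl tensor vanishes. The frame $\{X_i\}$ diagonalizes $\Ric^{*}$: indeed $III_\beta$ is diagonal in this frame because $\beta$ is, so the Blaschke relation of Proposition \ref{blaschke2} writes $\Ric^{*}$ as a diagonal combination of $\psi$, $III_\beta$ and $\langle\,,\,\rangle^{*}$.

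The first step is to compute the Moebius sectional curvature $K^{*}(X_i\wedge X_j)$, $i \ne j$, in two ways. Vanishing of the Weyl tensor gives
$$K^{*}(X_i\wedge X_j) = \frac{1}{n-2}\big(\Ric^{*}(X_i,X_i) + \Ric^{*}(X_j,X_j)\big) - \frac{S^{*}}{(n-1)(n-2)},$$
where $S^{*}=\tr\,\Ric^{*}$, while the conformal Gauss equation \eqref{G} with $\beta$ diagonal gives $K^{*}(X_i\wedge X_j) = \langle b_i, b_j\rangle + \psi(X_i,X_i) + \psi(X_j,X_j)$. Substituting the Blaschke relation into the second expression and equating it with the first, the $\Ric^{*}$ terms cancel and, after collecting the scalar-curvature contributions, I expect to obtain the identity
$$\langle b_i, b_j\rangle = \frac{1}{n-2}\Big(\frac{1}{n} - \|b_i\|^{2} - \|b_j\|^{2}\Big), \qquad i \ne j.$$

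Granting this identity, orthogonality follows quickly. Applying it to a pair $i \ne j$ with both indices in $\{p-\ell+1, \dots, n\}$ — available because the multiplicity $n-p+\ell \ge 3$ — gives $b_i = b_j = \bar{\eta}$ and hence $\|\bar{\eta}\|^{2} = 1/n^{2}$. Expanding $\langle b_i - \bar{\eta}, b_j - \bar{\eta}\rangle$ for $1 \le i \ne j \le p-\ell$ using the identity for the pairs $(b_i,b_j)$, $(b_i,\bar{\eta})$ and $(b_j,\bar{\eta})$ and inserting $\|\bar{\eta}\|^{2}=1/n^{2}$ then collapses everything to $0$, proving the first assertion. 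For integrability of $\Delta^{\perp}=E_\eta^{\perp}$, I use the Codazzi equation \eqref{codazzi3} for the three distinct principal normals $\eta_i,\eta_j$ (simple, $1\le i\ne j\le p-\ell$) and $\eta$ (the $X_a$-direction, $a \ge p-\ell+1$), which reads $\langle\nabla_{X_j}X_i, X_a\rangle(\eta_i-\eta) = \langle\nabla_{X_i}X_j, X_a\rangle(\eta_j-\eta)$. Since $\eta_i-\eta = \rho(b_i-\bar{\eta})$ and $\eta_j-\eta=\rho(b_j-\bar{\eta})$ are orthogonal and nonzero by the first part, pairing this relation with $\eta_i-\eta$ forces both coefficients to vanish, so $\langle [X_i,X_j], X_a\rangle = 0$ for all $a \ge p-\ell+1$ and $[X_i,X_j]\in\Gamma(\Delta^{\perp})$.

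The main obstacle is the first step: correctly tracking the global scalar-curvature terms when combining the conformal Gauss equation, the Blaschke relation, and the Weyl-vanishing curvature formula, so that the $\Ric^{*}$ and $S^{*}$ contributions cancel to leave the clean pointwise identity for $\langle b_i, b_j\rangle$. Once that identity is in hand, both the norm $\|\bar{\eta}\|^{2}=1/n^{2}$ and the orthogonality are purely algebraic, and integrability reduces to the orthogonality together with \eqref{codazzi3}.
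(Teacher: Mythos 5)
Your proposal is correct, and your key identity does check out: substituting the Blaschke relation of Proposition \ref{blaschke2} (with $III_\beta(X_i,X_i)=\|b_i\|^2$, since $\beta$ is diagonal in this frame) into the reduced conformal Gauss equation and equating with the Weyl-vanishing formula, the Ricci terms cancel, and since $S^{*}=\tr\Ric^{*}=n(n-1)s^{*}$ the scalar terms combine as $\frac{1}{n-2}\bigl(\frac{n^{2}s^{*}+1}{n}-ns^{*}\bigr)=\frac{1}{n(n-2)}$, giving exactly $\langle b_i,b_j\rangle=\frac{1}{n-2}\bigl(\frac{1}{n}-\|b_i\|^{2}-\|b_j\|^{2}\bigr)$ for $i\ne j$; the final expansion then collapses to $\frac{1}{n-2}\bigl(n\|\bar\eta\|^{2}-\frac{1}{n}\bigr)$, which vanishes precisely because $\|\bar\eta\|^{2}=1/n^{2}$. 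The paper takes a shorter but mathematically equivalent route: instead of the explicit Weyl formula it invokes Kulkarni's four-index identity $K^{*}(X_i,X_j)+K^{*}(X_k,X_r)=K^{*}(X_i,X_k)+K^{*}(X_j,X_r)$ with $k\ne r\ge p-\ell+1$, substitutes the same reduced Gauss equation \eqref{eq.gauss}, and observes that the Blaschke terms cancel pairwise, yielding $\langle b_i-\bar\eta,\,b_j-\bar\eta\rangle=0$ in one line with no need to know $\|\bar\eta\|$ (your Weyl formula is of course the source of Kulkarni's identity, so the content is the same). The trade-off is that your version requires the intermediate step $\|\bar\eta\|^{2}=1/n^{2}$, which you legitimately extract from the same identity applied within the $\bar\eta$-block (valid since the multiplicity $n-p+\ell\ge n-p\ge 3\ge 2$); the dividend is that you obtain $\|\bar\eta\|=1/n$ as a byproduct, whereas the paper only proves this later, in Proposition \ref{decomp}, by a purely algebraic argument from $\tr\beta=0$ and $\|\beta\|_{*}^{2}=\frac{n-1}{n}$. (A minor remark: you do not actually need the frame to diagonalize $\Ric^{*}$ for the Weyl formula, since for an orthonormal pair the cross-term $\Ric^{*}(X_i,X_j)\langle X_i,X_j\rangle^{*}$ vanishes automatically.) Your integrability argument coincides with the paper's: the orthogonality of $\eta_i-\eta$ and $\eta_j-\eta$ together with the Codazzi equation \eqref{codazzi3}, taking $\eta_l=\eta$, forces $\langle\nabla_{X_j}X_i,X_a\rangle=0=\langle\nabla_{X_i}X_j,X_a\rangle$ for $X_a\in\Gamma(E_\eta)$, hence $[X_i,X_j]\in\Gamma(E_\eta^{\perp})$.
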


\begin{proof}
    The conformal Gauss equation of $f$ reduces to \begin{equation}\label{eq.gauss}
		K^{*}(X_k,X_r)=\langle R^{*}(X_k,X_r)X_r,X_k\rangle^{*}=\langle \beta(X_k,X_k), \beta(X_r,X_r)\rangle+\psi(X_k,X_k)+\psi(X_r,X_r),
	\end{equation} for all $1\leq k \ne r \leq n$.  Since $(M, \langle \,,\,\rangle^{*})$ is conformally flat,   it follows from Kulkarni's formula (see \cite{Kulkarni}) that $$K^{*}(X_i,X_j)+K^{*}(X_k,X_r)=K^{*}(X_i,X_k)+K^{*}(X_j,X_r)$$ for $k \ne r \geq p-\ell+1$ and $1 \le i \ne j \le p-\ell$. Thus 
	\begin{equation}\label{on}
		\langle \beta(X_i,X_i)-\bar{\eta}, \beta(X_j,X_j)-\bar{\eta}\rangle=0.
	\end{equation}
Since $\beta(X_i,X_i)-\bar{\eta}=\rho^{-1}(\eta_i-\eta),$ then \eqref{on} becomes $$\langle \eta_i-\eta,  \eta_j-\eta\rangle=0.$$ Using Codazzi equation \eqref{codazzi3}, we see that  $[X_i,X_j]\in E_\eta^{\perp}$ for all $1 \le i ,j \le p-\ell$, which proves the last assertion. 
\end{proof}

\begin{proposition}\label{decomp}
    There exist an orthonormal subset $\{\xi_1, \ldots, \xi_{p-\ell}\} \in \Gamma(N_{f}M)$ and  $f_i \in C^{\infty}(M)$, $1\leq i\leq p-\ell$, such that $f_i$ does not vanish at any point, $1\leq i\leq p-\ell$,  $\sum_{i=1}^{p-\ell} f_i^{2}=1$ and \begin{equation}\label{beta-eta} \beta(X_i,X_i)-\bar{\eta}=f_i\xi_i.\end{equation}
\end{proposition}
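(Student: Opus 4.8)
The plan is to read off the orthonormal frame $\{\xi_i\}$ and the functions $f_i$ directly from the normal vectors $v_i:=\beta(X_i,X_i)-\bar\eta$, $1\le i\le p-\ell$, and then to fix the normalization $\sum_i f_i^2=1$ by a pair of trace identities. The only genuine input beyond bookkeeping will be the orthogonality supplied by Lemma \ref{lema-ortogonal}, together with the two universal constraints on the Moebius second fundamental form, namely $\tr\,\beta=0$ and $\|\beta\|_*^2=(n-1)/n$.

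First I would note, exactly as in the proof of Lemma \ref{lema-ortogonal}, that $v_i=\rho^{-1}(\eta_i-\eta)$. Since $f$ is proper, the simple principal normal $\eta_i$ differs from $\eta$ at every point, so each $v_i$ is nowhere vanishing; moreover $\{v_i\}_{i=1}^{p-\ell}$ is orthogonal by Lemma \ref{lema-ortogonal}. Hence the choices $\xi_i:=v_i/\|v_i\|$ and $f_i:=\|v_i\|>0$ at once produce an orthonormal subset $\{\xi_1,\dots,\xi_{p-\ell}\}\subset\Gamma(N_fM)$, nowhere-vanishing functions $f_i\in C^\infty(M)$, and the identity \eqref{beta-eta}. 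Everything then reduces to showing that $\sum_{i=1}^{p-\ell}f_i^2=\sum_{i=1}^{p-\ell}\|v_i\|^2=1$.

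The main point is to evaluate $\sum_i\|v_i\|^2$ in two different ways. Since $\beta$ is traceless and $\beta(X_a,X_a)=\bar\eta$ for $p-\ell+1\le a\le n$, summing \eqref{beta-eta} over $i$ gives $\sum_{i=1}^{p-\ell}v_i=-(n-p+\ell)\bar\eta-(p-\ell)\bar\eta=-n\bar\eta$, and because the $v_i$ are mutually orthogonal this yields $\sum_i\|v_i\|^2=n^2\|\bar\eta\|^2$. On the other hand, expanding $\|v_i\|^2=\|\beta(X_i,X_i)\|^2-2\langle\beta(X_i,X_i),\bar\eta\rangle+\|\bar\eta\|^2$ and using $\sum_{i=1}^{p-\ell}\beta(X_i,X_i)=-(n-p+\ell)\bar\eta$ together with $\sum_{i=1}^{p-\ell}\|\beta(X_i,X_i)\|^2=\|\beta\|_*^2-(n-p+\ell)\|\bar\eta\|^2=(n-1)/n-(n-p+\ell)\|\bar\eta\|^2$, one obtains $\sum_i\|v_i\|^2=(n-1)/n+n\|\bar\eta\|^2$. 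Equating the two expressions forces $\|\bar\eta\|^2=1/n^2$, whence $\sum_i f_i^2=n^2\|\bar\eta\|^2=1$. The subtlety worth flagging is precisely this last step: the value $\sum_i f_i^2=1$ (equivalently $\|\bar\eta\|=1/n$) is not a free normalization but is imposed by the interplay between the conformal-flatness orthogonality of Lemma \ref{lema-ortogonal} and the two normalizations of $\beta$; without the orthogonality the first computation fails and the sum is not determined.
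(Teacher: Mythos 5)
Your proposal is correct and follows essentially the same route as the paper: both deduce $\|\bar{\eta}\|=1/n$ (hence $\sum_i f_i^2=1$) from the traceless identity $\bar{\eta}=-\frac{1}{n}\sum_i(\beta(X_i,X_i)-\bar{\eta})$, the normalization $\|\beta\|_*^2=\frac{n-1}{n}$, and the orthogonality of Lemma \ref{lema-ortogonal}, with the nonvanishing of the $f_i$ coming from the simplicity of the principal normals $\eta_i\ne\eta$. The only difference is cosmetic bookkeeping: you compute $\sum_i\|v_i\|^2$ two ways and equate, while the paper takes the inner product of the trace identity with each $v_j$ and then sums over $j$.
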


\begin{proof}
    Since  $\beta$ is traceless, then \begin{equation}\label{traco} \bar{\eta}=-\frac{1}{n}\sum_{i=1}^{p-\ell}(\beta(X_i,X_i)-\bar{\eta}).\end{equation}
    Taking the inner product of both sides of the preceding equation with $\beta(X_j,X_j)-\bar{\eta}$, for each $1 \le j \le p-\ell$, and using  \eqref{on} we obtain $$||\beta(X_j,X_j)||^{2}+(n-2)\langle \beta(X_j,X_j), \bar{\eta}\rangle+(1-n)||\bar{\eta}||^{2}=0.$$ Thus \begin{equation*}\label{sum} \sum_{j=1}^{p-\ell}||\beta(X_j,X_j)||^{2}+(n-2)\langle \sum_{j=1}^{p-\ell}\beta(X_j,X_j), \bar{\eta}\rangle+(1-n)(p-\ell)||\bar{\eta}||^{2}=0.\end{equation*} Using that $||\beta||^{2}_{*}=\frac{n-1}{n}$ and that $\tr \beta=0$, the previous equation becomes $$\frac{n-1}{n}-(n-p+\ell)||\bar{\eta}||^{2}-(n-2)(n-p+\ell)||\bar{\eta}||^{2}+(1-n)(p-\ell)||\bar{\eta}||^{2}=0,$$ that is, \begin{equation}\label{eta}
		||\bar{\eta}||=\frac{1}{n}.
	\end{equation}
 Equations  \eqref{traco} and \eqref{eta} yield $\sum_{i=1}^{p-\ell}||\beta(X_i,X_i)-\bar{\eta}||^{2}=1$. Since each $\beta(X_i,X_i)$ is a simple Moebius principal normal vector field, the statements follow from   Lemma \ref{lema-ortogonal}.
\end{proof}

\begin{proposition}\label{prop:moebius-invariants}
  The following formulas hold:
    \begin{description}
\item[(i)] The Moebius second fundamental formula:
\begin{align}\label{betas}
		\beta(X_k,X_k)&=\bar{\eta}=-\frac{1}{n}\sum_{i=1}^{p-\ell}f_i \xi_i, \qquad p-\ell+1 \le k \le n, \nonumber \\
		\beta(X_i,X_i)&=\frac{n-1}{n}f_i\xi_i-\frac{1}{n}\sum_{\overset{j=1}{j \ne i}}^{p-\ell} f_j \xi_j, \quad 1 \le i \le p-\ell, \\
		\beta(X_k,X_r)&=0, \quad r \ne k \geq 1. \nonumber
	\end{align}
 \item[(ii)] The normal connection:
 \begin{align}
	\nabla^{\perp}_{X_k}\xi_i&=0, \quad p-\ell+1\le k \leq n\,\, \mbox{and} \,\, 1\le i \le p-\ell, \nonumber\\
	\nu_{ij}(X_i)&=\frac{f_i}{f_j}(\langle \grad^{*}\log f_j, X_i \rangle^{*}-\langle \delta, X_i\rangle^{*}), \quad 1 \le i \ne j \le p-\ell, \label{cn}\\
	\nu_{jk}(X_i)&=0, \quad 1 \le k \ne i \ne j \ne k \le p-\ell, \quad \mbox{where}\quad \nu_{ij}(X)=\langle \nabla^{\perp}_{X}\xi_i, \xi_j \rangle. \nonumber
\end{align}
\item[(iii)] The Moebius one-form:
\begin{align}
	\langle \omega(X_i), \xi_i \rangle&=-\frac{1}{n}(X_i(f_i)-f_i \sum_{\overset{r=1}{r \ne i}}^{p-\ell}X_i(\log f_r))+\frac{n-(p-\ell-1)}{n}f_i \langle \delta, X_i \rangle^{*}, \nonumber\\
	\langle \omega(X_i), \xi_j \rangle&=-\frac{1}{n}\left(\left(\frac{f_j^{2}+f_i^{2}}{f_j^{2}}\right)X_i(f_j)-\frac{f_i^{2}}{f_j}\langle \delta, X_i\rangle^{*}\right),\label{omegas}\\
	\langle\omega(X_k), \xi_i\rangle&=-\frac{1}{n}X_{k}(f_i), \quad 1 \le i \ne j \le p-\ell \quad \mbox{and} \quad p-\ell+1\le k \le n,  \nonumber
\end{align} 
    \end{description}where $\delta$ is the mean curvature vector field of $\Delta$ with respect to $\langle\,,\,\rangle^{*}$ and $f_1, \ldots f_{p-\ell} \in C^{\infty}(M)$, $\xi_1, \ldots, \xi_{p-\ell} \in \Gamma(N_fM)$ are given by Proposition \ref{decomp}.
\end{proposition}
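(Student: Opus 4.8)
The plan is to compute each of the three families of formulas directly from the defining equations of the Moebius invariants, the normal connection, and the conformal Codazzi equations, using the orthonormal frame $X_1,\ldots,X_n$ and the orthonormal normal set $\xi_1,\ldots,\xi_{p-\ell}$ produced by Proposition \ref{decomp}. Part \textbf{(i)} is essentially a bookkeeping exercise: the formula $\beta(X_k,X_k)=\bar\eta$ for $k\geq p-\ell+1$ holds by construction, the off-diagonal vanishing $\beta(X_k,X_r)=0$ is immediate since the frame diagonalizes $\beta$, and the expression for $\bar\eta$ follows by substituting \eqref{beta-eta} into \eqref{traco} and using $\|\bar\eta\|=1/n$ from \eqref{eta}. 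For $1\le i\le p-\ell$, I would write $\beta(X_i,X_i)=(\beta(X_i,X_i)-\bar\eta)+\bar\eta=f_i\xi_i-\tfrac1n\sum_j f_j\xi_j$ and separate the $j=i$ term to obtain the stated coefficient $\tfrac{n-1}{n}f_i$.

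For part \textbf{(ii)}, the first relation $\nabla^\perp_{X_k}\xi_i=0$ for $k\geq p-\ell+1$ should come from applying the conformal Codazzi equation \eqref{C1} with $X=X_k$ in the nullity distribution $\Delta=E_\eta$ together with the Dupin property of $\eta$ (guaranteed by Proposition \ref{dupin}(i), since $\dim\Delta=n-p+\ell\geq 2$), which forces $\nabla^\perp_{X_k}\bar\eta=0$ and, via the parallelism of $\bar\eta$ along $\Delta$ and the splitting \eqref{beta-eta}, the parallelism of each $\xi_i$ along $\Delta$. The connection coefficients $\nu_{ij}(X_i)$ and the vanishing $\nu_{jk}(X_i)=0$ for distinct indices should be extracted by expanding $({^f\nabla^\perp_{X_i}}\beta)(X_j,X_j)-({^f\nabla^\perp_{X_j}}\beta)(X_i,X_j)=\omega((X_i\wedge X_j)X_j)$ and pairing with $\xi_i$ and $\xi_j$; the derivatives of the $f_j$ and the mean curvature vector field $\delta$ of $\Delta$ enter through the covariant derivatives $\nabla^*_{X}Y$ of the frame, which produce the $\langle\delta,X_i\rangle^*$ terms when a tangent vector in $\Delta^\perp$ is differentiated into $\Delta$.

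For part \textbf{(iii)}, the plan is to apply the conformal Codazzi equation \eqref{C1} in its various index configurations and read off the normal components against $\xi_i$ and $\xi_j$. The cleanest case is $\langle\omega(X_k),\xi_i\rangle$ for $k\geq p-\ell+1$: differentiating $\beta(X_k,X_k)=-\tfrac1n\sum_r f_r\xi_r$ along $X_k$, using $\nabla^\perp_{X_k}\xi_r=0$ from part (ii), immediately yields $-\tfrac1n X_k(f_i)$. The diagonal term $\langle\omega(X_i),\xi_i\rangle$ and the mixed term $\langle\omega(X_i),\xi_j\rangle$ require expanding $({^f\nabla^\perp_{X_i}}\beta)(X_k,X_k)-({^f\nabla^\perp_{X_k}}\beta)(X_i,X_k)=\omega(X_i)$ (taking $X_k\in\Delta$), substituting the formulas from part (i), and then using the connection data from part (ii) to resolve $\nabla^\perp_{X_i}\xi_j$ in terms of the $\nu_{ij}$ already computed; the coefficient $\tfrac{n-(p-\ell-1)}{n}$ should emerge from collecting the $\langle\delta,X_i\rangle^*$ contributions across all $p-\ell$ summands.

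The main obstacle I anticipate is part \textbf{(ii)}, specifically the careful determination of $\nu_{ij}(X_i)$: this requires correctly tracking how the Levi-Civita connection $\nabla^*$ acts on the frame within $\Delta^\perp$ and how differentiation sends $\Delta^\perp$-directions into $\Delta$, which is where $\delta$ enters. Because these connection coefficients feed directly into part (iii), any sign or normalization error there propagates. I would therefore fix conventions early by writing out $\nabla^*_{X_i}X_j$ and $\nabla^*_{X_i}X_k$ in terms of $\delta$ and the $X_i(\log f_j)$ before touching the Codazzi equations, so that the subsequent substitutions in parts (ii) and (iii) are purely mechanical.
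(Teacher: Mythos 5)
There is a genuine gap in your part \textbf{(ii)}, in the derivation of $\nabla^{\perp}_{X_k}\xi_i=0$. The Dupin property of $\eta$ (Proposition \ref{dupin}) gives ${}^{f}\nabla^{\perp}_{X_k}\eta=0$ for the \emph{original} principal normal $\eta$, but it does not force $\nabla^{\perp}_{X_k}\bar{\eta}=0$: since $\bar{\eta}=\rho^{-1}(\eta-\mathcal{H}^{f})$, the derivative of $\bar\eta$ along $\Delta$ picks up derivatives of $\rho$ and of $\mathcal{H}^{f}$, which need not vanish. In fact the Codazzi equation inside $\Delta$ (valid because $\dim\Delta\geq 2$) yields $\omega(X_k)=\nabla^{\perp}_{X_k}\bar{\eta}$ (this is \eqref{omega1} in the paper), and by the very formulas you state in part \textbf{(iii)}, $\langle\omega(X_k),\xi_i\rangle=-\frac{1}{n}X_k(f_i)$; this vanishes precisely when $(\grad^{*}f_i)_{\Delta}=0$, i.e.\ when $\Delta^{\perp}$ is totally geodesic (Proposition \ref{prop-totally-geodesic}) --- a fact established only in the proof of Theorem \ref{theorem-main}, under the extra hypotheses of constant Moebius curvature and $2p\leq n$, and not available at this stage. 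So your claim $\nabla^{\perp}_{X_k}\bar{\eta}=0$ is false here and directly contradicts your own derivation of $\langle\omega(X_k),\xi_i\rangle=-\frac1n X_k(f_i)$. Moreover, even if $\bar\eta$ were parallel along $\Delta$, parallelism of the sum $-\frac1n\sum_i f_i\xi_i$ would not entail parallelism of each $\xi_i$. The correct route, which is the paper's, is to apply the Codazzi equation to the configuration $(X_k,X_i,X_i)$ with $X_k\in\Delta$, $X_i\in\Delta^{\perp}$, using $\omega(X_k)=\nabla^{\perp}_{X_k}\bar\eta$ to obtain $\nabla^{\perp}_{X_k}(f_i\xi_i)=\langle\nabla^{*}_{X_i}X_i,X_k\rangle^{*}f_i\xi_i$; the $\xi_i$-component gives $\langle\nabla^{*}_{X_i}X_i,X_k\rangle^{*}=X_k(\log f_i)$ and the component orthogonal to $\xi_i$ gives $\nabla^{\perp}_{X_k}\xi_i=0$, since $\nabla^{\perp}_{X_k}\xi_i\perp\xi_i$ and $f_i\neq 0$.

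A secondary weakness: in part \textbf{(ii)} you propose to extract $\nu_{ij}(X_i)$ and $\nu_{jk}(X_i)=0$ from a \emph{single} Codazzi configuration $({}^{f}\nabla^{\perp}_{X_i}\beta)(X_j,X_j)-({}^{f}\nabla^{\perp}_{X_j}\beta)(X_i,X_j)=\omega((X_i\wedge X_j)X_j)$, paired with $\xi_i,\xi_j$. But $\omega(X_i)$ is itself unknown at that point, so one equation yields only relations between $\omega$, the $\nu$'s and the connection coefficients, not the $\nu$'s themselves. The engine of the paper's proof is an \emph{elimination}: it computes $\omega(X_i)$ twice --- once from the Codazzi equation with the repeated index in $\Delta$ (giving the system \eqref{omegas1}, where $\delta$ enters via umbilicity of $\Delta$) and once with the repeated index in $\Delta^{\perp}$ (giving \eqref{omegas2}) --- and compares the two. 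This comparison is also what \emph{proves} the identity $\langle\nabla^{*}_{X_j}X_j,X_i\rangle^{*}=\langle\grad^{*}\log f_j,X_i\rangle^{*}$; your plan to ``fix conventions early by writing out $\nabla^{*}_{X_i}X_j$ in terms of $\delta$ and the $X_i(\log f_j)$ before touching the Codazzi equations'' presupposes this identity, which cannot be posited a priori. Your parts \textbf{(i)} and the $\Delta$-direction computation in \textbf{(iii)} do match the paper, but as written the argument for \textbf{(ii)} would not close, and the errors there propagate into \textbf{(iii)} exactly as you feared.
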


\begin{proof} Equations  \eqref{betas} are immediate consequences of \eqref{beta-eta} and \eqref{traco}. Substituting  \eqref{betas} in the conformal Codazzi equation \begin{align*}
	\omega(X_i)&=(\nabla^{\perp}_{X_i}\beta)(X_k,X_k)-(\nabla^{\perp}_{X_k}\beta)(X_i,X_k)\\
	&=\nabla_{X_i}^{\perp}\bar{\eta}+\langle \nabla^{*}_{X_k}X_k, X_i\rangle^{*}(\beta(X_i,X_i)-\bar{\eta}),
\end{align*} for $1 \le i \le p-\ell$ and $p-\ell+1 \le k \le n$,  we obtain $$ \omega(X_i)=-\frac{1}{n}\sum_{r=1}^{p-\ell}(X_i(f_r)\xi_r+f_r\nabla^{\perp}_{X_i}\xi_r)+\langle \delta, X_i\rangle^{*}f_i\xi_i,$$ hence \begin{equation}\label{omegas1}\begin{cases}
		\langle \omega(X_i), \xi_i\rangle=-\frac{1}{n}(X_i(f_i)+\sum_{\overset{r=1}{r \ne i}}^{p-\ell}f_r\nu_{ri}(X_i))+f_i\langle \delta, X_i\rangle^{*},\vspace{5pt}\\
		\langle \omega(X_i), \xi_j \rangle=-\frac{1}{n}(X_i(f_j)+\sum_{\overset{r=1}{r \ne j}}^{p-\ell}f_r \nu_{rj}(X_i)), \quad i \ne j. \end{cases}
\end{equation} 
On the other hand, substituting \eqref{betas} in the conformal Codazzi equation \begin{align*}
	\omega(X_i)&=(\nabla^{\perp}_{X_i}\beta)(X_j,X_j)-(\nabla^{\perp}_{X_j}\beta)(X_i,X_j)\\
	&=\nabla^{\perp}_{X_i}\beta(X_j,X_j)+\langle \nabla^{*}_{X_j}X_j,X_i\rangle^{*}(\beta(X_i,X_i)-\beta(X_j,X_j))
\end{align*} for $1 \le i \ne j \le p-\ell$, yields $$ \omega(X_i)=-\frac{1}{n}(\sum_{\overset{r=1}{r \ne j}}^{p-\ell}X_i(f_r)\xi_r+\sum_{r=1}^{p-\ell}f_r \nabla^{\perp}_{X_i}\xi_r)+\frac{n-1}{n}X_i(f_j)\xi_j+f_j\nabla^{\perp}_{X_i}\xi_j+\langle \nabla^{*}_{X_j}X_j, X_i \rangle^{*}(f_i\xi_i-f_j\xi_j),$$  that is, \begin{equation}\label{omegas2}
	\begin{cases}
		\langle \omega(X_i), \xi_i\rangle=-\frac{1}{n}(X_i(f_i)-nf_j\nu_{ji}(X_i)+\sum_{\overset{r=1}{r \ne i}}^{p-\ell}f_r \nu_{ri}(X_i))+f_i\langle \nabla^{*}_{X_j}X_j,X_i\rangle^{*},\\
		\langle \omega(X_i), \xi_j\rangle=\frac{1}{n}((n-1)X_i(f_j)-\sum_{\overset{r=1}{r \ne j}}^{p-\ell}f_r \nu_{rj}(X_i))-f_j\langle \nabla^{*}_{X_j}X_j,X_i\rangle^{*},\\
		\langle \omega(X_i), \xi_k \rangle=-\frac{1}{n}(X_i(f_k)+\sum_{\overset{r=1}{r \ne k}}^{p-\ell}f_r \nu_{rk}(X_i))+f_j\nu_{jk}(X_i),
	\end{cases}
\end{equation} for all $1 \le k \ne i \ne j \ne k \le p-\ell$. 
Comparing the expressions in \eqref{omegas1} and  \eqref{omegas2}, we obtain \begin{align}
	\langle \nabla^{*}_{X_j}X_j,X_i\rangle^{*}=&\langle \grad^{*}\log f_j, X_i \rangle^{*},\label{value-nabla}\\
	\nu_{ij}(X_i)=&\frac{f_i}{f_j}(\langle \nabla^{*}_{X_j}X_j, X_i\rangle^{*}-\langle \delta, X_i\rangle^{*}),\label{nuij}\\
	\nu_{jk}(X_i)=&0, \,\,\, 1 \le k \ne i \ne j \ne k \le p-\ell.\label{nujk}
\end{align} 
The conformal Codazzi equation $$ (\nabla^{\perp}_{X_k}\beta)(X_r,X_k)-(\nabla^{\perp}_{X_r}\beta)(X_k,X_k)=\omega((X_k \wedge X_r)X_k),$$ $ p-\ell+1 \le k \ne r \le n$, implies \begin{equation}\label{omega1} \omega(X_r)= {\nabla^{\perp}_{X_r}\bar{\eta}},\end{equation}
whereas $$({\nabla^{\perp}_{X_k}}\beta)(X_i,X_i)-(\nabla^{\perp}_{X_i}\beta)(X_k,X_i)=\omega((X_k \wedge X_i)X_i)= {{\nabla^{\perp}_{X_k}}}\bar{\eta},$$ for $p-\ell+1 \le k \le n$ and $1 \le i \le p-\ell$, yields $$ \nabla^{\perp}_{X_k}(\beta(X_i,X_i)-\bar{\eta})=\langle \nabla^{*}_{X_i}X_i, X_k \rangle^{*}(\beta(X_i,X_i)-\bar{\eta}) \Longleftrightarrow \nabla^{\perp}_{X_k}(f_i \xi_i)=\langle \nabla^{*}_{X_i}X_i,X_k\rangle^{*}f_i\xi_i.$$ Therefore \begin{equation}\label{nablas}
    \langle \nabla^{*}_{X_i}X_i,X_k\rangle^{*}=\langle \grad^{*}\log f_i, X_k\rangle^{*} \quad \mbox{and} \quad \nabla^{\perp}_{X_k}\xi_i=0, 
\end{equation} for all  $p-\ell+1 \le k \le n$ and $1 \le i \le p-\ell.$ It follows from 
\eqref{nuij}, \eqref{nujk} and \eqref{nablas} that \eqref{cn} holds, and substituting it in \eqref{omegas1} and \eqref{omega1} gives \eqref{omegas}.
\end{proof}

\begin{proposition}\label{prop-totally-geodesic}
    The  distribution $\Delta^{\perp}$ is totally geodesic with respect to $\langle \,,\,\rangle^{*}$ if and only if $(\grad^{*}f_i)_{\Delta}=0$ for all $i=1, \ldots, p -\ell.$
\end{proposition}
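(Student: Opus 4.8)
The plan is to translate total geodesy of $\Delta^{\perp}$ into conditions on the mixed connection coefficients of the diagonalizing frame $X_1,\dots,X_n$. Since $\Delta^{\perp}=\spa\{X_1,\dots,X_{p-\ell}\}$ is spanned by frame fields, it is totally geodesic with respect to $\langle\,,\,\rangle^{*}$ if and only if $\langle\nabla^{*}_{X_i}X_j,X_k\rangle^{*}=0$ for all $1\le i,j\le p-\ell$ and all $p-\ell+1\le k\le n$. I would separate these coefficients into the diagonal ones ($i=j$) and the off-diagonal ones ($i\ne j$) and analyze each family on its own.

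For the diagonal coefficients I would use the relation $\langle\nabla^{*}_{X_i}X_i,X_k\rangle^{*}=\langle\grad^{*}\log f_i,X_k\rangle^{*}$ established in \eqref{nablas} in the course of proving Proposition \ref{prop:moebius-invariants}. As $\Delta=\spa\{X_{p-\ell+1},\dots,X_n\}$, this reads $\langle\nabla^{*}_{X_i}X_i,X_k\rangle^{*}=f_i^{-1}X_k(f_i)$, so the diagonal coefficients vanish for all admissible $k$ if and only if $(\grad^{*}f_i)_{\Delta}=0$. Taking $i=j$ in the total-geodesy criterion immediately gives the forward implication, and reduces the converse to proving that the off-diagonal coefficients vanish with no further hypothesis.

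The heart of the argument is thus to show $\langle\nabla^{*}_{X_i}X_j,X_k\rangle^{*}=0$ for $1\le i\ne j\le p-\ell$ and $k\ge p-\ell+1$, using only the algebraic structure of $\beta$. I would evaluate the conformal Codazzi equation \eqref{C1} on the triple $(X_i,X_j,X_k)$; since $X_k$ is orthogonal to both $X_i$ and $X_j$, the right-hand side $\omega((X_i\wedge X_j)X_k)$ vanishes. Expanding $({^{f}\nabla^{\perp}_{X_i}}\beta)(X_j,X_k)$, using that $\beta$ is diagonal in the frame (so that $\beta(\,\cdot\,,X_k)=\langle\,\cdot\,,X_k\rangle^{*}\bar{\eta}$ and $\beta(X_j,\,\cdot\,)=\langle\,\cdot\,,X_j\rangle^{*}\beta(X_j,X_j)$) together with the compatibility identity $\langle\nabla^{*}_{X_i}X_k,X_j\rangle^{*}=-\langle\nabla^{*}_{X_i}X_j,X_k\rangle^{*}$, I expect the expression to collapse, via \eqref{beta-eta}, to $\langle\nabla^{*}_{X_i}X_j,X_k\rangle^{*}(\beta(X_j,X_j)-\bar{\eta})=\langle\nabla^{*}_{X_i}X_j,X_k\rangle^{*}f_j\xi_j$. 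The Codazzi equation then becomes $\langle\nabla^{*}_{X_i}X_j,X_k\rangle^{*}f_j\xi_j=\langle\nabla^{*}_{X_j}X_i,X_k\rangle^{*}f_i\xi_i$.

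The main obstacle is to extract the desired vanishing from this last identity, and this is where the frame furnished by Proposition \ref{decomp} pays off: the two sides point along the \emph{orthogonal} normal directions $\xi_j$ and $\xi_i$, while $f_i$ and $f_j$ are nowhere zero, so both coefficients must vanish. Hence every off-diagonal mixed coefficient is identically zero. Combining this with the diagonal analysis shows that $\Delta^{\perp}$ is totally geodesic with respect to $\langle\,,\,\rangle^{*}$ if and only if $(\grad^{*}f_i)_{\Delta}=0$ for every $i=1,\dots,p-\ell$, as claimed.
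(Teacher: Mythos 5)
Your proposal is correct and follows essentially the same route as the paper: you apply the conformal Codazzi equation \eqref{C1} to triples $(X_i,X_j,X_k)$ with $1\le i\ne j\le p-\ell$ and $k\ge p-\ell+1$ to obtain $\langle\nabla^{*}_{X_i}X_j,X_k\rangle^{*}f_j\xi_j=\langle\nabla^{*}_{X_j}X_i,X_k\rangle^{*}f_i\xi_i$, conclude the vanishing of the off-diagonal mixed coefficients from the orthogonality of $\xi_i,\xi_j$ and the nonvanishing of $f_i,f_j$, and handle the diagonal coefficients via \eqref{nablas}, exactly as the paper does. The only difference is that you spell out the expansion of the Codazzi equation which the paper leaves implicit.
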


\begin{proof} The conformal Codazzi equation $ (\nabla^{\perp}_{X_i}\beta)(X_j,X_k)=(\nabla^{\perp}_{X_j}\beta)(X_i,X_k)$, for $1 \le i \ne j \le p-\ell$ and $p-\ell+1 \le k \le n$, implies that  \begin{align*}
   \langle \nabla^{*}_{X_i}X_j,X_k\rangle^{*}f_{j}\xi_j=\langle \nabla^{*}_{X_j}X_i,X_k\rangle^{*}f_i\xi_i.
\end{align*} Given that $\xi_i,\xi_j$ are linearly independent  and that $f_i,f_j$ do not vanish at any point,  then \begin{equation}\label{flat}
    (\nabla_{X_i}^{*}X_j)_{\Delta}=0=(\nabla^{*}_{X_j}X_i)_{\Delta}.
\end{equation} 
The statement then follows from  \eqref{nablas} and \eqref{flat}. 
\end{proof}

\begin{corollary}\label{tp-net}
The net $\mathcal{E}=\{\spa\{X_1\}, \ldots, \spa\{X_{p-\ell}\}, \Delta\}$ in $(M^{n}, \langle\,,\,\rangle^{*})$ is a  $\TP$-net. 
\end{corollary}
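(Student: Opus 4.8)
The plan is to check directly the two defining properties of a $\TP$-net for $\mathcal{E}=\{\spa\{X_1\},\ldots,\spa\{X_{p-\ell}\},\Delta\}$, namely that every distribution in the net is umbilical and that every orthogonal complement is integrable. Orthogonality of the net is immediate from the fact that $X_1,\ldots,X_n$ is orthonormal with respect to $\langle\,,\,\rangle^{*}$ and $\Delta=\spa\{X_k:p-\ell+1\le k\le n\}$. Each rank-one distribution $\spa\{X_i\}$, $1\le i\le p-\ell$, is automatically umbilical, with mean curvature vector $(\nabla^{*}_{X_i}X_i)_{\spa\{X_i\}^{\perp}}$, while $\Delta=E_\eta$ is umbilical with respect to $\langle\,,\,\rangle^{*}$, as recorded before Proposition \ref{totally-umbilical}, and $\Delta^{\perp}$ is integrable by Lemma \ref{lema-ortogonal}. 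Thus the only assertion that requires an argument is the integrability of $\spa\{X_i\}^{\perp}$ for each $1\le i\le p-\ell$.

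Fixing such an $i$ and writing $\spa\{X_i\}^{\perp}=\spa\{X_a:a\ne i\}$, I would reduce the problem, by tensoriality, to showing that $\langle[X_a,X_b],X_i\rangle^{*}=0$ for all frame fields $X_a,X_b$ with $a,b\ne i$, and split this into three cases: (a) both $a,b\le p-\ell$; (b) one of $a,b$ at most $p-\ell$ and the other at least $p-\ell+1$; (c) both $a,b\ge p-\ell+1$. Case (c) is settled immediately by the umbilicity of $\Delta$, since then $\langle\nabla^{*}_{X_a}X_b,X_i\rangle^{*}=\langle X_a,X_b\rangle^{*}\langle\delta,X_i\rangle^{*}$ is symmetric in $a,b$, so the bracket has no $X_i$-component.

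For cases (a) and (b) the plan is to exploit the off-diagonal conformal Codazzi equations $(\nabla^{\perp}_{X_a}\beta)(X_b,X_c)=(\nabla^{\perp}_{X_b}\beta)(X_a,X_c)$, whose right-hand side Moebius-form term $\omega((X_a\wedge X_b)X_c)$ vanishes as soon as $c\notin\{a,b\}$. Substituting \eqref{betas}, using that $\beta$ is diagonalized by the frame and that $(\nabla^{*}_{X_a}X_b)_{\Delta}=0$ for distinct $a,b\le p-\ell$ by \eqref{flat}, and recording that $\beta(X_a,X_a)-\beta(X_b,X_b)=f_a\xi_a-f_b\xi_b$ for $a\ne b\le p-\ell$, each such identity collapses to a relation in $N_fM$ of the form $t\,(f_a\xi_a-f_b\xi_b)=0$ for a single connection coefficient $t$, or to a comparison of components along the $\xi$'s. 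Since $\{\xi_1,\ldots,\xi_{p-\ell}\}$ is orthonormal and the functions $f_a$ vanish nowhere, I expect to read off that $\langle\nabla^{*}_{X_j}X_l,X_i\rangle^{*}=0$ for pairwise distinct $i,j,l\le p-\ell$ and that $\langle\nabla^{*}_{X_k}X_i,X_j\rangle^{*}=0$ for $k\ge p-\ell+1$ and $i\ne j\le p-\ell$. Combined with \eqref{flat}, these vanishings give $\langle[X_a,X_b],X_i\rangle^{*}=0$ in cases (a) and (b), completing the proof that $\spa\{X_i\}^{\perp}$ is integrable and hence that $\mathcal{E}$ is a $\TP$-net.

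The step I expect to be most delicate is extracting these scalar vanishings from the three-index Codazzi equations: one has to separate the components of the resulting normal-bundle identities along the distinct unit fields $\xi_i,\xi_j,\xi_l$ (and the remaining $\xi_m$), and it is exactly the asymmetry between the coefficients $\frac{n-1}{n}$ and $-\frac{1}{n}$ in \eqref{betas}, together with the linear independence of the $\xi$'s, that forces the offending connection coefficients to vanish outright rather than merely satisfy a homogeneous linear relation.
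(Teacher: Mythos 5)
Your proposal is correct and follows essentially the same route as the paper's proof: automatic umbilicity of the rank-one distributions and of $\Delta$, integrability of $\Delta^{\perp}$ from Lemma \ref{lema-ortogonal}, and integrability of each $\spa\{X_i\}^{\perp}$ extracted exactly as in the paper from the three-index conformal Codazzi equations (with vanishing $\omega$-term), equation \eqref{flat}, and the linear independence of the $\xi_j$'s together with the nonvanishing of the $f_j$'s. The only cosmetic difference is that you invoke automatic umbilicity of line distributions where the paper instead computes the mean curvature vector $-(\grad^{*}\log f_i^{-1})_{(\spa\{X_i\})^{\perp}}$ explicitly, a formula it reuses later in the proof of Theorem \ref{theorem-main}.
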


\begin{proof}
    By \eqref{value-nabla} and \eqref{nablas} we have \begin{equation*}\label{values-nabla}
    \nabla^{*}_{X_i}X_i=\grad^{*}\log f_i-\langle \grad^{*}\log f_i, X_i\rangle^{*}X_i, \quad 1 \le i \le p-\ell,
\end{equation*} hence
 \begin{equation*}
    \langle \nabla^{*}_{X_i}X_i, X_k \rangle^{*}=\langle \grad^{*}\log f_i, X_k \rangle^{*}=-\langle \grad^{*}\log f_i^{-1}, X_k\rangle^{*},
\end{equation*} for all $ 1\le k \le n$ with $k \ne i.$ Therefore, $\spa\{X_i\}$ is an umbilical distribution in $(M^{n}, \langle \,,\,\rangle^{*})$ with mean curvature vector field  $-(\grad^{*}\log f_i^{-1})_{(\spa\{X_i\})^{\perp}}$ for all $1 \le i \le p-\ell$. That $\Delta^{\perp}$ is integrable has been shown in Lemma \ref{lema-ortogonal}.

We claim $(\spa\{X_i\})^{\perp}$ is integrable for any $1 \le i \le p-\ell$.  
Indeed, as we know that $\Delta$ is umbilical, then $[X_j,X_k] \in \Delta \subset (\spa\{X_i\})^{\perp}$ for $p-\ell+1 \le j, k \le n$. On the other hand, for all $1 \le j \le p-\ell$ and $p-\ell+1 \le k \le n$, with $j \ne i$, it follows  from \eqref{flat} and the conformal Codazzi equation $(\nabla^{\perp}_{X_i}\beta)(X_k,X_j)=(\nabla^{\perp}_{X_k}\beta)(X_i,X_j)$ that
$$ \langle \nabla^{*}_{X_k}X_j,X_i \rangle^{*}(\beta(X_i,X_i)-\beta(X_j,X_j))=0, $$ 
hence $[X_j,X_k] \in (\spa\{X_i\})^{\perp}$.

Finally, assume $p-\ell \geq 3$ and let $1 \le k \ne j \ne i\ne k\le p-\ell$. From the conformal Codazzi equation $$(\nabla^{\perp}_{X_k}\beta)(X_j,X _i)=(\nabla^{\perp}_{X_j}\beta)(X_k,X_i),$$ we obtain \begin{equation*}
    \langle \nabla^{*}_{X_k}X_i, X_j \rangle^{*}(\beta(X_j,X_j)-\beta(X_i,X_i))=\langle \nabla^{*}_{X_j}X_i, X_k\rangle^{*} (\beta(X_k,X_k)-\beta(X_i,X_i)),
\end{equation*} which is equivalent to \begin{equation*}
    \langle \nabla^{*}_{X_k}X_i,X_j\rangle^{*}(f_j \xi_j-f_i\xi_i)=\langle \nabla^{*}_{X_j}X_i, X_k \rangle^{*}(f_k \xi_k-f_i \xi_i).
\end{equation*} This easily  implies that  $ \langle \nabla^{*}_{X_j}X_k, X_i \rangle^{*}=0=\langle \nabla^{*}_{X_k}X_j, X_i \rangle^{*},$ thus proving our claim.
\end{proof}

\section{The main result}\label{class}
   
    We are now in a position to state and prove the main result of this article.
    
\begin{theorem}\label{theorem-main} Let $f:M^{n}\to \mathbb{R}^{n+p}$,  $n \geq 5$ and  $2p \leq n$, be  a proper isometric immersion with flat normal bundle and constant Moebius curvature $c$ with at least three distinct principal normal vector fields. Then $f(M^{n})$ is the image by a conformal transformation of $\mathbb{R}^{n+p}$ of an open subset of a submanifold as in one of Examples \ref{cilindro}, which is determined by a submanifold $g:U \subset \mathbb{Q}^{p-\ell}_{\tilde{c}}\to \mathbb{Q}^{2p-\ell}_{\tilde{c}}$ with the property that the norm of its mean curvature vector field is given as in Corollary \ref{meancurvature}, with $0 \le \ell \le p-2$ and $\tilde{c}=0,1$ or $-1,$ respectively.
\end{theorem}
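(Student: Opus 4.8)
The plan is to reduce the problem, via the structure already assembled in Section \ref{moebius-inva-closed}, to a single application of Theorem \ref{cor:dupinpcn}, and then to identify the resulting base immersion through Lemma \ref{moebius-constante} and Corollary \ref{meancurvature}. First I would record the relevant data: since $f$ has constant Moebius curvature and flat normal bundle, its Moebius form is closed by Lemma \ref{moebius-closed}. The hypotheses $n\ge 5$ and $2p\le n$ force $p\le n-3$, so Theorem \ref{theorem-moore} yields a principal normal $\eta$ of multiplicity $n-p+\ell$ while Theorem \ref{theorem-DOV} makes every other principal normal simple; the assumption of at least three distinct principal normals is precisely $p-\ell\ge 2$, i.e. $0\le\ell\le p-2$. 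We are then in the setting of Section \ref{moebius-inva-closed}, with the orthonormal frame $X_1,\dots,X_n$ diagonalizing $\beta$ and $\psi$, the functions $f_1,\dots,f_{p-\ell}$ and orthonormal normal fields $\xi_1,\dots,\xi_{p-\ell}$ of Proposition \ref{decomp}, and the formulas of Proposition \ref{prop:moebius-invariants} together with the $\TP$-net of Corollary \ref{tp-net} at our disposal.

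The decisive step is to show that $\Delta^{\perp}=E_\eta^{\perp}$ is totally geodesic for the Moebius metric, which by Proposition \ref{prop-totally-geodesic} is equivalent to $(\grad^{*}f_i)_{\Delta}=0$, that is, $X_k(f_i)=0$ for all $p-\ell+1\le k\le n$ and $1\le i\le p-\ell$. Here I would first use constant Moebius curvature to pin down the Blaschke tensor: from $\Ric^{*}=c(n-1)\langle\,,\,\rangle^{*}$, $s^{*}=c$, Proposition \ref{blaschke2}, the identity $III_{\beta}(X_a,X_a)=\|\beta(X_a,X_a)\|^{2}$, and the normalizations $\tr\,\beta=0$, $\|\beta\|_{*}^{2}=\tfrac{n-1}{n}$, one finds that $\psi$ is diagonalized by the same frame with eigenvalues $\psi(X_k,X_k)=C_0$ on $\Delta$ and $\psi(X_i,X_i)=C_0+\tfrac1n f_i^{2}$ on $\Delta^{\perp}$, where $C_0$ depends only on $c$ and $n$. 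Substituting this, the normal connection \eqref{cn} and the Moebius form \eqref{omegas} into the conformal Codazzi equation \eqref{C2} evaluated on the triple $(X_i,X_k,X_i)$, the right-hand side collapses, using $\sum_j f_j X_k(f_j)=0$, to a multiple of $f_i X_k(f_i)$, while the left-hand side reduces, through the connection coefficients recorded in the proof of Corollary \ref{tp-net}, to $X_k(\psi(X_i,X_i))=\tfrac{2}{n}f_i X_k(f_i)$ together with controlled terms; matching the two sides forces $X_k(f_i)=0$. I expect this to be the main obstacle, since it is where the careful bookkeeping of the structure equations must be combined with the global normalizations to secure a nonvanishing net coefficient.

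With $\Delta^{\perp}$ totally geodesic for $\langle\,,\,\rangle^{*}$, Proposition \ref{totally-umbilical} makes it spherical, hence umbilical, for the metric induced by $f$. The normal $\eta$ is Dupin by Proposition \ref{dupin}(i), as its multiplicity $n-p+\ell\ge 3$; moreover $p-\ell\ge 2$ gives this multiplicity at most $n-2$, so the extra circle condition in Theorem \ref{cor:dupinpcn} is vacuous. Applying Theorem \ref{cor:dupinpcn} with $k=n-p+\ell$ and $m=n+p$ (whence $n-k=p-\ell$ and $m-k=2p-\ell$), we conclude that $f(M^{n})$ is, up to a conformal transformation of $\mathbb{R}^{n+p}$, an open subset of a cylinder, a generalized cone, or a rotational submanifold over an immersion $g\colon M^{p-\ell}\to\mathbb{Q}^{2p-\ell}_{\tilde c}$, with $\tilde c=0,1,-1$ respectively, exactly the three families of Examples \ref{cilindro} (the rotational case arising from the Euclidean description in Theorem \ref{cor:dupinpcn}(iii) via the conformal diffeomorphism $\Theta$). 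That $g$ has flat normal bundle is inherited from $f$, and since the $p-\ell$ distinct simple principal normals of $f$ along $\Delta^{\perp}$ correspond to $p-\ell=\dim M^{p-\ell}$ nowhere-vanishing pairwise orthogonal principal normals of $g$, the index of relative nullity of $g$ vanishes.

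It remains to read off the base. For an immersion of the form in Examples \ref{cilindro}, the computations there give the Moebius metric as $(p-\ell)^{2}\|\mathcal{H}^{g}\|^{2}(ds^{2}+\sigma_{-\tilde c})$; requiring it to have constant curvature $c$ is exactly Lemma \ref{moebius-constante}, which forces $ds^{2}$ to have constant sectional curvature $\tilde c$ and $\|\mathcal{H}^{g}\|$ to satisfy \eqref{condicoes}. Solving \eqref{condicoes} on the space form $\mathbb{Q}^{p-\ell}_{\tilde c}$ is Corollary \ref{meancurvature}, which yields the stated expression for the norm of the mean curvature vector field of $g$ and completes the proof.
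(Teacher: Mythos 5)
Your scaffolding (Lemma \ref{moebius-closed}, Theorems \ref{theorem-moore} and \ref{theorem-DOV}, the frame and formulas of Section \ref{moebius-inva-closed}, then Theorem \ref{cor:dupinpcn} followed by Lemma \ref{moebius-constante} and Corollary \ref{meancurvature}) matches the paper, but your decisive step has a genuine gap: the conformal Codazzi equation \eqref{C2} evaluated on the triple $(X_i,X_k,X_i)$ is an \emph{identity} and yields no constraint on $X_k(f_i)$. Your Blaschke eigenvalues are correct, $\psi(X_k,X_k)=C_0$ and $\psi(X_i,X_i)=C_0+\tfrac1n f_i^{2}$; but then, using \eqref{nablas}, the left-hand side of \eqref{C2} is
\begin{align*}
(\nabla^{*}_{X_i}\psi)(X_k,X_i)-(\nabla^{*}_{X_k}\psi)(X_i,X_i)
&=\bigl(\psi(X_i,X_i)-C_0\bigr)X_k(\log f_i)-X_k\bigl(\psi(X_i,X_i)\bigr)\\
&=\tfrac{1}{n}f_i X_k(f_i)-\tfrac{2}{n}f_i X_k(f_i)=-\tfrac{1}{n}f_i X_k(f_i),
\end{align*}
while the right-hand side, using $\beta(X_k,X_i)=0$, $\langle\omega(X_k),\xi_r\rangle=-\tfrac1n X_k(f_r)$ from \eqref{omegas}, and $\sum_{r}f_rX_k(f_r)=0$, is
\begin{align*}
\langle\omega(X_k),\beta(X_i,X_i)\rangle
&=-\tfrac{1}{n}\Bigl(\tfrac{n-1}{n}f_iX_k(f_i)-\tfrac{1}{n}\sum_{r\ne i}f_rX_k(f_r)\Bigr)
=-\tfrac{1}{n}f_iX_k(f_i).
\end{align*}
The two sides coincide identically, so ``matching'' them forces nothing; total geodesy of $\Delta^{\perp}$ is simply not a consequence of the pointwise structure equations you assembled. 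A structural warning sign is that your argument never uses the hypothesis $2p\le n$, whereas the paper needs it precisely at this step (and leaves open, in its final remark, cases where this count fails).

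What the paper actually does to close this gap is global rather than algebraic: it invokes the $\TP$-net of Corollary \ref{tp-net} and the decomposition Theorem \ref{decomposition-theorem} to produce adapted coordinates in which $\langle\,,\,\rangle^{*}$ is the orthogonal metric \eqref{metric-moebius}, with the twisting functions identified as $f_i^{-1}$ up to reparametrization; imposing constant curvature via the classical system for orthogonal metrics (the equations $\partial h_{ik}/\partial x_j=h_{ij}h_{jk}$, etc.) yields $\sum_{k}\frac{\partial f_k}{\partial x_i}\frac{\partial f_k}{\partial x_j}=0$ for $i\ne j\ge p-\ell+1$, so that $W=(f_1,\ldots,f_{p-\ell})$ together with the gradients $U_{p-\ell+1},\ldots,U_n$ are $n-p+\ell+1$ mutually orthogonal vectors in $\mathbb{R}^{p-\ell}$. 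Since $2p\le n$ gives $n-p+\ell+1>p-\ell$, one may assume $U_j=0$ for $j\ge n-p+\ell+1$, and a warped-product splitting $\langle\,,\,\rangle^{*}=g_1+v_{p-\ell+1}^{2}g_2$ combined with the Hessian condition $\Hess\,v_{p-\ell+1}+cv_{p-\ell+1}g_1=0$ produces a contradiction if any remaining $U_j$ is nonzero; only then is $\Delta^{\perp}$ totally geodesic and Proposition \ref{totally-umbilical} applicable. Your endgame after that point agrees with the paper's, except that the properness of $g$ and the vanishing of its relative nullity are justified there via the conformal invariance of properness (Proposition 5 of \cite{DOV}) rather than by your looser correspondence of principal normals.
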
 
\begin{proof}  First notice that  $(M^{n}, \langle \,,\,\rangle_{f})$  is conformally flat, for  $(M^{n}, \langle \,,\,\rangle^{*})$ has constant curvature and $\langle \,,\,\rangle_{f}$ and 
    $\langle \,,\,\rangle^{*}$ are conformal. Since the assumptions on $n$ and $p$ imply that $p\leq n-3$, by Theorem \ref{theorem-moore} there exists a principal normal vector field $\eta$ with multiplicity $n-p+\ell$ for some $0 \le \ell \le p-2$. Moreover,    the remaining $p-\ell$ principal normal vector fields of $f$ are all simple by Theorem \ref{theorem-DOV}. Since  the Moebius form of an isometric immersion with constant Moebius curvature and flat normal bundle is closed by Lemma \eqref{moebius-closed}, the Moebius invariants of $f$ are given as in Proposition \eqref{prop:moebius-invariants}.
Let $X_1, \ldots X_n$ be an orthonormal frame with respect to the metric $\langle\,,\,\rangle^{*}$  that diagonalizes $\beta$ and $\psi$ simultaneously, and such that
 \begin{equation}\label{iieta}\beta(X_i,X_i)=\bar{\eta},\end{equation} for all $ p-\ell+1 \le i  \le n$, where $\bar{\eta}$ is the Moebius principal normal vector field associated with $\eta$.  Consider the smooth distributions \begin{equation}
    \Delta=\spa\{X_i: \, p-\ell+1 \le i \le n\} \quad \mbox{and} \quad \Delta^{\perp}=\spa\{X_i: \, 1 \le i \le p-\ell\}. 
\end{equation} Corollary \ref{tp-net} shows that $\mathcal{E}$ is a $\TP$-net. By Theorem \ref{decomposition-theorem},  at each $x \in M^{n}$ there exists an open subset $U$ of $M^{n}$ containing $x$ and a product representation $\Phi: \prod_{k=1}^{p-\ell+1}M_k \to U$ of $\mathcal{E}$ that is an isometry with respect to a twisted product metric \begin{equation}\label{metric-twisted}\langle\,,\,\rangle=\sum_{k=1}^{p-\ell+1}\rho_k^{2}\pi_k^{*}\langle\,,\,\rangle_k \end{equation} on $\prod_{k=1}^{p-\ell+1}M_k$, for some \emph{twisting functions}  $\rho_k \in C^{\infty}(\prod_{k=1}^{p-\ell+1}M_k)$, $1\leq k\leq p-\ell +1$ .

Let $(E_{i})_{i=1, \ldots,p-\ell+1}$ be the product net of $\prod_{k=1}^{p-\ell+1}M_k$, that is,  $E_{i}(x):={\tau_i^{x}}_{*}T_{x_i}M_i$ for all  $x=(x_1, \ldots, x_{p-\ell+1})\in \prod^{p-\ell+1}_{k=1}M_k$, and let $\tau_i^{x}:M_i \to \prod_{k=1}^{p-\ell+1}M_k$ be the standard inclusion. As observed in \eqref{umbilical}, $E_i$ is an umbilical distribution with mean curvature vector field $-(\grad (\log \circ \rho_i))_{E_i^{\perp}}$, where  $\grad$ is the gradient with respect to $\langle\,,\,\rangle.$

We claim that there exist a coordinate system $(\Tilde{x}_1, \ldots, \Tilde{x}_n)$ on $\prod_{k=1}^{p-\ell+1}M_k$ and functions $r_i=r_i(\Tilde{x}_i)$, $ 1 \le i \le p-\ell$, such that $\rho_i=r_i(\Tilde{x}_i)f_i^{-1}\circ \Phi$. Indeed, as shown in the proof of Corollary \ref{tp-net}, $\spa \{X_i\}$ is an umbilical distribution on $(M^{n}, \langle \,,\,\rangle^{*})$ with mean curvature vector field $-(\grad^{*}\log f_i^{-1})_{(\spa\{X_i\})^{\perp}}$ for any $1 \le i \le p-\ell$. Hence  \begin{align*}
    -\langle \grad^{*}\log  f_i^{-1}, X_k \rangle^{*}=\langle \nabla^{*}_{X_i}X_i, X_k \rangle^{*}&=\langle \nabla^{*}_{{\tau_i^{x}}_{*x_i}\bar{X}_i}\Phi_{*}{\tau_i^{x}}_{*}\bar{X}_i, \Phi_{*}{\tau_{k}^{x}}_{* x_k}\bar{X}_k\rangle^{*}\\
    &=\langle \Phi_{*}\nabla_{{\tau_{i}^{x}}_{*x_i}\bar{X}_i}{\tau_{i}^{x}}_{*}\bar{X}_i, \Phi_{*}{\tau_{k}^{x}}_{*x_k}\bar{X}_k \rangle^{*}\\
    &=\langle \nabla_{{\tau_{i}^{x}}_{*x_i}\bar{X}_i}{\tau_{i}^{x}}_{*}\bar{X}_i, {\tau_{k}^{x}}_{*x_k}\bar{X}_k \rangle\\
    &=-\langle \grad (\log \circ \rho_i), {\tau_{k}^{x}}_{*x_k}\bar{X}_k\rangle,
\end{align*} for all $k \ne i$ with $1 \le k \le n.$ Thus, \begin{align}\label{igual}
    (\Phi_{*}\grad(\log \circ \rho_i))_{(\Phi_{*}E_i(x))^{\perp}}&=(\grad^{*}(\log  f_{i}^{-1}))_{(\spa\{X_i\}(\Phi(x)))^{\perp}}\nonumber\\
    &=(\Phi_{*}\grad(\log \circ f_{i}^{-1}\circ \Phi))_{(\Phi_{*}E_i(x))^{\perp}}.
\end{align}

Introduce coordinates $(\Tilde{x}_1, \ldots,\Tilde{x}_{p-\ell}, \ldots, \Tilde{x}_{n})$ on $\prod_{k=1}^{p-\ell+1}M_k$. Using \eqref{igual}, we have $$ \rho_i=r_i(\Tilde{x}_i)f_i^{-1}\circ \Phi,$$ for some smooth functions $r_i=r_i(\Tilde{x}_i)$ with $1\le i \le p-\ell.$ Defining ${x}_i=\int r_i(\Tilde{x}_i)d\Tilde{x}_i$,  with respect to the coordinates $(x_1, \ldots, x_{p-\ell}, \Tilde{x}_{p-\ell+1}, \ldots, \Tilde{x}_n)$ the metric \eqref{metric-twisted} can be written as  \begin{equation}\label{metric-twisted2}
    \langle\,,\,\rangle=\sum_{i=1}^{p-\ell}f_i^{-2}dx_i^{2}+\varphi^{2}\sum_{i,j=p-\ell+1}^{n}\Tilde{g}_{ij}d\Tilde{x}_id\Tilde{x}_j,
\end{equation} for some   $\varphi \in C^{\infty}(U)$, where $\Tilde{g}_{ij}$ are the coefficients of the metric $\langle\,,\,\rangle_{p-\ell+1}$ (here we omit $\Phi$ for the sake  of simplicity).

  Now we show that we can choose orthogonal coordinates on $M_{p-\ell+1}$ with respect to $\langle\, , \,\rangle_{p-\ell+1}$. Indeed,  for a fixed choice $\bar{x}:=(\bar{x}_1,\ldots, \bar{x}_{p-\ell})$ of the coordinates $(x_1, \ldots, x_{p-\ell})$, let $\tau_{p-\ell+1}^{\bar{x}}:M_{p-\ell+1}\to \prod_{k=1}^{p-\ell+1}M_k$ be defined by $\tau_{p-\ell+1}^{\bar{x}}(x_{p-\ell+1})=(\bar{x}_1, \ldots, \bar{x}_{p-\ell}, x_{p-\ell+1})$. Then $\langle {\tau_{p-\ell+1}^{\bar{x}}}_{*}v, {\tau_{p-\ell+1}^{\bar{x}}}_{*}w\rangle=\varphi_{\bar{x}}^{2}\langle v,w\rangle_{p-\ell+1}$ for all $x_{p-\ell+1}\in M_{p-\ell+1}$ and all $v,w\in T_{x_{p-\ell+1}}M_{p-\ell+1}$, where $\varphi_{\bar{x}}\colon M_{p-\ell+1} \to \mathbb{R}_{+}$ is given by $$\varphi_{\bar{x}}(x_{p-\ell+1})=\varphi(\bar{x}_1,\ldots,\bar{x}_{p-\ell},  x_{p-\ell+1}).$$ Thus $\tau_{p-\ell+1}^{\bar{x}}$ is a conformal diffeomorphism of $M_{p-\ell+1}$ onto the leaf $L(\bar{x}):={\tau_{p-\ell+1}^{\bar{x}}}(M_{p-\ell+1})$ of $\Delta$ with conformal factor $\varphi_{\bar{x}}$.

Since $L(\bar{x})$ is umbilical in $(M^{n}, \langle\,,\,\rangle^{*})$ with mean curvature vector field $\delta(\bar{x}, x_{p-\ell+1})=
    (\grad^{*}\log \varphi)_{\Delta^{\perp}}$, by the Gauss equation the metric $g_{\bar{x}}$  in $L(\bar{x})$ induced by $\langle\,,\,\rangle^{*}$ has  curvature \begin{align*}
    %c(\bar{x})&:=c+||\delta(\bar{x}, x_{p-\ell+1})||^{2}\\
   c(\bar{x})&=c+||(\grad^{*}\log \varphi)_{\Delta^{\perp}}||^{2}\\
    &=c+\sum_{i=1}^{p-\ell}(X_i(\log \varphi))^{2}\\
    &=c+\sum_{i=1}^{p-\ell}(f_i(\bar{x},x_{p-\ell+1}))^{2}(\frac{\partial \log \varphi}{\partial x_i})^{2}.
\end{align*} 

Since $\langle\,,\,\rangle^{*}$ has constant curvature, then   $\delta$ is parallel in the normal connection of the inclusion of $L(\bar{x})$ into  $(M^{n}, \langle\,,\,\rangle^{*})$. In particular, it has constant length, and hence, the metric ${\tau_{p-\ell+1}^{\bar{x}}}^{*}g_{\bar{x}}=\varphi^{2}_{\bar{x}}\langle\,,\,\rangle_{p-\ell+1}$ has constant curvature $c(\bar{x})$. In particular,  there exists local orthogonal coordinates $(x_{p-\ell+1}, \ldots, x_n)$ on $(M_{p-\ell+1}, {\tau^{\bar{x}}_{p-\ell+1}}^{*}g_{\bar{x}})$.  Since  ${\tau_{p-\ell+1}^{\bar{x}}}^{*}g_{\bar{x}}=\varphi^{2}_{\bar{x}}\langle\,,\,\rangle_{p-\ell+1}$,  $(x_{p-\ell+1}, \ldots, x_n)$ are also orthogonal coordinates on $(M_{p-\ell+1}, \langle\,,\,\rangle_{p-\ell+1})$. Write $$ \langle \,,\,\rangle_{p-\ell+1}=\sum_{i=p-\ell+1}^{n}V_i^{2}dx_i^{2},$$ with $V_i=\sqrt{\langle \frac{\partial}{\partial x_i}, \frac{\partial}{\partial x_i}\rangle_{p-\ell+1}}$ for $i \geq p-\ell+1.$ 
Then the metric \eqref{metric-twisted2} takes the form \begin{equation}\label{metric-moebius}
     \langle\,,\,\rangle^{*}=\sum_{i=1}^{p-\ell}f_{i}^{-2}dx_i^{2}+\varphi^{2}\sum_{i=p-\ell+1}^{n}V_i^{2}dx_i^{2}.
 \end{equation}

  Denote $$ h_{ij}=\frac{1}{v_i}\frac{\partial v_j}{\partial x_i}, \quad 1 \le i \ne j \le n,$$ with $v_i=f_i^{-1}$ for $1 \le i \le p-\ell$ and $v_k=\varphi V_k$ for $k \geq p-\ell+1.$
Then \begin{equation*}
    h_{ij}=\begin{cases}-\dfrac{f_i}{f_j^{2}}\dfrac{\partial f_j}{\partial x_i}, \qquad \qquad 1 \le i \ne j \le p-\ell,\vspace{5pt}\\
    f_iV_j\frac{\partial \varphi}{\partial x_i}, \qquad \qquad  1 \le i \le p-\ell \,\, \mbox{and} \,\, j \geq p-\ell+1, \vspace{5pt}\\
    -\dfrac{1}{\varphi V_i}f_{j}^{-2}\dfrac{\partial f_j}{\partial x_i}, \quad \quad i \geq p-\ell+1 \,\, \mbox{and} \,\, 1 \le j \le p-\ell,\\
    \frac{V_j}{V_i}\frac{\partial \log \varphi}{\partial x_i}+H_{ij}, \qquad i \ne j \geq p-\ell+1,\,\,\,\mbox{with}\,\,\,\, H_{ij}:=\frac{1}{V_i}\frac{\partial V_j}{\partial x_i}.
    \end{cases}
\end{equation*} 
%with $H_{ij}:=\frac{1}{V_i}\frac{\partial V_j}{\partial x_i}.$
It is well-known that the conditions for \eqref{metric-moebius} to have constant sectional curvature $c$ are \begin{equation*}
    \begin{cases}
    \dfrac{\partial h_{ij}}{\partial x_i}+\dfrac{\partial h_{ji}}{\partial x_j}+\sum_{k \ne i,j}h_{ki}h_{kj}+cv_iv_j=0,\\
    \dfrac{\partial h_{ik}}{\partial x_j}=h_{ij}h_{jk}, \quad 1 \le i \ne k \ne j \ne i \le n.
    \end{cases} 
\end{equation*}
Let us compute $\frac{\partial h_{ik}}{\partial x_j}=h_{ij}h_{jk}$ for $p-\ell+1 \le i \ne j \le n$ and $1 \le k \le p-\ell.$ We have \begin{align*}
    \frac{\partial h_{ik}}{\partial x_j}=\frac{1}{(\varphi V_i)^{2}}\frac{\partial (\varphi V_i)}{\partial x_j}f_k^{-2}\frac{\partial f_k}{\partial x_i}+\frac{2}{\varphi V_i}f_{k}^{-3}\frac{\partial f_k}{\partial x_j}\frac{\partial f_k}{\partial x_i}-\frac{1}{\varphi V_i}f_k^{-2}\frac{\partial^{2}f_k}{\partial x_j \partial x_i}
\end{align*} and \begin{align*}
    h_{ij}h_{jk}=(\frac{V_j}{V_i}\frac{\partial \log \varphi}{\partial x_i}+\frac{1}{V_i}\frac{\partial V_j}{\partial x_i})(-\frac{1}{\varphi V_j}f_{k}^{-2}\frac{\partial f_k}{\partial x_j})=-\frac{1}{\varphi V_i}\frac{\partial \log (\varphi V_j)}{\partial x_i}f_k^{-2}\frac{\partial f_k}{\partial x_j}.
\end{align*} 
Multiplying the previous expression by $\varphi V_if_k^{2}$, we obtain \begin{equation}\label{ikj} \frac{\partial^{2}f_k}{\partial x_j \partial x_i}=\frac{\partial \log (\varphi V_i)}{\partial x_j}\frac{\partial f_k}{\partial x_i}+\frac{\partial \log (\varphi V_j)}{\partial x_i}\frac{\partial f_k}{\partial x_j}+2f_k^{-1}\frac{\partial f_k}{\partial x_j}\frac{\partial f_k}{\partial x_i}.\end{equation}
Since $\sum_{k=1}^{p-\ell}f_{k}^{2}=1$, then \begin{equation}\label{cond1} \sum_{k=1}^{p-\ell}f_k\frac{\partial f_k}{\partial x_i}=0 \quad \mbox{and} \quad \sum_{k=1}^{p-\ell}f_k\frac{\partial^{2}f_k}{\partial x_j \partial x_i}=-\sum_{k=1}^{p-\ell}\frac{\partial f_k}{\partial x_j}\frac{\partial f_k}{\partial x_i}.\end{equation} It follows from \eqref{ikj} and \eqref{cond1} that \begin{equation}\label{cond2} \sum_{k=1}^{p-\ell}\frac{\partial f_k}{\partial x_j}\frac{\partial f_k}{\partial x_i}=0,\,\,\,i\ne j \geq p-\ell+1.\end{equation}

Define $ W=(f_1, \ldots, f_{p-\ell}) \quad \mbox{and} \quad U_i=\left(\frac{\partial f_1}{\partial x_i}, \ldots, \frac{\partial f_{p-\ell}}{\partial x_i}\right)$,  $p-\ell+1 \le i \le n.$ By Proposition \ref{prop-totally-geodesic},  $\Delta^{\perp}$ is totally geodesic with respect to $\langle\,,\,\rangle^{*}$ if and only if $ U_i=0$ for all $ p-\ell+1 \le i \le n.$ Equations \eqref{cond1} and \eqref{cond2} become \begin{align*}
    \langle W, U_i \rangle&=0, \quad p-\ell+1 \le i \le n, \vspace{12pt}\\
    \langle U_i,U_j\rangle&=0, \quad p-\ell+1 \le i \ne j \le n,
\end{align*} that is, $W, U_{p-\ell+1}, \ldots, U_n$ are $n-p+\ell+1$ orthogonal vectors in $\mathbb{R}^{p-\ell}.$

Since $n \geq 2p$ by assumption, then $n-p+\ell+1>p-\ell,$ that is, $n>2(p-\ell)-1$.  Thus, we can assume that $U_{j}=0$ for $n-p+\ell+1 \le j \le n.$  This means that  $f_1, \ldots, f_{p-\ell}$ do not depend on $x_j$ for $n-p+\ell+1 \le j \le n.$
Assume by contradiction that $\Delta^{\perp}$ is not totally geodesic with respect to $\langle \,,\,\rangle^{*}$. 
Then there exist $x \in M^{n}$ and $j \geq p-\ell+1$,  say,  $j=p-\ell+1$, such that $U_{p-\ell+1}(x)\ne 0$. Assume, without loss of generality, that $\frac{\partial f_1}{\partial x_{p-\ell+1}} \ne 0$ on a neighborhood of $x.$  Applying \eqref{ikj} for $j=p-\ell+1$, $k=1$ and $i \geq n-p+\ell+1$, we obtain $$ \frac{\partial \log (\varphi V_{p-\ell+1})}{\partial x_i}=0,$$ that is, $v_{p-\ell+1}:=\varphi V_{p-\ell+1}$ does not depend on $x_i$. We can write $$ \langle \,,\,\rangle^{*}=\sum_{i=1}^{p-\ell}f_i^{-2}dx_i^{2}+\varphi^{2}\sum_{i=p-\ell+1}^{n-p+\ell}V_i^{2}dx_i^{2}+(\varphi V_{p-\ell+1})^{2}V_{p-\ell+1}^{-2}\sum_{i=n-p+\ell+1}^{n}V_i^{2}dx_i^{2}.$$

Denote $g_1=\sum_{i=1}^{p-\ell}f_i^{-2}dx_i^{2}+\varphi^{2}\sum_{i=p-\ell+1}^{n-p+\ell}V_i^{2}dx_i^{2}$ and $g_2=V_{p-\ell+1}^{-2}\sum_{i=n-p+\ell+1}^{n}V_i^{2}dx_i^{2}$, so that $$ \langle \,,\,\rangle^{*}=g_1+v_{p-\ell+1}^{2}g_2.$$ 
The fact that $ \langle \,,\,\rangle^{*}$ has constant sectional curvature $c$ is well-known 
to be equivalent to the following conditions:
\begin{enumerate}
    \item[(a)]$g_1$ has constant curvature $c$.
    \item[(b)] $\Hess \, v_{p-\ell+1}+cv_{p-\ell+1} g_1=0$
    \item[(c)] $g_2$ has constant curvature $||\grad \, v_{p-\ell+1}||^{2}+c v_{p-\ell+1}^{2}$,
\end{enumerate} where $\Hess$ and $\grad $ are computed with respect to $g_1$. In particular,
\begin{equation}\label{pii}\Hess \, v_{p-\ell+1}(\partial_i, \partial_i)+cv_{p-\ell+1}v_i^{2}=0, \quad 1 \le i \le p-\ell.
\end{equation} 
We have
\begin{align*}
    \nabla_{\partial_i}\partial_i=\nabla_{\partial_i}(v_iX_i)&=\frac{\partial v_i}{\partial x_i}X_i+v_i\left(\sum_{\overset{j=1}{j \ne i}}^{n-p+\ell}\langle \nabla_{\partial_i}X_i, X_j\rangle X_j\right)\\
    %&=\frac{\partial v_i}{\partial x_i}X_i-v_i\sum_{\overset{j=1}{j \ne i}}^{n-p+\ell}%\langle X_i, \nabla_{\partial_i}X_j\rangle X_j\\
    &=\frac{\partial v_i}{\partial x_i}X_i-v_i\sum_{\overset{j=1}{j \ne i}}^{n-p+\ell}h_{ji}X_j\\
    &=\frac{\partial v_i}{\partial x_i}\frac{1}{v_i}\frac{\partial}{\partial x_i}-v_i\sum_{\overset{j=1}{j \ne i}}^{n-p+\ell}h_{ji}\frac{1}{v_j}\frac{\partial}{\partial x_j}.
\end{align*} Hence  \begin{align*}
    \Hess\, v_{p-\ell+1}(\partial_i, \partial_i)&=\frac{\partial}{\partial x_i}(v_ih_{i(p-\ell+1)})-(\nabla_{\partial_i}\partial_i)(v_{p-\ell+1})\\
    &=\frac{\partial v_i}{\partial x_i}h_{i(p-\ell+1)}+v_i\frac{\partial h_{i(p-\ell+1)}}{\partial x_i}-\frac{1}{v_i}\frac{\partial v_i}{\partial x_i}\frac{\partial v_{p-\ell+1}}{\partial x_i}+v_i\sum_{\overset{j=1}{j \ne i}}^{n-p+\ell}h_{ji}\frac{1}{v_j}\frac{\partial v_{p-\ell+1}}{\partial x_j}\\
    &=v_i\left(\frac{\partial h_{i(p-\ell+1)}}{\partial x_i}+\sum_{\overset{j=1}{j \ne i,p-\ell+1}}^{n-p+\ell}h_{ji}h_{j(p-\ell+1)}+h_{(p-\ell+1)i}\frac{1}{v_{p-\ell+1}}\frac{\partial v_{p-\ell+1}}{\partial x_{p-\ell+1}}\right).
\end{align*} Thus, \eqref{pii} is equivalent to \begin{equation}\label{piib}
    \frac{\partial h_{i(p-\ell+1)}}{\partial x_i}+\sum_{\overset{j=1}{j \ne i,p-\ell+1}}^{n-p+\ell}h_{ji}h_{j(p-\ell+1)}+h_{(p-\ell+1)i}\frac{1}{v_{p-\ell+1}}\frac{\partial v_{p-\ell+1}}{\partial x_{p-\ell+1}}+cv_{p-\ell+1}v_i=0.
\end{equation}
On the other hand, since $g_1=\sum_{i=1}^{n-p+\ell}v_i^{2}dx_i^{2}$ has constant sectional curvature $c$, then
$$\frac{\partial h_{i(p-\ell+1)}}{\partial x_i}
+\frac{\partial h_{(p-\ell+1)i}}
{\partial x_{(p-\ell+1)i}}+\sum_{\overset{j=1}{j \ne i,p-\ell+1}}^{n-p+\ell}h_{ji}h_{j(p-\ell+1)}+cv_iv_{p-\ell+1}=0.$$
Comparing with \eqref{piib} yields
 \begin{equation*}
    \frac{\partial h_{(p-\ell+1)i}}{\partial x_{p-\ell+1}}=h_{(p-\ell+1)i}\frac{1}{v_{p-\ell+1}}\frac{\partial v_{p-\ell+1}}{\partial x_{p-\ell+1}},\end{equation*}
    or equivalently,
 \begin{equation*} \frac{\partial}{\partial x_{p-\ell+1}}(h_{(p-\ell+1)i}/v_{p-\ell+1})=0.
\end{equation*}  Using that $h_{(p-\ell+1)i}=-\frac{1}{v_{p-\ell+1}}f_{i}^{-2}\frac{\partial f_{i}}{\partial x_{p-\ell+1}}$ for $1 \le i \le p-\ell$, we obtain  \begin{equation*}
%\label{p+1i}
    \frac{1}{v_{p-\ell+1}^{2}}f_i^{-2}\frac{\partial f_i}{\partial x_{p-\ell+1}}=\phi_i(x_1, \ldots, x_{p-\ell}, x_{p-\ell+2}, \ldots, x_{n-p+\ell}),
\end{equation*} for some function $\phi_i=\phi_i(x_1, \ldots, x_{p-\ell}, x_{p-\ell+2}, \ldots, x_{n-p+\ell}).$
In particular, for $i=1$, the preceding equation yields a contradiction, for   $\frac{\partial f_1}{\partial x_{p-\ell+1}}\ne 0 $  at $x$ by hypothesis, whereas its right hand side does not depend on $x_{p-\ell+1}$. 

   We conclude that $\Delta^{\perp}$ is totally geodesic with respect to $\langle\,,\,\rangle^{*}$, hence  spherical with respect to the metric induced by $f$ by Proposition \ref{totally-umbilical}, with $(\grad_{f} \log \rho)_{\Delta}$ as its mean curvature vector field.

Since $\Delta=E_{\eta}$, Theorem \ref{cor:dupinpcn} shows that $f$ is locally, up to a composition with a conformal transformation of $\mathbb{R}^{n+p}$, an immersion $f=\Theta \circ (g, \Id):M^{p-\ell}\times \mathbb{Q}^{n-p +\ell}_{-\Tilde{c}} \to \mathbb{R}^{n+p}$, where $g:M^{p-\ell}\to \mathbb{Q}^{2p-\ell}_{\Tilde{c}}$, $0 \le \ell \le p-2,$ is an isometric immersion with flat normal bundle and $\Theta=\Id:\mathbb{R}^{n+p}\to \mathbb{R}^{n+p}$ if $\Tilde{c}=0$ or $\Theta$ is as in Examples \ref{cilindro}-$(ii)$ and $(iii)$  if $\Tilde{c} \ne 0.$  Now, as $f$ is proper then so is $g$ (because, it follows from Proposition 5 of \cite{DOV} that the property of an isometric immersion with flat normal bundle being proper is invariant under conformal changes of the ambient metric), so then, $\nu^{g}$ vanishes identically  on $M^{n}$, otherwise, if $\nu^{g}(y) \geq 1$ for some $y \in M^{n}$ then $g$ would have at least one of the $p-\ell$ principal normal vector fields vanishing, which implies $\eta$ would have multiplicity strictly greater than $n-p+\ell$, what it is a contradiction.  Since $f$ has constant Moebius curvature $c$, the conclusion of this theorem is a consequence of Lemma \ref{moebius-constante} and Corollary \ref{meancurvature}.
\end{proof}

\begin{Remarks} $(i)$ \emph{Theorem \eqref{theorem-main} together with Theorem \eqref{theorem-main-2} provide a complete classification of all isometric immersions $f:M^{n}\to \mathbb{R}^{n+p}$, $n\geq 2p$ and $n \geq 5$, with constant Moebius curvature and flat normal bundle.\vspace{1ex}\\
$(ii)$     The proof of Theorem \ref{theorem-main}  also works for isometric immersions $f:M^{4}\to \mathbb{R}^{6}$ with constant Moebius curvature and flat normal bundle that have a principal normal vector field $\eta$ of multiplicity $2$ (observe that the case in which $\eta$ has multiplicity $3$ is included in Theorem \ref{theorem-main-2}).  Therefore, for the classification of all submanifolds $f:M^{n}\to \mathbb{R}^{n+2}$, $n \geq 4,$ with constant Moebius curvature and flat normal bundle, it remains to study the case in which there are   four distinct principal normal vector fields. }    
\end{Remarks}

\noindent Universidade de S\~ao Paulo\\
Instituto de Ci\^encias Matem\'aticas e de Computa\c c\~ao.\\
Av. Trabalhador S\~ao Carlense 400\\
13566-590 -- S\~ao Carlos\\
BRAZIL\\
\texttt{mateusrodrigues@alumni.usp.br} and \texttt{tojeiro@icmc.usp.br}

\end{document}